\newtheorem{theorem}{Theorem}[section]
\newtheorem{proposition}[theorem]{Proposition}
\newtheorem{definition}[theorem]{Definition}
\newtheorem{lemma}[theorem]{Lemma}
\newtheorem{remark}[theorem]{Remark}
\newtheorem{corollary}[theorem]{Corollary}
\numberwithin{equation}{section} 
\DeclareMathOperator*{\esssup}{ess\,sup}
\DeclareMathOperator*{\essinf}{ess\,inf}
\DeclareMathOperator*{\R}{\mathbb{R}}
\def \om {\omega}
\def \0TR{[0,T]\times\mathbb{R}^k}
\def \otr {[0,T)\times\mathbb{R}^k}
\def \tx{t,x}
\def \S2{\mathcal{S}^2}
\def \H2d{\mathcal{H}^{2,d}}
\def \A2{\mathcal{A}^2}
\def \lG{l\in \Gamma}
\def \ms{\vspace*{3 ex}}
\def \iplus1{ i,i+1}
\def \ig{i\in\Gamma}
\def \p {\mathbb{P}}
\def \mx{\mbox}
\def \ind{{\bf 1}}
\def \rp {\mathbb{R}^p}
\def \G {\Gamma}
\def \sd {\mathcal{S}^2}
\def \hd {\mathcal{H}^{2,d}_{loc}}
\def \ad {\mathcal{A}_{loc}}
 \def \ox {0,x}
 \def \s {\sigma}
 \def \t {\tau}
 \def \G {\Gamma}
 \def \lb {\label}
\def \nd {\noindent}
\def \tuv{\Theta(u,v)}
\def \ca {\mathcal{A}}
\def \cb {\mathcal{B}}
\def \cai {\mathcal{A}^{(1)}}
\def \cbi {\mathcal{B}^{(1)}}
\def \mba  {\mathbb{A}}
\def \mbb {\mathbb{B}}
\def \mbaun  {\mathbb{A}^{(1)}}
\def\txsp{(t,x)\in [0,T]\times \mathbb{R}^k}
\def \fr{\forall}
\def \nn{\nonumber}
\def \rwi{\rightarrow \infty}
\def \E{\mathbb{E}}
\def \qq{\qquad}
\def \hdd {\mathcal{H}^{2,d}} 
\def \rw {\rightarrow}
\def \frig{\fr \ig}
\def \frst{\fr s\in [t,T]}
\def \xtx {X^{\tx}}
\def \tT {[0,T]}
\def \stt {s\in [t,T]}
\def \g {\gamma}
\def \cH {\mathcal{H}}
\def \cS {\mathcal{S}}
\def \yphi {Y^{\phi,i}}
\def \zphi {Z^{\phi,i}}
\def \kphip {K^{\phi,i,+}}
\def \kphim {K^{\phi,i,-}}
\def \dis {\displaystyle}
\def \ypsi {Y^{\psi,i}}
\def \zpsi {Z^{\psi,i}}
\def \cBone {\mathcal{B}^{(1)}}
\def \cAone {\mathcal{A}^{(1)}}
\def \jphi {J^{\phi}(\Theta(u,v))_s}
\def \jpsi {J^{\psi}(\Theta(u,v))_s}
\def \jphir {J^{\phi}(\Theta(u,v))_r}
\def \jpsir {J^{\psi}(\Theta(u,v))_r}
\def \tuv {\theta(u,v)}
\def \cF {\mathcal{F}}
\def \kg {k\in \G}
\def \d {\delta}
\def \lb{\label}
\def \hz { (Z^i)_{\ig}\in \mathcal{H}^{2,d}}
\def \z {(Z^i)_{\ig}}
\begin{document}
\begin{titlepage}
\title{Zero-sum Switching Game, Systems of Reflected Backward SDEs and Parabolic PDEs with bilateral interconnected obstacles}
\author{Said {\sc Hamadène}\footnote{e-mail:  {hamadene@univ-lemans.fr}}, Tingshu {\sc Mu}\footnote{e-mail:  {tingshu.mu.etu@univ-lemans.fr}}\\
Le Mans University\\ Laboratoire  Manceau de Mathématiques \\Avenue Olivier Messaen, 72085 Le Mans Cedex 9, France \\
}

\end{titlepage}
\maketitle

\begin{abstract}
In this paper we study a zero-sum switching game and its verification theorems expressed in terms of either a system of Reflected Backward Stochastic Differential Equations (RBSDEs in short) with bilateral interconnected obstacles or a system of parabolic partial differential equations (PDEs in short) with bilateral interconnected obstacles as well. We show that each one of the systems has a unique solution. Then we show that the game has a value. 
\end{abstract}

\nd \textbf{Keywords}: Zero-sum switching game; System of PDEs;  HJB equations; Bilateral obstacles; Viscosity solution; Reflected Backward SDEs; Perron's method.
\ms

\nd {\bf AMS Classification subjects}: 49L25; 91A15; 35F21.
\section{Introduction}
This paper is related to zero-sum switching games, systems of reflected backward differential equations (RBSDEs) with bilateral interconnected obstacles and systems of variational inequalities of min-max type with interconnected obstacles, namely the Hamilton-Jacobi-Bellman (HJB for short) system associated with the game. \\

\nd First let us describe the zero-sum switching game which we will consider in this paper.  Let $\G$ be the set $\{1,...,p\}$. Assume we have a system which has $p$ working modes indexed by $\G$. This system can be switched from one working mode to another one, e.g. due to economic, financial, ecological puposes, etc, by two players or decision makers $C_1$ and $C_2$. The main feature of the switching actions is that when the system is in mode $\ig$, and one of the players decides to switch it, then it is switched to mode $i+1$ (hereafter $i+1$ is $1$ if $i=p$). It means that the decision makers do not have their proper modes to which they can switch the system when they decide to switch (see e.g. \cite{hmm} for more details on this model). Therefore a switching strategy for the players are sequences of stopping times $u=(\s_n)_{n\ge 0}$ for $C_1$ and 
$v=(\t_n)_{n\ge 0}$ for $C_2$ such that $\s_n\le \s_{n+1}$ and $\t_n\le \t_{n+1}$ for any $n\ge 0$. On the other hand, the switching actions are not free and generate expenditures for the players. Loosely speaking at time $t\le T$, they amount to $A^u_t$ (resp. $B^v_t$) given by:
$$
A^u_t=\sum_{\s_n\le t}\underline{g}_{\zeta_n,\zeta_n+1}(\s_n) \,\,(\mbox{resp. }B^v_t=\sum_{\t_n\le t}\bar{g}_{\theta_n,\theta_n+1}(\t_n)).
$$
The process $\underline{g}_{i,i+1}(s)$ (resp. $\bar {g}_{i,i+1}(s)$) is the switching cost payed by $C_1$ (resp. $C_2$) is she makes the decision to switch the system from mode $i$ to mode $i+1$ at time $s$ while $\zeta_n$ (resp. $\theta_n$) is the mode in which the system is at time $\sigma_n$ (resp. $\t_n$). Next when the system is run under the control $u$ (resp. $v$) for $C_1$ (resp. $C_2$), there is a payoff $J(u,v)$ which is a profit (resp. cost) for $C_1$ (resp. $C_2$) given by: 
$$\begin{array}{l}
J(u,v)=\E[\int_0^Tf^{\delta_s}(s)ds-A^u_T+B^v_T+\zeta^{\d_T}].
\end{array}
$$
where $\delta:=(\d_s)_{s\le T}$ is the process valued in $\G$ which indicates the working modes of the system along with time. If at time $s$ the system is in mode $i_0$, then $\d_s=i_0$. It is bind to the controls $u$ and $v$ implemented by both players. On the other hand, for $\ig$, the process $f^i$ is the utility of the system in mode $i$ and finally $\zeta^{\d_T}$ is the terminal payoff or bequest.

The problem we are interested in is to know whether or not the game has a value, i.e., roughly speaking, if the following equality holds:
$$
\inf_v\sup_u J(u,v)=\sup_u\inf_v J(u,v)\,\,
$$
 In case of equality we say that the game has a value. Finally we say that the game has a saddle-point $(u^*,v^*)$ if, for any $u$ and $v$, controls of $C_1$ and $C_2$ respectively, we have:
$$
J(u,v^*)\le J(u^*,v^*)\le J(u^*,v).
$$
Note that in such a case, the game has a value.
\ms

\nd From the  probabilistic point of view, this zero-sum switching game problem turns into looking for a solution of its associated system of reflected BSDEs with interconnected bilateral obstacles (see e.g. \cite{hmm} for the case of proper modes of players). A solution for such a system are adapted processes $(Y^i,Z^i,K^{i,\pm})_{i\in \G}$ such that for any $\ig$, and $s\le T$, 
\begin{equation}\label{yigameintro}
\left\lbrace
\begin{array}{l}
Y^i \mx{ and } K^{i,\pm} \mx{ continuous};  
K^{i,\pm} \mx{ increasing}; (Z^i(\omega)_t)_{t\le T}\mx { is } dt-\mx{ square integrable}; \\ 
Y_s^i=\zeta^i+\int_s^Tf^i(r)dr-\int_s^TZ^i_rdB_r+K_T^{i,+}-K_s^{i,+}-(K_T^{i,-}-K_s^{i,-});\\
L^i(\vec Y)_s\leq Y_s^i\leq U^i(\vec Y)_s;\\
\int_0^T (Y^i_s-L^i(\vec Y)_s)dK_s^{i,+}=0\;\mbox{and} \; \int_0^T
(Y^i_s-U^i(\vec Y)_s)dK_s^{i,-}=0 
\end{array}
\right.
\end{equation}
where: a) $B:=(B_t)_{t\le T}$ is a Brownian motion; b) $\vec Y:=(Y^i)_{\ig}$; c) $L^i(\vec Y)_s=Y^{i+1}_s-\underline{g}_{i,i+1}(s)$ and $U^i(\vec Y)_s=Y^{i+1}_s+\bar {g}_{i,i+1}(s)$. 

Actually the solution of the previous system provides the value of the zero-sum switching game which is equal to $Y^i_0$ if the starting mode of the system is $i$. Roughly speaking, system \eqref{yigameintro} is the verification theorem for the zero-sum switching game problem. 

In the Markovian framework, i.e., when randomness stems from a diffusion process $X^{\tx}$ ($\txsp$) which satifies:
\begin{equation}\label{EDS_3intro}
\begin{array}{l}
dX^{\tx}_s=b(s,X^{\tx}_s)ds+\sigma(s,X^{\tx}_s)dB_s,\,\,s\in [t,,T] \mx{ and }X^{\tx}_s=x \mx{ for }s\leq t
\end{array}
\end{equation}and the data of the game are deterministic functions of $(s,X^{\tx}_s)$, the Hamilon-Jacobi-Bellman system associated with this switching game is
the following system of partial differential equations (PDEs in short) with a bilateral interconnected obstacles:
$\forall i \in \Gamma$, $\fr (\tx)\in\0TR,$
\begin{equation}\label{sysedpintro}
\left\lbrace
\begin{array}{l}
\min\{v^i(t,x)-L^i(\vec{v})(t,x);\max\left[v^i(t,x)-U^i(\vec{v})(t,x);\right.
\left.-\partial_tv^i(t,x)-\mathcal{L}^X(v^i)(t,x)-f^i(t,x)\right]\}=0;\\\\
v^i(T,x)=h^i(x).
\end{array}
\right.
\end{equation}
where: a) 
$\vec{v}=(v^i)_{\ig}$; b) $L^i(\vec{v})(t,x):=v^{i+1}(t,x)-\underline{g}_{\iplus1}(t,x), U^i(\vec{v})(t,x):=v^{i+1}(t,x)+\overline{g}_{\iplus1}(t,x)$; c) $\mathcal{L}^X$, the infinitesimal generator  of $X$, is given by: $\forall \phi \in {\cal C}^{1,2}(\0TR)$,
\begin{align*}
\begin{split}
\mathcal{L}^X\phi(t,x):=\dfrac{1}{2}\mx{Tr}[\sigma\sigma^\top(t,x)D^2_{xx}\phi(t,x)]+b(t,x)^\top D_x\phi(t,x).
\end{split}
\end{align*}Usually it is shown that the value functions of the game is a unique solution of \eqref{sysedpintro}.
\medskip

This work is originated by an article by N.Yamada \cite{Y83} where the author deals with the system of PDEs \eqref{sysedpintro} in the case when the switching costs are constant and for bounded domains $\bar\Omega$. By penalization method, the author proved existence and uniqueness of the solution of \eqref{sysedpintro} in a weak sense (actually in a Sobolev space). Then he gives an interpretation of the solution of this system as a value function of the zero-sum switching game described previously. A saddle-point of the game is also given. However neither this interpretation nor the existence of the saddle-point are clear because the question of admissiblity of the controls which are supposed to realize the saddle-point property is not addressed. In zero-sum switching games this issue of admissibility of those controls, defined implicitely through 
$(Y^i)_{\ig}$, is crucial (see e.g. \cite{hmm}). Note also that there is another paper by 
N.Yamada \cite{YN87} where the solution of system \eqref{sysedpintro} is considered in viscosity sense. Once more by penalization, he shows existence and uniqueness of the solution in bounded domains $\bar\Omega$.

Therefore the main objectif of this work is to show that: 

\nd i) the system of reflected BSDEs with interconnected obstacles \eqref{yigameintro} 
has a unique solution in  the Markovian framework. 

\nd ii) the zero-sum switching game described above has a value in different settings. 

\nd iii) The system of PDEs \eqref{sysedpintro} has a unique solution. 
\ms

Actually in this paper we show that system of PDEs \eqref{sysedpintro} has a unique continuous with polynomial growth solution $(v^i)_{\ig}$ in viscosity sense on $\0TR$. Mainly this solution is constructed by using Perron's method in combination with systems of reflected BSDEs with one lower interconnected obstacle and the Feynman-Kac representation of their solutions in the Markovian framework. Then we show that the following system of RBSDEs with interconnected bilateral obstacles has a unique solution: For any $\ig$ and $\stt$, 
\begin{equation}\label{yigameintro2}
\left\lbrace
\begin{array}{l}
Y^i \mx{ and } K^{i,\pm} \mx{ are continuous};  
K^{i,\pm} \mx{ are increasing }(K^{i,\pm}_t=0); (Z^i(\omega)_t)_{t\le T}\mx { is } dt-\mx{ square integrable}; \\ 
Y_s^i=h^i(X^{\tx}_T)+\int_s^Tf^i(r,X^{\tx}_r)dr-\int_s^TZ^i_rdB_r+K_T^{i,+}-K_s^{i,+}-(K_T^{i,-}-K_s^{i,-});\\
L^i(\vec Y)_s\leq Y_s^i\leq U^i(\vec Y)_s;\\
\int_t^T (Y^i_s-L^i(\vec Y)_s)dK_s^{i,+}=0\;\mbox{and} \; \int_t^T
(Y^i_s-U^i(\vec Y)_s)dK_s^{i,-}=0 
\end{array}
\right.
\end{equation}
where $X^{\tx}$ is the Markov process solution of \eqref{EDS_3intro}, $L^i(\vec Y)_s=Y^{i+1}_s-\underline{g}_{i,i+1}(s,X^{\tx}_s)$ and $U^i(\vec Y)_s=Y^{i+1}_s+\bar {g}_{i,i+1}(s,X^{\tx}_s)$. 
\ms

\nd Finally we consider the zero-sum switching game and we show that when the processes $Z^i$, $\ig$, of \eqref{yigameintro2} are:
\ms

\nd a) $dt\otimes d\p$-square integrable then $Y^i_0$ is the value of the game under square integrable controls, i.e., $\E[(A^u_T)^2+(B^v_T)^2]<\infty$.
\ms

\nd b)  only $\omega$ by $\om$, $dt$-square integrable then $Y^i_0$ is the value of the game under integrable controls, i.e., $\E[A^u_T+B^v_T]<\infty$.
\ms

\nd The paper is organized as follows:
\ms

\nd In Section 2, we introduce the zero-sum switching game and especially the notion of coupling which is already used in several papers including \cite{hmm,stet}. In Section 3, we show that the solution of \eqref{yigameintro2} is the value of the zero-sum switching game over square integrable controls when $Z^i$, $\ig$, are $dt\otimes d\p$-square integrable. Without additional assumptions on the data of the problem, this property is rather tough to check in practice because it depends on the room between the barriers $L^i(\vec Y)$  and $U^i(\vec Y)$ which depend on the solution $\vec Y$. For example, it is not clear how to assume an hypothesis like Mokobodski's one (see e.g. \cite{CK96,shjpl}) since the barriers depend on the solution and this latter is not explicit. However by localiztion, we can show that in some cases, e.g. when the switching costs are constant, $Y^i_0$ is actually the value function over square integrable controls even when we do not know that $Z^i$, $\ig$, 
are $dt\otimes d\p$-square integrable. In the case when for any $\ig$ and $\p$-a.s. $(Z^i_s(\om))_{s\le T}$ is $dt$-square integrable only, which is the minimum condition to define the stochastic integral, $Y_0^i$ is the value function of the zero-sum switching game over integrable controls. To show this property we proceed by localization. 
Section 4 is devoted to existence and uniqueness of the solution of system of PDEs \eqref{sysedpintro} in a more general form. The result is given in Theorem \ref{thm43}, but the main steps of its proof are postponed to Appendix. This proof 
is based on Perron's method and the construction of this solution (more or less the same as in \cite{DHMZ17})  proceeds as follows: a) we first introduce the processes $(Y^{i,m},Z^{i,m},K^{\pm,i,m})_{\ig}$, $m\ge 1$, solution of the system of reflected BSDEs with interconnected lower barriers associated with $\{
{f}^{i}(r,X_r^{\tx},\vec y, z^i)-m(y^i-y^{i+1}-\bar g_{i,i+1}(r,X_r^{\tx}))^+,h^i(X_T^{\tx}),\underline g_{i,i+1}(r,X_r^{\tx})\}_{\ig}$ (see \eqref{ym}). It is a 
decreasing penalization scheme. As the framework is Markovian then there exist deterministic functions continuous and of polynomial growth $(v^{i,m})_{\ig}$ such that the following Feynman-Kac representation holds: For any $\ig$, $m\ge 1$ and $\stt$,
$$
Y^{i,m}_s=v^{i,m}(s,X_s^{\tx}). 
$$
As for any $\ig$, $m\ge 1$, $Y^{i,m}\ge Y^{i,m+1}$ then we have also $v^{i,m}\ge v^{i,m+1}$. Now if we define $v^i=\lim_{m}v^{i,m}$, then $(v^i)_{\ig}$ is a subsolution of \eqref{sysedpintro} and for any fixed $m_0$, $(v^{i,m_0})_{\ig}$ is a supersolution of \eqref{sysedpintro}. Next it is enough to use Perron's method to show that \eqref{sysedpintro} has a unique solution since comparison principle holds. Finally, by uniqueness this solution does not depend on $m_0$ and is $(v^{i})_{\ig}$. Additionally for any $\ig$, $v^i$ is of polynomial growth and continuous. In Section 5, we show existence and uniqueness of the solution of system of RBSDEs \eqref{yigameintro}  and give some extensions. This proof is based on results on zero-sum Dynkin games and standard two barriers reflected BSDEs. The component $Y^i$, $\ig$, is just the limit of the processes $(Y^{i,m})_m$. We make use of the fact that, by Dini's Theorem, $(v^{i,m})_m$ converges to $v^i$ uniformly on compact sets  since $v^i$ is continuous and then the sequence $(Y^{i,m})_m$ converges uniformly in $L^2(d\p)$ to $Y^i$, $\ig$. As mentionned previously, this latter property stems from the PDE part. Note also that the following representation holds:
$$
\frst, Y^{i}_s=v^i(s,X_s^{\tx}).
$$
Here we should point out that since the switching of the system is made from $i$ to $i+1$ and the players do not have their proper sets of switching modes, then the method used e.g. in \cite{hmm} cannot be applied in our framework. As a consequence of this fact, the question of a solution of \eqref{yigameintro} outside the Markovian framework still open. At the end of the paper there is the Appendix. \qed
\section{Preliminaries. Setting of the stochastic switching game}
Let $T$ be a fixed positive constant. Let $(\Omega,\mathcal{F},\mathbb{P})$
denote a complete probability space, $B=(B_t)_{t\in[0,T]}$ a \textit{d-}dimensional
Brownian motion whose natural filtration is $(\mathcal{F}^0_t:=\sigma\lbrace{B_s,s\leq t
\rbrace})_{0\leq t\leq T}$ and we denote by $\mathbb{F}=(\mathcal{F}_{t})_{0\leq t\leq T}$
the completed filtration of $(\mathcal{F}^0_{t})_{0\leq t\leq T}$ with the $\mathbb{P}$-null
sets of $\mathcal{F}$. Then it satisfies the usual conditions. On the other hand, let $\mathcal{P}$ be the $\sigma$-algebra on $[0,T]\times \Omega$ of the $\mathbb{F}$-progressively
measurable sets.\\

\noindent Next, we denote by:
\begin{itemize}
\item[-]
$\mathcal{S}^2$: the set of $\mathcal{P}$-measurable continuous processes $\phi=(\phi_t)_{t \in [0,T]}$ such that $\mathbb{E}(\sup_{t \in [0,T]} |\phi_t|^2)  < \infty$;
\item[-]
$\mathcal{A}^2$ : the subset of $\mathcal{S}^2$ with all non-decreasing processes $K=(K_t)_{t\leq T}$ with $K_0=0$;

\item[-] $\mathcal{A}_{loc}$: the set of $\mathcal{P}$-measurable continuous
non-decreasing processes $K=(K_t)_{t\leq T}$ with $K_0=0$ such that
$\p-a.s.\,\, K_T(\omega)<\infty$;

\item[-] $\mathcal{H}^{2,d}_{loc}(d\geq 1):$
the set of $\mathcal{P}$-measurable $\mathbb{R}^d$-valued processes
$\psi=(\psi_t)_{t \in [0,T]}$ such that $\p-a.s.$, $\int^T_0
|\psi_t|^2dt< \infty$.

\item[-]
$\mathcal{H}^{2,d}$: the subset of 
$\mathcal{H}^{2,d}_{loc}(d\geq 1)$ of processes $\psi=(\psi_t)_{t \in [0,T]}$ such
that $\mathbb{E}(\int^T_0 |\psi_t|^2dt)  < \infty$.
\item[-] For $s\le T$, $\mathcal{T}_{s}$ is the set of stopping times $\nu$ such that $\p$-a.s., $s\le \nu \le T$.

\item[-] $\mathcal{S}^2([t,T])$ is the set $\mathcal{S}^2$ reduced to the time interval $[t,T]$. The same meaning is valid for the other spaces introduced above.\qed 
\end{itemize}

\noindent Now for any $(t,x)\in [0,T]\times \mathbb{R}^k$, let us
consider the process $(X_s^{\tx})_{s\in[t,T]}$ solution of the following standard SDEs:
\begin{equation}\label{EDS_3}\left\{
\begin{array}{l}
dX_s^{t,x}=b(s,X_s^{t,x})ds+\sigma(s,X_s^{t,x})dB_s,\,\,s\in [t,T];\\\\
X_s^{t,x}=x, \,\, s\leq t
\end{array}\right.
\end{equation}
where, throughout this paper, $b$ and $\sigma$ satisfy the following conditions:
\begin{itemize}
\item[\textbf{[H0]}]
The functions $b$ and $\sigma$ are Lipschitz continuous w.r.t. $x$ uniformly in $t$, i.e. for any $(t,x,x') \in [0,T]\times \mathbb{R}^{k+k}$, there exists a non-negative constant $C$ such that
\begin{equation}\label{EDS_1}
 \vert \sigma(t,x)-\sigma(t,x')\vert +\vert b(t,x)-b(t,x')\vert \leq C\vert x-x' \vert.
\end{equation}
Moreover we assume that they are jointly continuous in $(t,x)$. The continuity of $b$ and $\sigma$ imply their linear
growth w.r.t. $x$, i.e. there exists a constant $C$ such that for any 
$(t,x) \in [0,T]\times \mathbb{R}^k,$
\begin{equation}\label{EDS_2}
\vert b(t,x) \vert +\vert \sigma(t,x)\vert \leq C(1+\vert x\vert).\qed
\end{equation}
\end{itemize}
Therefore under assumption [H0], the SDE \eqref{EDS_3} has a unique solution $X^{t,x}$ which satisfies the following
estimates: $\forall \gamma\geq 1,$
\begin{equation}\label{State_3}
\mathbb{E}[\sup_{s\leq T} \vert X_s^{t,x}\vert^\gamma]\leq C(1+\vert x\vert^\gamma).\qed
\end{equation}

Next a  function $\Phi: (t,x)\in [0,T]\times \mathbb{R}^k \mapsto
\Phi(t,x)\in \mathbb{R} $ is called of polynomial growth if there
exist two non-negative real constants $C$ and $\gamma$ such that
\begin{equation*}\forall (t,x)\in [0,T]\times \mathbb{R}^k,
\vert \Phi(t,x)\vert \leq C(1+\vert x\vert^\gamma).
\end{equation*}
Hereafter this class of functions is denoted by $\Pi_g$. 
\subsection{Description of the zero-sum stochastic switching game}
Let $\Gamma:=\lbrace1,2,...,p\rbrace$ and for $i\in \Gamma$,
let us set $\Gamma^{-i}:=\Gamma-\{i\}$. For
$\vec{y}:=(y^i)_{i \in \Gamma} \in \mathbb{R}^p$ and
$\hat{y}\in \R$, we denote by $[\vec{y}_{-i},\hat{y}]$
or $[(y^k)_{k\in\Gamma^{-i}},\hat y]$, the element of $\rp$ obtained  in replacing the $i$-th component of $\vec{y}$ with $\hat{y}$. \\

\noindent We now introduce the following deterministic functions:
for any $i \in \Gamma$,
\begin{itemize}
\item[-]
$f^i$: $(t,x,\vec{y},z) \in [0,T] \times \mathbb{R}^{k+p+d} \mapsto f^i(t,x,\vec{y},z) \in \mathbb{R}$
\item[-]
$\underline{g}_{i,i+1}$: $(t,x) \in [0,T]\times\mathbb{R}^k \mapsto \underline{g}_{i,i+1}(t,x) \in \mathbb{R}$
\item[-]
$\overline{g}_{i,i+1}$: $(t,x) \in [0,T]\times\mathbb{R}^k  \mapsto \overline{g}_{i,i+1}(t,x) \in \mathbb{R}$
\item[-]
$h^i: x\in \mathbb{R}^k \mapsto h^i(x)\in \mathbb{R}$
\end{itemize}

\noindent Next let us consider a system with $p$ working modes
indexed by the set $\Gamma$. On the other hand, there are two agents
or controllers $C_1$ and $C_2$, whose interests or profits are antagonistic and who act on this system, along with
time, by switching its working mode from the current one, say $i_0$, to the next one $i_0+1$ if $i_0\le p-1$ and $1$ if $i_0=p$, whatever which agent decides to switch first. Therefore a switching control for
$C_1$ (resp. $C_2$) is $u:=(\sigma_n)_{n \geq
0}$ (resp. $v:=(\tau_n)_{n \geq 0}$) an
increasing sequence of stopping times which correspond to the
successive times where $C_1$ (resp. $C_2$) decides to switch the
system. The control $u$ (resp. $v$) is called $admissible$ if 
\begin{equation}\lb{adm1}\p[\sigma_n<T, \forall n\ge 0]=0 \,\,  (\mx{resp.}\,\, \p[\tau_n<T, \forall
n\ge 0]=0).\end{equation}
The set of admissible controls of $C_1$ (resp. $C_2$) is denoted by $\bf{A}$ (resp. $\bf{B}$). 

Now let $u:=(\sigma_n)_{n \geq 0}$ (resp.
$v:=(\tau_n)_{n \geq 0}$) be an admissible control of $C_1$
(resp. $C_2$). Let $(r_n)_{n\geq 0}$ and $(s_n)_{n\ge 0}$ be the
sequences defined by: $r_0=s_0=0$, $r_1=s_1=1$ and for $n\ge 2$,
$$
  r_n=r_{n-1}+\ind_{\{\sigma_{r_{n-1}}\le \tau_{s_{n-1}}\}}  \mx{ and
  }
  s_n=s_{n-1}+\ind_{\{\tau_{s_{n-1}}< \sigma_{r_{n-1}}\}}.
$$
For $n\ge 0$, let us set $\rho_n=\sigma_{r_n}\wedge
\tau_{s_n}$. It is a stopping time and it stands for the time when
the $n$-th switching of the system, by one of the players, occurs. On the other hand, the piecewise process $(\theta(u,v)_s)_{s\le T}$ which indicates in which mode the system is at time $s$ is given by: 
$\forall s\le T$,
$$
  \theta(u,v)_s=\theta_0 \ind_{[\rho_0,\rho_1]}(s)+\sum_{n\ge 1}\theta_n\ind_{(\rho_n,\rho_{n+1}]}(s)
                                      $$
where:

i) $(\rho_n,\rho_{n+1}]=\varnothing$ on $\{\rho_n=\rho_{n+1}\}$ ;

ii) $\theta_0=i$ if at $t=0$, the system is in mode $i$ ;

iii) For $n\ge 1$, $\theta_n=\theta_{n-1}+1$ if $\theta_{n-1}\le p-1$ and $\theta_n=1$ if $\theta_{n-1}=p$.

\nd The sequence $\Theta(u,v):=(\rho_n,\theta_n)_{n\ge 0}$, called the $coupling$ of $(u,v)$, indicates the successive times and modes of switching of the system operated by the players. 
\ms

When the players implement the pair of admissible controls $(u,v)$, this incurs switching costs which amount to $A^u_T$ and $B^v_T$, for $C_1$ and $C_2$ respectively, and given by:
$$ \forall s<T, A^u_s=\sum_{n\ge 1}\underline g_{\theta_{n-1}\theta_n}(\rho_n,X^{0,x}_{\rho_n})\ind_{\{\rho_n=\sigma_{r_n}\le s\}} \mx{ and } A^u_T=\lim_{s\rightarrow T}A^u_s;$$ $$
\forall s<T, B^v_s=\sum_{n\ge 1}\overline g_{\theta_{n-1}\theta_n}(\rho_n,X^{0,x}_{\rho_n})\ind_{\{\rho_n=\t_{s_n}\le s\}} \mx{ and } B^v_T=\lim_{s\rightarrow T}B^v_s.$$
The admissible control $u$ (resp. $v$) of $C_1$ (resp. $C_2$) is called {\it square integrable }if $$
\mathbb{E}[(A^u_T)^2]<\infty\,\, (\mx{resp. } \mathbb{E}[(B^v_T)^2]<\infty).
$$
The set of square integrable admissible controls of $C_1$ (resp. $C_2$) is denoted by $\ca$ (resp.  $\cb$). 
\ms

\nd The admissible control $u$ (resp. $v$) of $C_1$ (resp. $C_2$) is called {\it integrable }if $$
\mathbb{E}[A^u_T]<\infty\,\, (\mx{resp. } \mathbb{E}[B^v_T]<\infty).
$$
The set of integrable admissible controls of $C_1$ (resp. $C_2$) is denoted by $\ca^{(1)}$ (resp.  $\cb^{(1)}$). 
\ms

\nd The coupling $\tuv$, of a pair $(u,v)$ of admissible controls, is called {\it square integrable} (resp. {\it integrable}) if 
$$C^{\theta(u,v)}_\infty:=\lim_{n\rightarrow \infty}C^{u,v}_N \in L^2(d\p) \,\,(\mx{resp.} \in L^1(d\p) )$$
where for any $N\ge 1$, 
$$
C^{\theta(u,v)}_N:=\sum_{n=1,N}\underline g_{\theta_{n-1}\theta_n}(\rho_n,X^{0,x}_{\rho_n})\ind_{\{\rho_n=\sigma_{r_n}<T\}}- \sum_{n=1,N}\overline g_{\theta_{n-1}\theta_n}(\rho_n,X^{0,x}_{\rho_n})\ind_{\{\rho_n=\t_{s_n}<T\}} .
$$
Note that $C^{\theta(u,v)}_\infty$, defined as the pointwise limit of $C^{\theta(u,v)}_N$, exists since the controls $u$ and $v$ are admissible. On the other hand, the quantity $C^{\theta(u,v)}_N$ is nothing but the switching costs associated with the $N$ first switching actions of both players. 
\ms

Next when $C_1$ (resp. $C_2$) implements $u\in \bf{A}$ (resp. $v\in \bf{B}$), there is a payoff which a is reward for $C_1$ and a cost for $C_2$ which is given by (we suppose that $\theta_0=i$):
\begin{equation}\label{value}
J_i(\tuv)=\mathbb{E}\left[ h^{\theta(u,v)_T}(X_T^{\ox})+\int_0^T
f^{\theta(u,v)_r}(r,X_r^{\ox}) dr -C_\infty^{\theta(u,v)}\right].
\end{equation}
It means that between $C_1$ and $C_2$ there is a game of zero-sum type. The main objective of this section is to deal with the issue of existence of a value for this zero-sum switching game, i.e., whether or not it holds
\begin{equation}\lb{valsqint}
\inf_{v\in  \cb}\sup_{u\in  \ca }J_i(\tuv)=\sup_{u\in  \ca }\inf_{v\in  \cb }J_i(\tuv)
\end{equation}
or 
\begin{equation}\lb{valint}
\inf_{v\in  \cbi}\sup_{u\in  \cai}J_i(\tuv)=\sup_{u\in  \cai }\inf_{v\in  \cbi}J_i(\tuv).
\end{equation}
\begin{remark} In our framework when the players decide to switch at
the same time, we give priority to the maximizer $C_1$. This appears
through the definition of $r_n$ for $n\ge 2$. On the other hand, for the well-posedness of  $J_i(\tuv)$, it is enough that the coupling $\tuv$ is integrable. \end{remark}

To proceed we are going to define the notion of admissible 
square integrable and integrable strategies.
\begin{definition}[Non-anticipative switching strategies]\label{dss}
	Let $s \in [0,T]$ and $\nu$ a stopping time such that $\p$-a.s. $\nu\ge s$. Two controls $u^{1}=(\s_n^1)_{n\ge 0}$ and $u^{2}=(\s_n^2)_{n\ge 0}$ in $\bf A$ are said to be {\it equivalent}, denoting this by $u^{1} \equiv u^{2}$, {\it on $[s,\nu]$} if we have $\p$-a.s.,
	\[
	\mathbf{1}_{[\sigma^{1}_{0},\sigma^{1}_{1}]}(r) + \sum\limits_{n \ge 1}\mathbf{1}_{(\sigma^{1}_{n},\sigma^{1}_{n+1}]}(r) = \mathbf{1}_{[\sigma^{2}_{0},\sigma^{2}_{1}]}(r) + \sum\limits_{n \ge 1}\mathbf{1}_{(\sigma^{2}_{n},\sigma^{2}_{n+1}]}(r), \quad s \le r \le \nu.
	\]
	A {\it non-anticipative strategy} for $C_1$ is a mapping $\overline{\alpha} \colon \bf{B} \to \bf{A}$ such that for any $s \in [0,T]$, $\nu \in \mathcal{T}_{s}$, and $v^{1},v^{2} \in \bf{B}$ such that $v^{1} \equiv v^{2}$ on $[s,\nu]$, we have $\overline{\alpha}(v^{1}) \equiv \overline{\alpha}(v^{2})$ on $[s,\nu]$.
\ms

\nd The non-anticipative strategy $\overline \alpha$ for $C_1$ is called $square-integrable$	(resp. $integrable$) if for any $v \in \mathcal{B}$ we have $\overline{\alpha}(v) \in \mathcal{A}$ (resp. 
for any $v \in \mathcal{B}^{(1)}$ we have $\overline{\alpha}(v) \in \cai$).
\ms

\nd In a similar manner we define non-anticipative strategies,  square integrable and merely integrable strategies for $C_2$ denote by $\overline \beta$.
\ms

\nd We denote by $\mba$ and $\mbb$ (resp. $\mba^{(1)}$ and $\mbb^{(1)}$) the set of non-anticipative 
square integrable (resp. integrable) strategies for $C_1$ and $C_2$ respectively. \qed
\end{definition}
\section{Existence of a value of the zero-sum switching game. Link with systems of reflected BSDEs}

We are now going to deal with the issue of existence of a value for the zero-sum switching game described previously. For that let us introduce the following assumptions on the functions $f^i$, $h^i$, $\underline{g}_{i,i+1}$ and $\overline{g}_{i,i+1}$. Some assumptions will be only  applied in the next sections.
\ms

\noindent \textbf{Assumptions (H):} \ms

\noindent \textbf{[H1]} For any
$ i \in \Gamma$, $f^i$ does not
depend on $(\vec{y},z)$, is continuous in $(t,x)$ and \\belongs to class $\Pi_g$ ;
\ms

\noindent  \textbf{[H2]} For any $i \in \Gamma$, the function $h^i$,
which stands for the terminal payoff, is continuous w.r.t. $x$,
belongs to class $\Pi_g$ and satisfies the following consistency
condition: $\forall i \in \Gamma, \forall x\in \mathbb{R}^k,$
\begin{equation}
h^{i+1}(x)-\underline{g}_{i,i+1}(T,x)\leq h^i(x)\leq h^{i+1}(x)+\overline{g}_{\iplus1}(T,x).
\end{equation}

\noindent \textbf{[H3]} a) For all $i \in \Gamma$ and $(t,x) \in
[0,T]\times\mathbb{R}^k$, the functions $\underline{g}_{\iplus1}$ and $\overline{g}_{\iplus1}$ are continuous, non-negative, belong to $\Pi_g$  and verify: $$\underline{g}_{\iplus1}(\tx)+
    \overline{g}_{\iplus1}(\tx)>0.$$
b) They satisfy the non-free loop property, i.e., for any $j\in \Gamma$ and $(t,x)\in \0TR$,
\begin{equation}\label{nflp}
\varphi_{j,j+1}(t,x)+...+
\varphi_{p-1,p}(t,x)+
\varphi_{p,1}(t,x)+...+
\varphi_{j-1,j}(t,x)\neq 0
\end{equation}
where $\varphi_{\ell,\ell+1}(t,x)$ is either
$-\underline{g}_{\ell,\ell+1}(t,x)$ or
$\overline{g}_{\ell,\ell+1}(t,x)$. Let us notice that \eqref{nflp} also implies:
\begin{equation}\label{nflp2}
\overline g_{j,j+1}(t,x)+...+
\overline g_{p-1,p}(t,x)+
\overline g_{p,1}(t,x)+...+
\overline g_{j-1,j}(t,x)> 0
\end{equation} and
 \begin{equation}\label{nflp3}
\underline g_{j,j+1}(t,x)+...+
\underline g_{p-1,p}(t,x)+
\underline g_{p,1}(t,x)+...+
\underline g_{j-1,j}(t,x)> 0.
\end{equation}

\noindent \textbf{[H4]} For any $i=1,...,m$, the processes 
$(\bar{g}_{i,i+1}(s,X^{0,x}_s))_{s\le T}$ and $(\underline{g}_{i,i+1}(s,X^{0,x}_s))_{s\le T}$ are non decreasing.\qed
\ms

\noindent\textbf{[H5]} For any $\ig,$
\begin{enumerate}
\item[a)]$f^i$ is Lipschitz continuous in $(\vec{y},z)$ uniformly in $(\tx),$ i.e. for any $\vec{y}_1, \vec{y}_2\in\R^p$, $z_1,z_2\in \R^d$, $\txsp$,
\[|f^i(\tx,\vec{y}_1,z_1)-f^i(\tx,\vec{y}_2,z_2)|\leq C(|\vec{y}_1-\vec{y}_2|+|z_1-z_2|);\]
\item[b)] $\forall j\in\Gamma^{-i}$, the mapping $\bar y\mapsto f^i(t,x,[(y^k)_{k\in\Gamma^{-j}},\bar y],z)$ is non-decreasing when the other components $t,x,(y^k)_{k\in\Gamma^{-j}}$ and $z$ are fixed.
\item[c)]$f^i$ is continuous in $(t,x)$ uniformly in $(\vec y, z)$ and $f^i(t,x,0,0)$ belongs to $\Pi_g$. 
\end{enumerate}

In order to deal with the zero-sum switching game we rely on
solutions of systems of reflected BSDEs with oblique reflection or inter-connected bilateral obstacles of type \eqref{yigame} below. The following result
whose proof is given in Section 5 will allow us to show that the zero-sum switching game has a value.
\begin{theorem}\label{thyigame}Assume that assumptions [H1], [H2] and [H3] are fulfilled.
Then there exist processes $(Y^i,Z^i,K^{i,\pm})_{i\in \G}$ such
that: For any $\ig$ and $(\tx) \in \0TR$, $\forall \stt$,
\begin{equation}\label{yigame}
\left\lbrace
\begin{array}{l}
Y^i \in \sd ([t,T]); K^{i,\pm} \in \ad \,\,(K^{i,\pm}_t=0) \mbox{ and }Z^i \in \hd; \\ \\
Y_s^i=h^i(X_T^{\tx})+\int_s^Tf^i(r,X_r^{\tx})dr-\int_s^TZ_rdB_r+K_T^{i,+}-K_s^{i,+}-(K_T^{i,-}-K_s^{i,-});\\\\
L^i(\vec Y)_s\leq Y_s^i\leq U^i(\vec Y)_s;\\\\
\int_t^T (Y^i_s-L^i(\vec Y)_s)dK_s^{i,+}=0\;\mbox{and} \; \int_t^T
(Y^i_s-U^i(\vec Y)_s)dK_s^{i,-}=0 ;
\end{array}
\right.
\end{equation}
where for any $\stt$, $L^i(\vec
Y)_s:=Y^{i+1}_s-\underline{g}_{\iplus1}(s,X_s^{\tx})$ and $U^i(\vec
Y)_s:=Y^{i+1}_s+\bar{g}_{\iplus1}(s,X_s^{\tx})$.
\end{theorem}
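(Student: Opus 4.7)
The plan is to build $(Y^i,Z^i,K^{i,\pm})_{\ig}$ as the $m\rightarrow\infty$ limit of the penalization scheme already used to prove the PDE existence of Theorem~4.3 in Section~4, exploiting the Feynman--Kac bridge between the viscosity solutions $v^{i,m}, v^i$ of \eqref{sysedpintro} and their probabilistic representations.

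First, I would invoke Theorem~4.3 to produce, for each $m\geq 1$, continuous functions $v^{i,m}, v^i$ in $\Pi_g$ with $v^{i,m}\downarrow v^i$, where $(v^i)_{\ig}$ is the unique viscosity solution of \eqref{sysedpintro}. Setting $Y^{i,m}_s:=v^{i,m}(s,X^{\tx}_s)$ and $Y^i_s:=v^i(s,X^{\tx}_s)$ for $\stt$, the monotone convergence with a continuous limit triggers Dini's theorem, yielding uniform convergence on compact sets of $\0TR$. Combined with polynomial growth and the estimate \eqref{State_3} on $X^{\tx}$, this upgrades to convergence $Y^{i,m}\rightarrow Y^i$ in $\sd([t,T])$.

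Second, for each $m$ the penalized system with interconnected one-sided lower obstacles (driver $f^i(r,X^{\tx}_r)-m(y^i-y^{i+1}-\bar g_{i,i+1}(r,X^{\tx}_r))^+$, lower barrier $\underline g_{i,i+1}$, terminal $h^i$) admits, by the standard theory used in the construction of $v^{i,m}$, a solution $(Y^{i,m},Z^{i,m},K^{i,+,m})\in \sd\times \hdd\times\cA^2$; I introduce the auxiliary non-decreasing process $K^{i,-,m}_s:=m\int_t^s(Y^{i,m}_r-U^i(\vec Y^m)_r)^+ dr$, which approximates the would-be upper reflection. Applying It\^o's formula to $(Y^{i,m}-Y^{i,n})^2$, together with BDG and Gronwall and the $\sd$-convergence of $Y^{i,m}$, I would show that $(Z^{i,m})_m$ and $(K^{i,\pm,m})_m$ are Cauchy in $\hdd$ and $\cA^2$ respectively, with limits $Z^i$ and $K^{i,\pm}$. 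Passing to the limit in the penalized equation gives the BSDE part of \eqref{yigame}; the inequality $L^i(\vec Y)\leq Y^i$ transfers from the penalized lower-barrier condition, and $Y^i\leq U^i(\vec Y)$ follows from the penalization control $\E[\int_t^T(Y^{i,m}_s-U^i(\vec Y^m)_s)^+ds]\leq C/m$.

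The main obstacle is the Skorokhod minimality condition for the upper reflection, $\int_t^T(Y^i_s-U^i(\vec Y)_s)dK^{i,-}_s=0$; the analogous condition for $K^{i,+}$ transfers cleanly from $K^{i,+,m}$ by the $\sd$-convergence of the barriers together with the $\cA^2$-convergence of $K^{i,+,m}$. My strategy is to fix $i$ and observe that, since $Y^{i+1}$ is already constructed, the barriers $L^i(\vec Y),U^i(\vec Y)$ are continuous processes obeying $L^i(\vec Y)<U^i(\vec Y)$ strictly by [H3]a); the Cvitani\'c--Karatzas theory for standard doubly reflected BSDEs with continuous, strictly separated barriers then yields a unique $(\tilde Y^i,\tilde Z^i,\tilde K^{i,\pm})$ solving \eqref{yigame} along line $i$ with both flatness conditions, where $\tilde Y^i$ admits a representation as the value of a zero-sum Dynkin game between stopping times against the barriers. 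Matching this Dynkin-game representation with the Feynman--Kac identity $\tilde Y^i_s=v^i(s,X^{\tx}_s)=Y^i_s$ (the latter obtained from the penalization construction of $v^i$) identifies $(Y^i,Z^i,K^{i,\pm})=(\tilde Y^i,\tilde Z^i,\tilde K^{i,\pm})$, and in particular delivers the minimality of $K^{i,-}$, closing the proof.
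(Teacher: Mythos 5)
Your decisive step is the paper's own route: the paper also constructs $Y^i_s=v^i(s,X^{\tx}_s)$ as the Dini-uniform limit of the penalized Feynman--Kac representations, passes to the limit in the Dynkin-game representation of $\bar Y^{i,m}$, and then, because the barriers $L^i(\vec Y)$ and $U^i(\vec Y)$ are continuous and completely separated by [H3]-a), invokes the theory of doubly reflected BSDEs with \emph{fixed} separated barriers (Theorem 3.7 of \cite{HH05}) to produce $(\underline Y^i,Z^i,K^{i,\pm})$ with both Skorokhod conditions, and identifies $\underline Y^i=Y^i$ by matching the two Dynkin-game representations. So your final paragraph closes the argument exactly as the paper does, including the delivery of the minimality of $K^{i,-}$.

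However, your intermediate ``second'' step contains a genuine gap, and it is one the paper deliberately avoids. To show that $(Z^{i,m})_m$ is Cauchy in $\hdd$ and that $K^{i,\pm,m}$ converge in $\cA^2$ via It\^o on $(Y^{i,m}-Y^{i,n})^2$, you must control cross terms of the form $\E[\sup_s|Y^{i,m}_s-Y^{i,n}_s|\,(K^{i,-,m}_T+K^{i,-,n}_T)]$, which requires a bound on $\E[(K^{i,-,m}_T)^2]=\E[(m\int_t^T(Y^{i,m}_r-U^i(\vec Y^m)_r)^+dr)^2]$ uniform in $m$. No such bound is available here: it is precisely the Mokobodzki-type condition that the introduction explains cannot be imposed because the barriers depend on the unknown solution, and accordingly the theorem only claims $Z^i\in\hd$ and $K^{i,\pm}\in\ad$, not $\cH^{2,d}$ and $\cA^2$; your Cauchy argument would overprove the statement and cannot be carried out in this generality. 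The same issue affects your justification of $Y^i\le U^i(\vec Y)$ via $\E[\int_t^T(Y^{i,m}_s-U^i(\vec Y^m)_s)^+ds]\le C/m$, which presupposes a uniform bound on $\E[K^{i,-,m}_T]$; the paper instead proves this inequality (Proposition 5.1) by a localized contradiction argument at the level of the deterministic functions, using the uniform convergence of $\bar v^{i,m}$ on compacts and an estimate of the penalization term only up to an exit time from a small ball. Since your final step does not actually use the limits produced in step two, the proof can be repaired simply by deleting that step and substituting the localization argument for the upper-barrier inequality.
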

\noindent Note that obviously the solution
$(Y^i,Z^i,K^{i,\pm})_{i\in \G}$ of \eqref{yigame} depends also on
$(\tx)$ which we omit as there is no possible confusion. 

To proceed let
$(Y^i,Z^i,K^{i,\pm})_{i\in \G}$ be the solution of \eqref{yigame}
when $t=0$.  We then have (see e.g. \cite{shjpl}, for more details):
\begin{proposition}\label{Problem:Implicit-Dynkin-Game-Upper-Value}
For all $\ig$ and $s\le T$,  
	\vskip1em
\nd $(a)$ \enskip
	\begin{equation}\label{eq:Implicit-Dynkin-Game-Value}
	Y^{i}_{0} = \essinf_{\tau \in \mathcal{T}_{0}}\esssup_{\sigma \in \mathcal{T}_{0}}\mathcal{J}^{i}_{0}(\sigma,\tau) = \esssup_{\sigma \in \mathcal{T}_{0}}\essinf_{\tau \in \mathcal{T}_{0}}\mathcal{J}^{i}_{0}(\sigma,\tau),
	\end{equation}
	where,\begin{equation}\begin{array}{l}
		\mathcal{J}^{i}_{s}(\sigma,\tau)=\mathbb{E}\biggl[\int_{s}^{\sigma \wedge \tau}f^{i}(r,X^{0,x}_r)dr+ \mathbf{1}_{\{\tau < \sigma\}}{U^{i}_{\tau}}({Y}) + \mathbf{1}_{\{\sigma \le \tau,\; \sigma < T\}}{L^{i}_{\sigma}}({Y})
	 +h^i(X_T^{0,x})\mathbf{1}_{\{\sigma = \tau = T\}}\bigm \vert \mathcal{F}_{s} \bigr].
\end{array}		\end{equation}
$(b)$ We have $Y^{i}_{s} = \mathcal{J}^{i}_{s}(\sigma^{i}_{s},\tau^{i}_{s})$ where $\sigma^{i}_{s} \in \mathcal{T}_{s}$ and $\tau^{i}_{s} \in \mathcal{T}_{s}$ are stopping times defined by,
	\begin{equation}\label{eq:Implicit-Obstacle-Time-Selector}
	\begin{cases}
	\sigma^{i}_{s}  = \inf\{s \le t \le T \colon Y^{i}_{t} = L^{i}_{t}(\vec {Y})\} \wedge T, \\
	\tau^{i}_{s} = \inf\{s \le t \le T \colon Y^{i}_{t} = U^{i}_{t}(\vec {Y})\} \wedge T,
	\end{cases}
	\end{equation}
	and we use the convention that $\inf\emptyset = +\infty$. Moreover, $\bigl(\sigma^{i}_{s},\tau^{i}_{s}\bigr)$ is a saddle-point for the zero-sum Dynkin game, i.e.,
	\begin{equation}
	\mathcal{J}^{i}_{s}(\sigma,\tau^{i}_{s}) \le \mathcal{J}^{i}_{s}(\sigma^{i}_{s},\tau^{i}_{s}) \le \mathcal{J}^{i}_{s}(\sigma^{i}_{s},\tau) \quad \forall \sigma,\,\,\t\in \mathcal{T}_{s}. \qed
	\end{equation}
\end{proposition}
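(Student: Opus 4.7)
The plan is to fix $i\in\Gamma$ and regard the $i$-th line of \eqref{yigame} as a standard doubly reflected BSDE for the scalar process $Y^i$, with the two continuous obstacles $L^i(\vec Y)$ and $U^i(\vec Y)$ treated as data once $\vec Y$ has been constructed in Theorem \ref{thyigame}. Assumption [H3] ensures $L^i(\vec Y)<U^i(\vec Y)$ strictly (because $\underline g_{i,i+1}+\bar g_{i,i+1}>0$), which rules out any common interior hit time of the two barriers and is what makes the underlying Dynkin game well-posed. From here the argument is the classical identification of a doubly reflected BSDE with the value of a zero-sum Dynkin game, applied coordinate by coordinate.

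For arbitrary $\sigma,\tau\in\mathcal{T}_s$ I would integrate the BSDE on $[s,\sigma\wedge\tau]$ and take $\mathcal{F}_s$-conditional expectation to get
\begin{equation*}
Y^i_s=\mathbb{E}\Bigl[Y^i_{\sigma\wedge\tau}+\int_s^{\sigma\wedge\tau} f^i(r,X^{0,x}_r)\,dr+(K^{i,+}_{\sigma\wedge\tau}-K^{i,+}_s)-(K^{i,-}_{\sigma\wedge\tau}-K^{i,-}_s)\,\Bigm|\,\mathcal{F}_s\Bigr].
\end{equation*}
Specialising to $\sigma=\sigma^i_s$, $\tau=\tau^i_s$, the Skorokhod conditions in \eqref{yigame}, together with the continuity of $Y^i$ and $L^i(\vec Y)$, give $K^{i,+}_{\sigma^i_s}=K^{i,+}_s$, because $Y^i>L^i(\vec Y)$ on $[s,\sigma^i_s)$; symmetrically $K^{i,-}_{\tau^i_s}=K^{i,-}_s$. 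Hence both $K^{i,\pm}$ are frozen on $[s,\sigma^i_s\wedge\tau^i_s]$. On $\{\sigma^i_s<\tau^i_s\}$ continuity forces $Y^i_{\sigma^i_s}=L^i_{\sigma^i_s}(\vec Y)$; on $\{\tau^i_s<\sigma^i_s\}$ it forces $Y^i_{\tau^i_s}=U^i_{\tau^i_s}(\vec Y)$; and on $\{\sigma^i_s=\tau^i_s\}$ the strict inequality $L^i<U^i$ at any interior point forces this event to be contained in $\{\sigma^i_s=\tau^i_s=T\}$, where $Y^i_T=h^i(X^{0,x}_T)$. Reassembling the three cases gives exactly $Y^i_s=\mathcal{J}^i_s(\sigma^i_s,\tau^i_s)$.

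For the saddle-point inequalities I would next fix an arbitrary $\tau\in\mathcal{T}_s$ and consider the pair $(\sigma^i_s,\tau)$. Because $\sigma^i_s\wedge\tau\le\sigma^i_s$, $K^{i,+}$ is still constant on $[s,\sigma^i_s\wedge\tau]$, whereas $K^{i,-}$ is only non-decreasing; dropping the non-positive $-(K^{i,-}_{\sigma^i_s\wedge\tau}-K^{i,-}_s)$ term yields
\begin{equation*}
Y^i_s\le\mathbb{E}\Bigl[Y^i_{\sigma^i_s\wedge\tau}+\int_s^{\sigma^i_s\wedge\tau}f^i(r,X^{0,x}_r)\,dr\,\Bigm|\,\mathcal{F}_s\Bigr].
\end{equation*}
A case split according to which of $\tau<\sigma^i_s$, $\sigma^i_s\le\tau$ with $\sigma^i_s<T$, or $\sigma^i_s=\tau=T$ occurs, combined with $Y^i\le U^i(\vec Y)$ everywhere, with $Y^i_{\sigma^i_s}=L^i_{\sigma^i_s}(\vec Y)$ at the hitting time, and with the terminal condition, yields $Y^i_s\le\mathcal{J}^i_s(\sigma^i_s,\tau)$. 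Symmetrically, freezing $K^{i,-}$ on $[s,\tau^i_s\wedge\sigma]$ and using $Y^i\ge L^i(\vec Y)$ gives $Y^i_s\ge\mathcal{J}^i_s(\sigma,\tau^i_s)$ for every $\sigma\in\mathcal{T}_s$.

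Taking $\essinf_\tau$ in the first inequality and $\esssup_\sigma$ in the second, and combining with the trivial bound $\essinf_\tau\esssup_\sigma\mathcal{J}^i_s\ge\esssup_\sigma\essinf_\tau\mathcal{J}^i_s$, sandwiches $Y^i_s$ between the two game values and simultaneously proves (a), the representation in (b), and the saddle-point property. The step that needs the most care is the bookkeeping on $\{\sigma^i_s=\tau^i_s=T\}$, namely showing that the ``no-switch'' branch contributes exactly $h^i(X^{0,x}_T)$; this is where [H3] (strict positivity of $\underline g+\bar g$) is essential, as it excludes any simultaneous interior hit of the two barriers. The rest is routine manipulation of the two Skorokhod conditions.
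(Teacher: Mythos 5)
Your argument is the classical identification of the doubly reflected BSDE \eqref{yigame} (coordinate $i$, obstacles $L^i(\vec Y)$ and $U^i(\vec Y)$ frozen as data) with the value and saddle point of the associated Dynkin game, and this is precisely what the paper does: it gives no proof of Proposition \ref{Problem:Implicit-Dynkin-Game-Upper-Value} and simply refers to \cite{shjpl}, where this argument is carried out. Your case bookkeeping, the use of [H3]-a) to force $\{\sigma^i_s=\tau^i_s\}\subset\{\sigma^i_s=\tau^i_s=T\}$, and the sandwich giving (a) from the two one-sided inequalities are all correct. The one step you should not pass over silently is ``take $\mathcal{F}_s$-conditional expectation'': in this paper $Z^i$ belongs only to $\mathcal{H}^{2,d}_{loc}$ and $K^{i,\pm}$ only to $\mathcal{A}_{loc}$ (this is the whole point of Subsections 3.1--3.2), so $\int_s^{\cdot}Z^i_rdB_r$ is a priori only a local martingale. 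On $[s,\sigma^i_s\wedge\tau^i_s]$ it equals $Y^i_s-Y^i_{\cdot}+\int_s^{\cdot}f^i dr$ and is therefore dominated by the integrable variable $2\sup_{t\le T}|Y^i_t|+\int_0^T|f^i(r,X^{0,x}_r)|dr$, hence a true uniformly integrable martingale; for the one-sided inequalities, where only one of the $K^{i,\pm}$ increments is killed and the other is merely nonnegative (and possibly non-integrable), the stochastic integral is only bounded on one side by an integrable variable, and you must invoke the resulting super/submartingale property (via Fatou) to get the inequality in the right direction. With that localization remark added, your proof is complete.
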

\begin{remark}
For any $s<T$ and $\ig$, $\p[\s^i_s=\t^i_s<T]=0$ due to assumption [H3]-a) on $\underline g_{i,i+1}$ and $\overline{g}_{i,i+1}$.
\end{remark}
\subsection{Value of the zero-sum switching game on square integrable admissible controls}
We are now going to focus on the link between $Y^i$, $\ig$, with the value function of the zero-sum switching game over square integrable controls, namely the relation \eqref{valsqint}. For that we are going to make another supplementary assumption on the solution $(Y^i,Z^i,K^{i,\pm})_{i\in \G}$ of system \eqref{yigame} which is related to integrability of $Z^i$, $\ig$. Later on we will show that we have also the relation \eqref{valsqint} without this latter assumption, but at the price of some additional regularity properties of the switching costs 
$\underline{g}_{\iplus1}$ and $\bar{g}_{\iplus1}$ (see [H4]).
\ms

To proceed, consider the following sequence $(\rho_n,\theta_n)_{n\ge 0}$ defined as following: $\rho_0=0$, $\theta_0=i$ and for $n\ge 1$,
$$
\rho_n=\sigma_{\rho_{n-1}}^{\theta_{n-1}}\wedge \t_{\rho_{n-1}}^{\theta_{n-1}} \mx{ and }\theta_n=\left \{\begin{array}{l}
1+\theta_{n-1} \mx{ if }\theta_{n-1} \le p-1,\\
1 \mx{ if }\theta_{n-1}=p;
\end{array}
\right.
$$
where $\sigma_{\rho_{n-1}}^{\theta_{n-1}}$ and $\t_{\rho_{n-1}}^{\theta_{n-1}}$ are defined using \eqref{eq:Implicit-Obstacle-Time-Selector}. Next let $u^{(1)}:=(u_s^{(1)})_{s\le T}$ (resp. $u^{(2)}:=(u_s^{(2)})_{s\le T}$) be the piecewise process defined by: $u_s^{(1)}=0 \mx{ for }s<\rho_1 \mx{ and for } 
n\ge 1, s\in [\rho_n,\rho_{n+1}),$
$$
\,\,u_s^{(1)}=\left\{\begin{array}{l}
1+u_{\rho_n-}^{(1)} \mx {if }Y^{\theta_{n-1}}_{\rho_n}=
Y^{\theta_n}_{\rho_n}-\underline g_{\theta_{n-1},\theta_{n}}(\rho_n,X^{0,x}_{\rho_n}),\\
u_{\rho_n-}^{(1)} \mx{ if }Y^{\theta_{n-1}}_{\rho_n}>
Y^{\theta_n}_{\rho_n}-\underline g_{\theta_{n-1},\theta_{n}}(\rho_n,X^{0,x}_{\rho_n})
\end{array}
\right.
$$where $u_{\rho_n-}^{(1)}$ is the left limit of $u^{(1)}$ at ${\rho_n}$ (resp.
$u_s^{(2)}=0 \mx{ for }s<\rho_1 \mx{ and for } 
n\ge 1, s\in [\rho_n,\rho_{n+1}),$
$$
\,\,u_s^{(2)}=\left\{\begin{array}{l}
1+u_{\rho_n-}^{(2)} \mx {if }Y^{\theta_{n-1}}_{\rho_n}=
Y^{\theta_n}_{\rho_n}+\overline g_{\theta_{n-1},\theta_{n}}(\rho_n,X^{0,x}_{\rho_n}),\\
u_{\rho_n-}^{(2)} \mx{ if }Y^{\theta_{n-1}}_{\rho_n}<
Y^{\theta_n}_{\rho_n}+\overline g_{\theta_{n-1},\theta_{n}}(\rho_n,X^{0,x}_{\rho_n})
\end{array}
\right.
$$where $u_{\rho_n-}^{(2)}$ is the left limit of $u^{(2)}$ at ${\rho_n}$). Next let $u^*$ and $v^*$ be the following sequences of stopping times: $\s^*_0=\t^*_0=0$ and for $n\ge 1$,
$$
\s^*_n=\inf\{s\ge \s^*_{n-1}, u^{(1)}_s>u^{(1)}_{s-}\}\wedge T\mx{ and }
\t^*_n=\inf\{s\ge \t^*_{n-1}, u^{(2)}_s>u^{(2)}_{s-}\}\wedge T.$$
Note that $\theta(u^*,v^*)=(\rho_n,\theta_n)_{n\ge 0}$. We then have:
\begin{proposition}\lb{prop34}Assume that [H1], [H2], [H3] and $\hz$.  Then the  following properties of  $u^*=(\s^*_n)_{n\ge 0}$ and $v^*=(\t^*_n)_{n\ge 0}$  hold true:

i)  $u^*$ and $v^*$ are admissible ;

ii) the coupling $\theta(u^*,v^*)$ is square integrable ; 

iii) $$Y^i_0=J_i(\theta(u^*,v^*)).$$
\end{proposition}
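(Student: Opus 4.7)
The plan is to exploit the saddle-point property from Proposition \ref{Problem:Implicit-Dynkin-Game-Upper-Value}: first to derive admissibility in (i) from the non-free loop assumption [H3]-b), and then to produce (ii) and (iii) simultaneously from a telescoping identity for $Y^{\theta_n}$ across the switching intervals.

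For (i), I argue by contradiction: assume $\mathbb{P}(A)>0$, where $A=\{\rho_n<T\text{ for all }n\}$. On $A$ the sequence $(\rho_n)$ increases to some $\rho_\infty\le T$. By construction of $u^*,v^*$, whenever $\rho_n<T$ the point $\rho_n$ is a hitting time of $Y^{\theta_{n-1}}$ against either its lower or its upper obstacle, so $Y^{\theta_n}_{\rho_n}-Y^{\theta_{n-1}}_{\rho_n}=\psi_{\theta_{n-1},\theta_n}(\rho_n,X^{0,x}_{\rho_n})$ with $\psi_{\ell,\ell+1}\in\{\underline g_{\ell,\ell+1},-\overline g_{\ell,\ell+1}\}$. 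Since $\Gamma$ is finite, by pigeonhole some mode $j$ is hit infinitely often by $(\theta_n)$; extracting a subsequence $(n_k)$ with $\theta_{n_k}=j$, summing the above identities from $n_k+1$ to $n_k+p$ together with the telescoping remainders $Y^{\theta_{n_k+\ell-1}}_{\rho_{n_k+\ell}}-Y^{\theta_{n_k+\ell-1}}_{\rho_{n_k+\ell-1}}$, and letting $k\to\infty$ (using continuity of $Y^i$, $\underline g$, $\overline g$ and $\rho_{n_k+\ell}\to\rho_\infty$), one obtains $\sum_{\ell=0}^{p-1}\psi_{j+\ell,j+\ell+1}(\rho_\infty,X^{0,x}_{\rho_\infty})=0$, in contradiction with [H3]-b).

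For (ii) and (iii), the key observation is that by the definitions \eqref{eq:Implicit-Obstacle-Time-Selector} of $\sigma^{\theta_n}_{\rho_n},\tau^{\theta_n}_{\rho_n}$, one has $L^{\theta_n}(\vec Y)_s<Y^{\theta_n}_s<U^{\theta_n}(\vec Y)_s$ on $[\rho_n,\rho_{n+1})$. The flat-off conditions in \eqref{yigame} combined with the continuity of $K^{\theta_n,\pm}$ then force $K^{\theta_n,\pm}$ to be constant on $[\rho_n,\rho_{n+1}]$, so that the BSDE reduces on this interval to
\begin{equation}\label{seg}
Y^{\theta_n}_{\rho_n}=Y^{\theta_n}_{\rho_{n+1}}+\int_{\rho_n}^{\rho_{n+1}}f^{\theta_n}(r,X^{0,x}_r)\,dr-\int_{\rho_n}^{\rho_{n+1}}Z^{\theta_n}_r\,dB_r.
\end{equation}
Summing \eqref{seg} over $n=0,\dots,N(T)-1$, where $N(T)<\infty$ a.s.\ is the total number of switches before $T$ (by (i)), and incorporating the jumps $Y^{\theta_{n+1}}_{\rho_{n+1}}-Y^{\theta_n}_{\rho_{n+1}}=\psi_{\theta_n,\theta_{n+1}}(\rho_{n+1},X^{0,x}_{\rho_{n+1}})$ (which are exactly the successive summands of $C^{\theta(u^*,v^*)}_\infty$), together with $Y^{\theta_{N(T)}}_T=h^{\theta(u^*,v^*)_T}(X^{0,x}_T)$, yields
\begin{equation}\label{telid}
Y^{i}_0=h^{\theta(u^*,v^*)_T}(X^{0,x}_T)+\int_0^T f^{\theta(u^*,v^*)_r}(r,X^{0,x}_r)\,dr-\int_0^T Z^{\theta(u^*,v^*)_r}_r\,dB_r-C^{\theta(u^*,v^*)}_\infty.
\end{equation}

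Assertion (ii) then follows because the polynomial growth of $h^i,f^i$ together with \eqref{State_3}, $Y^i\in\S2$ and $Z^i\in\H2d$ place every term on the right-hand side of \eqref{telid} in $L^2(d\mathbb{P})$, hence so is $C^{\theta(u^*,v^*)}_\infty$. Taking expectations in \eqref{telid} kills the Brownian integral (its integrand is dominated by $\sum_i|Z^i|\in\H2d$) and delivers $Y^i_0=J_i(\theta(u^*,v^*))$, i.e.\ (iii). The main obstacle is step (i): it rests on combining the finite-cycle structure of $(\theta_n)$, the joint continuity of all data at the accumulation time $\rho_\infty$, and the non-free loop property applied along one full cycle; the remainder is a careful bookkeeping of the Skorokhod flat-off conditions on each switching interval.
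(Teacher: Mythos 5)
Your proof is correct and follows essentially the same route as the paper's: admissibility via the non-free loop property applied to an accumulating cycle of barrier hits, and the telescoped BSDE identity across the switching intervals (with the flat-off conditions killing $K^{\theta_n,\pm}$ on each interval) for (ii) and (iii). The only cosmetic difference is that the paper bounds $\sup_{N}\vert C_N^{\theta(u^*,v^*)}\vert$ via a Burkholder--Davis--Gundy estimate before passing to the limit, whereas you pass to the limiting identity first and use the It\^o isometry; both hinge identically on the hypothesis $(Z^i)_{i\in\Gamma}\in\mathcal{H}^{2,d}$.
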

\begin{proof}
i) Let us show that $u^*$ is admissible. Assume that $\p[\s^*_n<T, \forall n\ge 0]>0$. As the $\s^*_n$'s are defined through the $\rho_n's$, then there exists a loop $\{j,j+1,...,p-1,p,1,...,j-1,j\}$ such that 
$$\begin{array}{l}
\p[\omega, \exists \mx{ a subsequence }(n_\ell)_{\ell\ge 0} \mx{ such that }Y^j_{\rho_{n_\ell}}=Y^{j+1}_{\rho_{n_\ell}}+\varphi_{j,j+1}(\rho_{n_\ell},X^{0,x}_{\rho_{n_\ell}}), \dots,\\ \qquad\qquad Y^{j-1}_{\rho_{n_\ell+p-1}}=Y^{j}_{\rho_{n_\ell+p-1}}+\varphi_{j-1,j}(\rho_{n_\ell+p-1},X^{0,x}_{\rho_{n_\ell+p-1}}), \forall \ell \ge 0]>0\end{array}
$$ where $\varphi_{i,i+1}$ is the same as in \eqref{nflp} and equal to either
$-\underline{g}_{i,i+1}$ or $\overline{g}_{i,i+1}$ depending on whether $C_1$ or $C_2$ makes the decision to switch from the current state $j_0$ to the next one. Next let us set $\gamma=\lim_ {\ell\rightarrow \infty}\rho_{n_\ell}$. Take the limit w.r.t $\ell$ in the previous equalities to deduce that:
$$
\p[\varphi_{j,j+1}(\gamma,X^{0,x}_\gamma)+...+
\varphi_{p-1,p}(\gamma,X^{0,x}_\gamma)+
\varphi_{p,1}(\gamma,X^{0,x}_\gamma)+...+
\varphi_{j-1,j}(\gamma,X^{0,x}_\gamma)=0]>0
$$which is contradictory with the non free loop property \eqref{nflp}. By the same reasoning we obtain the admissibility of $v^*$.
\ms

\noindent ii) Let us recall the definition of the square integrability for $\theta(u^*,v^*)$. As $u^*$ and $v^*$ are proved admissible in i), then the coupling $\theta(u^*,v^*)$ exists. Next we will prove that $\lim_{N\rwi}C_N^{u^*,v^*}\in\mathbb{L}^2(d\mathbb{P})$.

For this recall that $i$ is fixed, $\rho_0=0$ and $\theta_0=i$. Next let us consider the equation satisfied by $Y^i$ on $[0,\rho_1]$. We then have: 
\begin{align}\label{cn1}
Y_0^i &= h^i(X_T^{0,x})1_{(\rho_1=T)}+Y_{\rho_1}^i1_{(\rho_1<T)}+\displaystyle\int_0^{\rho_1} f^i\left(r,X_r^{0,x}\right)dr-\int_0^{\rho_1}Z_r^idB_r+\int_0^{\rho_1} dK_r^{i,+}-\int_0^{\rho_1} dK_r^{i,-}\nonumber\\\
&=h^i(X_T^{0,x})1_{(\rho_1=T)}+\left(Y_{\sigma_0^i}^{i+1}-\underline{g}_{i,i+1}(\sigma_0^i,X_{\sigma_0^i}^{0,x})\right)1_{(\sigma_0^i\leq \tau_0^i)}1_{(\sigma_0^i<T)}+\left(Y_{\tau_0^i}^{i+1}+\bar{g}_{i,i+1}(\tau_0^i,X_{\tau_0^i}^{0,x})\right)1_{(\tau_0^i<\sigma_0^i)}\nonumber\\
&\qq \qq\qq+\displaystyle \int_0^{\rho_1}f^i\left(r,X_r^{0,x}\right)dr-\int_0^{\rho_1}Z_r^idB_r\nonumber\\\
&=h^{\theta_0}(X_T^{0,x})1_{(\rho_1=T)}+Y_{\rho_1}^{\theta_1}1_{(\rho_1<T)}-\left[ \underline{g}_{\theta_0\theta_1,}(\rho_1,X_{\rho_1}^{0,x})1_{(\rho_1=\sigma_0^{\theta_0})}-\bar{g}_{\theta_0\theta_1}(\rho_1,X_{\rho_1}^{0,x})1_{(\rho_1=\tau_0^{\theta_0})}  \right]1_{(\rho_1<T)}\nonumber\\
&\qq \qq\qq+\displaystyle \int_0^{\rho_1}f^{\theta_0}\left(r,X_r^{0,x}\right)dr-\int_0^{\rho_1}Z_r^{\theta_0}dB_r.
\end{align}
Next we deal with $Y_{\rho_1}^{\theta_1}$ by considering the doubly RBSDEs \eqref{yigame} in the interval $[\rho_1,\rho_2]$, i.e.
\begin{align}\label{cn2}
Y_{\rho_1}^{\theta_1}&=Y_{\rho_1}^{i+1}\nonumber\nonumber\\\
&=h^{\theta_1}(X_T^{0,x})1_{(\rho_2=T)}+Y_{\rho_2}^{\theta_2}1_{(\rho_2<T)}-\left[ \underline{g}_{\theta_1\theta_2}(\rho_2,X_{\rho_2}^{0,x})1_{(\rho_2=\sigma_{\rho_1}^{\theta_1})}-\bar{g}_{\theta_1\theta_2}(\rho_2,X_{\rho_2}^{0,x})1_{(\rho_2=\tau_{\rho_1}^{\theta_1})}  \right]1_{(\rho_2<T)}\nonumber\\
&+\displaystyle \int_{\rho_1}^{\rho_2}f^{\theta_1}\left(r,X_r^{0,x}\right)dr-\int_{\rho_1}^{\rho_2}Z_r^{\theta_1}dB_r.
\end{align}
By replacing $Y_{\rho_1}^{\theta_1}$ in \eqref{cn1} with \eqref{cn2}, then \eqref{cn1} yields
\begin{align}\label{cn3}
Y_0^i &= \displaystyle\sum_{n=1}^2h^{\theta_{n-1}}(X_T^{0,x})1_{(\rho_n=T)}1_{(\rho_{n-1}<T)}+Y_{\rho_2}^{\theta_2}1_{(\rho_2<T)}+ \int_{0}^{\rho_2}f^{\theta(u^*,v^*)_r}\left(r,X_r^{0,x}\right)dr-\int_0^{\rho_2}Z_r^{\theta(u^*,v^*)_r}dB_r\nonumber\\
&-\sum_{n=1}^2\left[ \underline{g}_{\theta_{n-1}\theta_n}(\rho_n,X_{\rho_n}^{0,x})1_{(\rho_n=\sigma_{\rho_{n-1}}^{\theta_{n-1}},\rho_n<T)}-\bar{g}_{\theta_{n-1}\theta_{n}}(\rho_n,X_{\rho_n}^{0,x})1_{(\rho_n=\tau_{\rho_{n-1}}^{\theta_{n-1}},\rho_n<T)}  \right].
\end{align}
Following \eqref{cn3} we replace iteratively $Y_{\rho_n}^{\theta_n}$ for $n=1,2,...,N$ we deduce that
\begin{align}\label{cn4}
Y_0^i&=\displaystyle\sum_{n=1}^Nh^{\theta_{n-1}}(X_T^{0,x})1_{(\rho_n=T)}1_{(\rho_{n-1}<T)}+Y_{\rho_N}^{\theta_N}1_{(\rho_N<T)}-C_N^{\theta(u^*,v^*)}+
\int_0^{\rho_N}f^{\theta(u^*,v^*)_r}(r,X_r^{0,x})dr\nonumber\\
&\qq\qq\qq-\int_0^{\rho_N}Z_r^{\theta(u^*,v^*)_r}dB_r.
\end{align} 
From \eqref{cn4} we obtain: $\fr N\ge 1$, 
\begin{align}
|C_N^{\theta(u^*,v^*)}|&\le \displaystyle\sum_{n=1}^N|h^{\theta_{n-1}}(X_T^{0,x})|1_{(\rho_n=T)}1_{(\rho_{n-1}<T)}+|Y_{\rho_N}^{\theta_N}1_{(\rho_N<T)}|+|Y_0^i|+|\int_0^{\rho_N}f^{\theta(u^*,v^*)_r}(r,X_r^{0,x})dr|\nonumber\\
&\qquad\qquad  +|\int_0^{\rho_N}Z_r^{\theta(u^*,v^*)_r}dB_r|\nn\\
&\leq \max_{\i\in\G}\left|h^i(X_T^{0,x})\right|+2\max_{\i\in\G}\sup_{s\in[0,T]}|Y_s^i|+\displaystyle\int_0^T|f^{\theta(u^*,v^*)_r}(r,X_r^{0,x})|dr+\sup_{s\in[0,T]}|\int_0^{s}Z_r^{\theta(u^*,v^*)_r}dB_r|.\nn
\end{align}
Finally by taking the supremum over $N$ we obtain:
\begin{align}\label{cn6}
\sup_{N\geq 1}\left|C_N^{\theta(u^*,v^*)}\right|&\leq \max_{\i\in\G}\left|h^i(X_T^{0,x})\right|+2\max_{\i\in\G}\sup_{s\in[0,T]}|Y_s^i|\nonumber\\&+\displaystyle\int_0^T|f^{\theta(u^*,v^*)_r}(r,X_r^{0,x})|dr+\sup_{s\in[0,T]}|\underbrace{\int_0^{s}Z_r^{\theta(u^*,v^*)_r}dB_r}_{M_s^{\theta(u^*,v^*)_s}}|.
\end{align}As $(Z^i)_{i\in\G}$ are $dt\otimes d\p$-square integrable, then  
$$
\E[\sup_{s\le T}|M_s^{\theta(u^*,v^*)_s}|^2]\le C\E[\sum_{i=1,m}\int_0^T|Z^i_s|^2ds]<\infty.
$$
It implies that the right-hand side of \eqref{cn6} belongs to 
$\mathbb{L}^2(d\mathbb{P})$ and then 
$\lim_{N\rwi}C_N^{\theta(u^*,v^*)}$ is square integrable, therefore $\theta(u^*,v^*)$ is 
square integrable.
\ms

\noindent Finally for iii), by directly taking the expectation on both sides of \eqref{cn4} we obtain
\begin{align}\label{cn7}
Y_0^i&=\mathbb{E}\left[\sum_{n=1}^Nh^{\theta_{n-1}}(X_T^{0,x})1_{(\rho_n=T)}1_{(\rho_{n-1}<T)}+Y_{\rho_N}^{\theta_N}1_{(\rho_N<T)}-C_N^{\theta(u^*,v^*)}+
\int_0^{\rho_N}f^{\theta(u^*,v^*)_r}(r,X_r^{0,x})dr\right ]
\end{align} 
Now it is enough to take the limit w.r.t. $N$ in \eqref{cn7} and to use the Lebesgue dominated convergence theorem since $\lim_{N\to\infty}\rho_N=T$ and considering \eqref{cn6}, to deduce that \begin{align*}
Y_0^i&=\mathbb{E}\left[h^{\theta(u^*,v^*)_T}(X_T^{0,x})+
\int_0^Tf^{\theta(u^*,v^*)_r}(r,X_r^{0,x})dr-C_\infty^{\theta(u^*,v^*)}\right]=J_i(\theta(u^*,v^*))
\end{align*}
since $\lim_{N\to\infty}C_N^{\theta(u^*,v^*)}=C_\infty^{\theta(u^*,v^*)}.$
\end{proof}
\ms

Let $i$ be the starting mode of the system which is fixed. Let $\s=(\s_n)_{n\ge 0}$ be an admissible control of $C_1$ (which then belongs to $\bf{A}$) and $v^*(\s)=:(\bar{\tau}_n)_{n\geq 0}$ be the optimal response strategy of $C_2$ which we define below. Indeed let $(\rho_n,\theta_n)_{n\ge 0}$ be the sequence defined as follows:  $\rho_0=0$, $\theta_0=i$ and for $n\ge 1$
\[ \rho_0=0,\; \theta_0=i,\; \mbox{ and for } n\geq 1,\] 
\begin{align}\label{v*u}
\rho_n=\sigma_{\check r_n}\wedge\tilde{\tau}_n,\; \theta_n=
\left\lbrace\begin{array}{l}
1+\theta_{n-1} \quad    \mbox{ if } \;\theta_{n-1}\leq p-1\\
1        \qquad\qquad\mbox{ if } \;\theta_{n-1}=p
\end{array}
\right.
\end{align} 
where $$\tilde{\tau}_n:=\tau_{\rho_{n-1}}^{\theta_{n-1}}:=\inf\left\lbrace s\geq \rho_{n-1}, Y_s^{\theta_{n-1}}=Y_s^{\theta_n}+\bar{g}_{\theta_{n-1}\theta_n}(s) \right\rbrace\wedge T \,\,(\mx{according to } \eqref{eq:Implicit-Obstacle-Time-Selector})$$ and $\check r_n$ is defined by $\check r_0=0, \check r_1=1,$ for $n\geq 2,$
\[ \check r_n=\check r_{n-1}+1_{\{\sigma_{\check r_{n-1}}\leq\tilde{\tau}_{n-1} \} }.\]
Next let $\check v$ be the piecewise process defined by:
$\check v_s=0 \mx{ for }s<\rho_1 \mx{ and for } 
n\ge 1, s\in [\rho_n,\rho_{n+1})$,
$$
 \check v_s=\left\{\begin{array}{l}
1+\check v_{\rho_n-}\mx {if }\rho_n=\tilde{\tau}_n<\sigma_{\check r_n}\\\\
\check v_{\rho_n-}\mx{ if }\rho_n=\sigma_{\check r_n}\le \tilde{\tau}_n
\end{array}
\right.
$$where $\check v_{\rho_n-}=\lim_{s\nearrow \rho_n}\check v_s$. 
Now the stopping times $\bar \t_n$, $n\ge 0$, are defined as follows:
\begin{equation}\label{v*u3}
\bar \t_0=0 \mx{ and for $n\ge 1$, }\bar\t_n=\inf\{s\ge \bar \tau_{n-1},\,\,
\check v_{s}>\check v_{s-}\}\wedge T 
\end{equation}
where $\check v_{s-}=\lim_{r\nearrow s}\check v_r$.

Next we are going to define the notion of optimal responce $u^*(v)=(\bar \s_n)_{n\ge 0}$ of $C_1$ to an admissible control $v=(\t_n)_{n\ge 0}$ of the second player $C_2$. Indeed let $(\rho_n,\theta_n)_{n\ge 0}$ be the sequence defined as follows:  $\rho_0=0$, $\theta_0=i$ and for $n\ge 1$
\[ \rho_0=0,\; \theta_0=i,\; \mbox{ and for } n\geq 1,\] 
\begin{align}\label{u*v}
\rho_n=\tilde \sigma_{n}\wedge{\tau}_{\check s_n},\; \theta_n=
\left\lbrace\begin{array}{l}
1+\theta_{n-1} \quad    \mbox{ if } \;\theta_{n-1}\leq p-1\\
1        \qquad\qquad\mbox{ if } \;\theta_{n-1}=p
\end{array}
\right.
\end{align} 
where $$\tilde{\s}_n:=\s_{\rho_{n-1}}^{\theta_{n-1}}:=\inf\left\lbrace s\geq \rho_{n-1}, Y_s^{\theta_{n-1}}=Y_s^{\theta_n}-\underline{g}_{\theta_{n-1}\theta_n}(s) \right\rbrace\wedge T\,\,(\mx{according to } \eqref{eq:Implicit-Obstacle-Time-Selector})$$ and $\check s_n$ is defined by $\check s_0=0, \check s_1=1,$ for $n\geq 2,$
\[ \check s_n=\check s_{n-1}+1_{\{\tilde\sigma_{n-1}>{\tau}_{ \check s_{n-1}} \} }.\]
Next let $\check u$ be the piecewise process defined by:
$\check u_s=0 \mx{ for }s<\rho_1 \mx{ and for } 
n\ge 1, s\in [\rho_n,\rho_{n+1})$,
$$
 \check u_s=\left\{\begin{array}{l}
1+\check u_{\rho_n-}\mx {if }\rho_n=\tilde \sigma_{n}\le {\tau}_{\check s_n}
\\\\
\check u_{\rho_n-}\mx{ if }\rho_n={\tau}_{\check s_n}<\tilde \sigma_{n}
\end{array}
\right.
$$where $\check u_{\rho_n-}=\lim_{s\nearrow \rho_n}\check u_s$. 
Now the stopping times $\bar \s_n$, $n\ge 0$, are defined as follows:
\begin{equation}\lb{v*u4}
\bar \s_0=0 \mx{ and for $n\ge 1$, }\bar\s_n=\inf\{s\ge \bar \s_{n-1},\,\,
\check u_{s}>\check s_{s-}\}\wedge T 
\end{equation}
where $\check u_{s-}=\lim_{r\nearrow s}\check u_r$. We then have:
\ms

\begin{proposition}\label{u*v} Assume [H1], [H2], [H3] and  $\hz$. Then for any $u\in\mathcal{A}$ and $v\in\mathcal{B}$, we have:\\
i)  $u^*(v)\in\mathcal{A}, v^*(u)\in\mathcal{B}$;\\
ii)
\begin{align}\label{u*v0}\; J_i\left(\theta(u,v^*(u))\right)\leq Y_0^i\leq J_i\left( \theta(u^*(v),v) \right).
\end{align}
\end{proposition}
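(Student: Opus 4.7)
The plan is to parallel Proposition \ref{prop34}: replace the identity $Y_{0}^{i}=\mathcal{J}_{0}^{i}(\sigma_{0}^{i},\tau_{0}^{i})$ by the saddle-point inequalities of Proposition \ref{Problem:Implicit-Dynkin-Game-Upper-Value} and iterate them along the couplings $\theta(u,v^{*}(u))$ and $\theta(u^{*}(v),v)$ instead of $\theta(u^{*},v^{*})$.

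For (i), admissibility of $u^{*}(v)$ follows as in Proposition \ref{prop34}(i). Suppose, for contradiction, that $\mathbb{P}[\bar{\sigma}_{n}<T,\,\forall n]>0$. Since $v\in\mathcal{B}$ is admissible, on this event infinitely many switches of the coupling $\theta(u^{*}(v),v)$ must cyclically traverse some loop $\{j,j+1,\ldots,j\}$, and at each such switching time $\rho_{n}$ one has
\[
Y_{\rho_{n}}^{\theta_{n-1}}-Y_{\rho_{n}}^{\theta_{n}}=\varphi_{\theta_{n-1},\theta_{n}}(\rho_{n},X_{\rho_{n}}^{0,x}),
\]
with $\varphi$ equal to $-\underline{g}$ or $\bar{g}$ according to who switches. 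Setting $\gamma=\lim_{n}\rho_{n}$ and using the continuity of $Y^{i},X^{0,x},\underline{g},\bar{g}$ yields $\sum\varphi_{\ell,\ell+1}(\gamma,X_{\gamma}^{0,x})=0$, contradicting \eqref{nflp}. For square integrability I replay the telescoping identity \eqref{cn1}--\eqref{cn4} along $\theta(u^{*}(v),v)$ rather than along $\theta(u^{*},v^{*})$: the very same dominating bound \eqref{cn6} is obtained on $\sup_{N}|C_{N}^{\theta(u^{*}(v),v)}|$, which belongs to $L^{2}(d\mathbb{P})$ under $\hz$. Combining $C_{\infty}^{\theta(u^{*}(v),v)}\in L^{2}$ with $B_{T}^{v}\in L^{2}$ (because $v\in\mathcal{B}$), I deduce $A_{T}^{u^{*}(v)}\in L^{2}$, i.e.\ $u^{*}(v)\in\mathcal{A}$. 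The property $v^{*}(u)\in\mathcal{B}$ is obtained symmetrically from $u\in\mathcal{A}$.

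For (ii) I prove only the right inequality, the left being fully symmetric. By Proposition \ref{Problem:Implicit-Dynkin-Game-Upper-Value} applied at $s=0$, $Y_{0}^{i}\le\mathcal{J}_{0}^{i}(\sigma_{0}^{i},\tau_{1})=\mathcal{J}_{0}^{i}(\tilde{\sigma}_{1},\tau_{1})$, and expanding the right-hand side at $\rho_{1}=\tilde{\sigma}_{1}\wedge\tau_{1}$ in terms of $L^{\theta_{0}}(\vec{Y}),U^{\theta_{0}}(\vec{Y}),h^{\theta_{0}}$ gives
\[
Y_{0}^{i}\le\mathbb{E}\Bigl[\int_{0}^{\rho_{1}}f^{\theta_{0}}(r,X_{r}^{0,x})\,dr+Y_{\rho_{1}}^{\theta_{1}}\mathbf{1}_{\{\rho_{1}<T\}}+h^{\theta_{0}}(X_{T}^{0,x})\mathbf{1}_{\{\rho_{1}=T\}}-c_{1}\Bigr],
\]
where $c_{1}=\underline{g}_{\theta_{0}\theta_{1}}(\rho_{1},X_{\rho_{1}}^{0,x})\mathbf{1}_{\{\rho_{1}=\tilde{\sigma}_{1}\le\tau_{1},\,\tilde{\sigma}_{1}<T\}}-\bar{g}_{\theta_{0}\theta_{1}}(\rho_{1},X_{\rho_{1}}^{0,x})\mathbf{1}_{\{\rho_{1}=\tau_{1}<\tilde{\sigma}_{1}\}}$ is the signed cost of the first switch in $\theta(u^{*}(v),v)$. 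Iterating the same step at each $\rho_{n-1}$, using the dynamic/flow version of the saddle-point property at the state $(\theta_{n-1},\rho_{n-1})$ and the tower property of conditional expectation, after $N$ rounds one obtains the $\le$-analogue of \eqref{cn4}. By (i) the coupling $\theta(u^{*}(v),v)$ is square integrable, so the bound \eqref{cn6} dominates $|C_{N}^{\theta(u^{*}(v),v)}|$ uniformly in $L^{2}$; together with $\rho_{N}\uparrow T$ a.s.\ and the continuity of $Y^{i}$, dominated convergence yields $Y_{0}^{i}\le J_{i}(\theta(u^{*}(v),v))$.

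The main obstacle is the iteration in (ii). Proposition \ref{Problem:Implicit-Dynkin-Game-Upper-Value} is stated only at time $0$, whereas one needs its dynamic counterpart at the random time $\rho_{n-1}$: on $\{\rho_{n-1}<T\}$, $Y_{\rho_{n-1}}^{\theta_{n-1}}$ should again coincide with $\essinf\esssup\mathcal{J}^{\theta_{n-1}}_{\rho_{n-1}}(\sigma,\tau)$ over $\mathcal{T}_{\rho_{n-1}}\times\mathcal{T}_{\rho_{n-1}}$, and the minimal representatives \eqref{eq:Implicit-Obstacle-Time-Selector} evaluated at $\rho_{n-1}$ should form a saddle-point of that shifted Dynkin game. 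This flow property is a standard consequence of the Skorohod minimality conditions in \eqref{yigame}, but it has to be made explicit before the inequalities can be chained and the limit $N\to\infty$ taken.
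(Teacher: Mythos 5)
Your overall strategy for part (ii) is the paper's: iterate one-sided inequalities along the coupling $\theta(u^*(v),v)$ and pass to the limit; the paper obtains the $\le$-analogue of \eqref{cn4} directly from the RBSDE written between the stopping times $\rho_{n-1}$ and $\rho_n$ (using that $K^{\theta_{n-1},+}$ does not increase before $\tilde\sigma_n$ and dropping the nonnegative term $\int dK^{\theta_{n-1},-}$), which quietly sidesteps the ``dynamic saddle-point at a random time'' issue you rightly flag at the end. Your admissibility argument in (i) is also the paper's.

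There is, however, a genuine gap in your square-integrability step in (i). You claim that the telescoping \emph{identity} \eqref{cn1}--\eqref{cn4} replays along $\theta(u^*(v),v)$ and yields ``the very same dominating bound \eqref{cn6}'' on $\sup_N|C_N^{\theta(u^*(v),v)}|$. That identity is specific to the pair of \emph{optimal} responses: it uses that on each $[\rho_{n-1},\rho_n]$ \emph{both} $K^{\theta_{n-1},+}$ and $K^{\theta_{n-1},-}$ are flat, and that at $\rho_n$ the process $Y^{\theta_{n-1}}$ \emph{equals} the relevant obstacle. For an arbitrary $v\in\mathcal{B}$, $C_2$'s switching times are unrelated to the upper obstacle, so $K^{\theta_{n-1},-}$ may increase on $[\rho_{n-1},\rho_n]$ and $Y^{\theta_{n-1}}_{\rho_n}\le U^{\theta_{n-1}}_{\rho_n}(\vec Y)$ is in general strict; one only gets an inequality, hence an \emph{upper} bound on $C_N$ (equivalently on $A^{u^*(v)}_{\rho_N}-\tilde B^v_{\rho_N}$), not a two-sided bound on $|C_N|$. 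Indeed a bound of the form \eqref{cn6}, whose right-hand side does not involve $v$, cannot hold: a $v$ that switches very many times makes $C_N$ arbitrarily negative. The correct route --- and the one the paper takes --- is to isolate $A^{u^*(v)}_{\rho_N}$ from the one-sided inequality \eqref{eq:321}, use $A^{u^*(v)}\ge 0$, $\tilde B^v_{\rho_N}\le B^v_T\in L^2(d\p)$ and $\hz$ to get $\E[(A^{u^*(v)}_{\rho_N})^2]\le C$ uniformly in $N$, and conclude by Fatou. Your later sentence ``combining $C_\infty\in L^2$ with $B^v_T\in L^2$ I deduce $A_T\in L^2$'' shows you have the right decomposition in mind, but the premise $\sup_N|C_N|\in L^2$ as obtained from \eqref{cn6} is not justified and must be replaced by this one-sided argument. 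Once (i) is repaired this way, your part (ii) goes through (the dominating function for the passage to the limit being $A^{u^*(v)}_T+B^v_T$ rather than the right-hand side of \eqref{cn6}).
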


\begin{proof}
\noindent i) In order to show $u^*(v)\in\mathcal{A}$, when $v=(\t_n)_{n\ge 0}\in \cb$, we need to prove that $u^*(v)=(\bar \s_n)_{n\ge 0}$ is admissible and $\mathbb{E}\left[(A_T^{u^*(v)})^2\right]<\infty $.\\

Indeed if $u^*(v)=(\bar \s_n)_{n\ge 0}$ is not admissible then there would exist a loop $\{j,j+1,...,p-1,p,1,...,j-1,j\}$ which is visited infinitley many times, i.e.,  
$$\begin{array}{l}
\p[\omega, \exists \mx{ a subsequence }(n_\ell)_{\ell\ge 0} \mx{ such that }Y^j_{\bar \s_{n_\ell}}=Y^{j+1}_{\bar \s_{n_\ell}}-
\underline{g}_{j,j+1}(\bar \s_{n_\ell},X^{0,x}_{\bar \s_{n_\ell}}), \dots,\\ \qquad\qquad Y^{j-1}_{\bar \s_{n_\ell+p-1}}=Y^{j}_{\bar \s_{n_\ell+p-1}}-\underline{g}_{j-1,j}(\bar \s_{n_\ell+p-1},X^{0,x}_{\bar \s_{n_\ell+p-1}}), \forall \ell \ge 0]>0.\end{array}
$$ Next let us set $\eta=\lim_ {\ell\rightarrow \infty}\bar \s_{n_\ell}$. 
Take the limit in the previous equalities yield:
$$
\p[\underline g_{j,j+1}(\eta,X^{0,x}_\eta)+...+
\underline g_{p-1,p}(\eta,X^{0,x}_\eta)+
\underline g_{p,1}(\eta,X^{0,x}_\eta)+...+
\underline g_{j-1,j}(\eta,X^{0,x}_\eta)=0]>0.
$$
But this is contradictory with the non free loop property \eqref{nflp3}.  

Next let us show that $\mathbb{E}\left[(A_T^{u^*(v)})^2\right]<\infty $. Proceeding similarly as in the proof of Proposition \ref{prop34}, in the interval $[0,\rho_1]$ we have
\begin{align}\label{u*v1}
Y_0^i=h^i(X_T^{0,x})1_{(\rho_1=T)}+Y_{\rho_1}1_{(\rho_1<T)}+\int_0^{\rho_1}f^i(r,X_r^{0,x})dr-\int_0^{\rho_1}Z_r^idB_r+\int_0^{\rho_1}dK_r^{i,+}-\int_0^{\rho_1}dK_r^{i,-}.
\end{align}
Note that the minimizer $C_2$'s control $v=(\t_n)_{n\ge 0}$ is not necessarily optimal, then $\int_0^{\rho_1}dK_r^{i,-}\geq 0$ and we know that for any $s\in [0,T]$, $Y_s^i\leq Y_s^{i+1}+\bar{g}_{i,i+1}(s,X_s^{0,x})$. On the other hand, since $\rho_1=\bar \s_1\wedge \t_{\check s_1}$ then $\int_0^{\rho_1}dK_r^{i,+}= 0$. It follows that:
\begin{align}
Y_0^i	&\leq h^i(X_T^{0,x})1_{(\rho_1=T)}+Y_{\rho_1}^i1_{(\rho_1<T)}+\int_0^{\rho_1}f^i(r,X_r^{0,x})dr-\int_0^{\rho_1}Z_r^idB_r\nonumber \\
&\le h^i(X_T^{0,x})1_{(\rho_1=T)}+1_{(\rho_1<T)}(Y^i_{\bar \s_1}1_{\{\rho_1=\bar \s_1\}}+
Y^i_{\t_{\check s_1}}1_{\{\rho_1=\t_{\check s_1}\}})+\int_0^{\rho_1}f^i(r,X_r^{0,x})dr-\int_0^{\rho_1}Z_r^idB_r\nonumber\\
&\leq h^{\theta_0}(X_T^{0,x})1_{(\rho_1=T)}+Y_{\rho_1}^{\theta_1}1_{(\rho_1<T)}-\left[ \underline{g}_{\theta_0\theta_1}(\rho_1,X_{\rho_1}^{0,x})1_{(\rho_1=\bar{\sigma}_1<T)}-\bar{g}_{\theta_0\theta_1}(\rho_1,X_{\rho_1}^{0,x})1_{(\rho_1= \tau_{\check s_1}<T)} \right]\nonumber\\
&\qq\qq +\int_0^{\rho_1}f^{\theta_0}(r,X_r^{0,x})dr-\int_0^{\rho_1}Z_r^{\theta_0}dB_r.\label{u*v2}
\end{align}
Proceeding then iteratively for $n=1,2,...,N$ to obtain
\begin{align}\label{eq:321}
Y_0^i &\leq \displaystyle\sum_{n=1}^N h^{\theta_{n-1}}(X_T^{0,x})1_{(\rho_{n-1}<T,\rho_n=T)}+Y_{\rho_N}^{\theta_N}1_{(\rho_N<T)}+\int_0^{\rho_N}f^{\theta(u^*(v),v)_r}(r,X_r^{0,x})dr-\int_0^{\rho_N}Z_r^{\theta(u^*(v),v)_r}dB_r\nonumber\\
&-\sum_{n=1}^N\left[ \underline{g}_{\theta_{n-1}\theta_n}(\rho_n,X_{\rho_n}^{0,x})1_{(\rho_n=\bar{\sigma}_n<T)}-\bar{g}_{\theta_{n-1}\theta_n}(\rho_n,X_{\rho_n}^{0,x})1_{(\rho_n=\tau_{\check s_n}<T)} \right].\end{align}
Then we have
\begin{align}
A_{\rho_N}^{u^*(v)}&\leq \displaystyle\sum_{n=1}^N h^{\theta_{n-1}}(X_T^{0,x})1_{(\rho_{n-1}<T,\rho_n=T)}+Y_{\rho_N}^{\theta_N}1_{(\rho_N<T)}+\int_0^{\rho_N}f^{\theta(u^*(v),v)_r}(r,X_r^{0,x})dr\nonumber\\
&-\int_0^{\rho_N}Z_r^{\theta(u^*(v),v)_r}dB_r-Y_0^i+B_{\rho_N}^v.\label{u*v4}
\end{align}
Next as $v\in \cb$ and since $\hz$, taking the squares of each hand-side of the previous inequality to deduce that:
$$
\E[(A_{\rho_N}^{u^*(v)})^2]\le C
$$
for some real constant $C$. Finally to conclude it is enough to use Fatou's Lemma since $\rho_N \rightarrow T$ as $N\rightarrow \infty$.

In the same way we show that $v^*(u)$ belongs to $\cb$ when $u$ belongs to $\ca$.
\ms

\nd iii) Let $v\in \cb$. Going back to \eqref{eq:321}, take expectation to obtain: 
$$
Y^i_0=\mathbb{E}[Y_0^i]\le \mathbb {E}[\sum_{n=1}^N h^{\theta_{n-1}}(X_T^{0,x})1_{(\rho_{n-1}<T,\rho_n=T)}+Y_{\rho_n}^{\theta_n}1_{(\rho_n<T)}+\int_0^{\rho_N}f^{\theta(u^*(v),v)_r}(r,X_r^{0,x})dr-C_N^{\theta(u^*(v),v)}].
$$
As $v\in \cb$ and $u^*(v)\in \ca$, then for any $N\ge 1$, $|C_N^{\theta(u^*(v),v)}|\le A^{u^*(v)}_T+B^v_T\in L^2(d\p)$. Take now the limit w.r.t $N$ in the right-hand side of the previous inequality and using dominated convergence theorem to deduce that:
$$
Y^i_0\le \E[h^{\theta_T(u^*(v),v)}(X_T^{0,x})+\int_0^{T}f^{\theta(u^*(v),v)_r}(r,X_r^{0,x})dr-C_\infty^{\theta(u^*(v),v)}]=J_i(\theta(u^*(v),v)), \,\,\fr v\in \cb.
$$
The other inequality is shown in a similar fashion.
\end{proof}
As a by-product we obtain the following result:
\begin{theorem}\label{thm36} Assume [H1], [H2],  [H3] and $\hz$. Then for any $i=1,...,m$, 
$$
Y^i_0=\sup_{u\in \ca}\inf_{v\in \cb}J_i(\theta(u,v))=\inf_{v\in \cb}\sup_{u\in \ca}J_i(\theta(u,v)).
$$
\end{theorem}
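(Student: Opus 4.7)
The proof is essentially a direct combination of Propositions \ref{prop34} and \ref{u*v}, which do all the technical work. I will use the standard saddle-point-via-sandwich argument, bracketing $Y^i_0$ between $\sup_u\inf_v J_i$ and $\inf_v\sup_u J_i$.

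First, fix any $u\in \ca$. By Proposition \ref{u*v}-i), we have $v^*(u)\in \cb$, and by Proposition \ref{u*v}-ii),
\[
\inf_{v\in \cb}J_i(\theta(u,v))\le J_i\left(\theta(u,v^*(u))\right)\le Y^i_0.
\]
Taking the supremum over $u\in \ca$ yields $\sup_{u\in \ca}\inf_{v\in \cb}J_i(\theta(u,v))\le Y^i_0$.

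Symmetrically, fix any $v\in \cb$. Again by Proposition \ref{u*v}, $u^*(v)\in \ca$ and
\[
Y^i_0\le J_i\left(\theta(u^*(v),v)\right)\le \sup_{u\in \ca}J_i(\theta(u,v)).
\]
Taking the infimum over $v\in \cb$ yields $Y^i_0\le \inf_{v\in \cb}\sup_{u\in \ca}J_i(\theta(u,v))$.

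Combining these two inequalities with the trivial min-max inequality $\sup_{u}\inf_{v}J_i\le \inf_{v}\sup_{u}J_i$, we obtain the chain
\[
\sup_{u\in \ca}\inf_{v\in \cb}J_i(\theta(u,v))\le Y^i_0\le \inf_{v\in \cb}\sup_{u\in \ca}J_i(\theta(u,v))\le \sup_{u\in \ca}\inf_{v\in \cb}J_i(\theta(u,v)),
\]
forcing all inequalities to be equalities and identifying $Y^i_0$ as the common value. (Note that Proposition \ref{prop34} is not strictly needed here for the value itself, but it complements the conclusion by exhibiting an explicit saddle-point $(u^*,v^*)$ realising the value.) The only nontrivial ingredient is Proposition \ref{u*v}, and no new estimate is required; the argument is purely formal once admissibility and square-integrability of the response strategies $u^*(v),v^*(u)$ are in hand.
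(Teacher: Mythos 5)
The final step of your argument does not close. From Proposition \ref{u*v} you correctly derive the sandwich
\[
\sup_{u\in\ca}\inf_{v\in\cb}J_i(\theta(u,v))\;\le\; Y^i_0\;\le\;\inf_{v\in\cb}\sup_{u\in\ca}J_i(\theta(u,v)),
\]
but to force equalities you then need $\inf_{v}\sup_{u}J_i\le\sup_{u}\inf_{v}J_i$, and you justify this last link by invoking ``the trivial min-max inequality $\sup_{u}\inf_{v}J_i\le\inf_{v}\sup_{u}J_i$.'' That inequality points the wrong way: it is perfectly consistent with your sandwich and adds nothing, since \emph{any} number lying between the lower and the upper value satisfies your two bounds. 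The reverse inequality $\inf\sup\le\sup\inf$ is precisely the nontrivial content of the theorem (existence of a value) and cannot be obtained from the trivial one; as written, your chain does not close and the conclusion does not follow.

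The root of the problem is that you used the optimal responses in the easy direction: from $J_i(\theta(u,v^*(u)))\le Y^i_0$ you passed to $\inf_{v}J_i(\theta(u,v))\le J_i(\theta(u,v^*(u)))\le Y^i_0$ and took $\sup_u$, which only controls the \emph{lower} value. The paper keeps the responses on the outside of the chain: it asserts
\[
\inf_{v\in\cb}\sup_{u\in\ca}J_i(\theta(u,v))\le\sup_{u\in\ca}J_i(\theta(u,v^*(u)))\le Y^i_0\le\inf_{v\in\cb}J_i(\theta(u^*(v),v))\le\sup_{u\in\ca}\inf_{v\in\cb}J_i(\theta(u,v)),
\]
i.e.\ the \emph{upper} value is dominated by the worst case against the response $v^*(\cdot)$, and the \emph{lower} value dominates the guaranteed payoff of the response $u^*(\cdot)$; only after obtaining this hard-direction sandwich $\inf\sup\le Y^i_0\le\sup\inf$ does the genuinely trivial inequality $\sup\inf\le\inf\sup$ collapse everything to equality. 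This is the entire reason Proposition \ref{u*v} constructs \emph{responses} $u^*(v)$, $v^*(u)$ rather than a single pair of controls: they are what deliver the nontrivial direction. Your write-up must be reorganized along these lines; the two inequalities you actually proved are the cheap ones and do not establish that the game has a value.
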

\begin{proof}By \eqref{u*v0}, we know that for any $u\in \ca$ and $v\in \cb$,
$$
J_i\left(\theta(u,v^*(u))\right)\leq Y_0^i\leq J_i\left( \theta(u^*(v),v) \right).
$$Therefore
$$
\sup_{u\in \ca}J_i\left(\theta(u,v^*(u))\right)\leq Y_0^i\leq \inf_{v\in \cb}J_i\left( \theta(u^*(v),v) \right).
$$
As when $u\in \ca$ (resp. $v\in \cb$), $v^*(u)\in \cb$ (resp. $u^*(v)\in \ca$) then 
$$
\inf_{v\in \cb}\sup_{u\in \ca}J_i\left(\theta(u,v))\right)\le \sup_{u\in \ca}J_i\left(\theta(u,v^*(u))\right)\leq Y_0^i\leq \inf_{v\in \cb}J_i\left( \theta(u^*(v),v) \right)\leq \sup_{u\in\ca}\inf_{v\in \cb}J_i\left( \theta(u,v) )\right)
$$which implies the desired result since the right-hand side is smaller than the left-hand one. \end{proof}
\begin{remark}\lb{jeuetendu}Note that we have also the following equalities: For any $\ig$, 
\begin{align*}
Y_0^i&=\sup_{u\in \ca}J_i\left(\theta(u,v^*(u))\right)=\inf_{v\in \cb}J_i\left( \theta(u^*(v),v) \right)\\&=\inf_{v\in \mbb }\sup_{u\in \ca}J_i\left(\theta(u,v(u))\right)=\sup_{u\in\mba }\inf_{v\in \cb}J_i\left( \theta(u(v),v) \right).
\end{align*}
Actually let us show the fourth equality. Let $\tilde u(.)\in \mba$. Then
$$
\inf_{v\in \cb}J_i\left( \theta(\tilde u(v),v) \right)\le \inf_{v\in \cb}\sup_{u\in \ca}J_i\left( \theta(u,v) \right)=Y^i_0=\inf_{v\in \cb}J_i\left( \theta(u^*(v),v) \right)
$$which implies the fourth equality since $u^*(.)\in \mba$. The third one is proved similarly.\qed
\end{remark}

As mentioned before, the bottleneck for proving the existence of a value for the zero-sum switching game over square integrable controls is the square integrability of $(Z^i)_{i\in\G}$. The point now is whether or not it is possible to characterize $Y^i$ as the value of the zero-sum switching game without assuming the square integrability of $(Z^i)_{\ig}$. At least at the cost of adding some supplementary conditions on the data of the game. The answer is affirmative if we require assumption [H4]  on the switching costs. Note that this assumption [H4]  is satisfied if $\bar{g}_{i,i+1}$ and 
$\underline{g}_{i,i+1}$, $i=1,\dots,p$, do not depend on $x$ and are non decreasing w.r.t $t$ (e.g. they are constant). 
\ms

We then have: 

\begin{theorem}\label{game-value-without-z}
Assume [H1], [H2],[H3] and [H4]. Then for any $i\in\G$, $$
Y^i_0=\sup_{u\in \ca}\inf_{v\in \cb}J_i(\theta(u,v))=\inf_{v\in \cb}\sup_{u\in \ca}J_i(\theta(u,v)).
$$
\end{theorem}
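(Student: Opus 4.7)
The plan is to imitate the proof of Theorem \ref{thm36}, substituting the hypothesis $(Z^i)_{\ig}\in\H2d$ with a localization argument that exploits the path-monotonicity of the switching costs granted by [H4]. The candidate optimal-response maps $u^*(v)$ and $v^*(u)$, defined exactly as in Section 3 from the $Y^i$'s and the Dynkin selectors \eqref{eq:Implicit-Obstacle-Time-Selector}, are already at hand; their admissibility is again a consequence of the non-free-loop property [H3]-b), with no integrability of $Z^i$ required.

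The core step is to establish that $u^*(v)\in\cA$ for every $v\in\cB$, and symmetrically $v^*(u)\in\cB$ for every $u\in\cA$. To handle the fact that $\int_0^\cdot Z^i\,dB$ is only a local martingale, I would introduce the stopping times
\[
\tau_k:=\inf\Big\{s\le T:\sum_{\ig}\int_0^s|Z^i_r|^2\, dr\ge k\Big\}\wedge T,
\]
so that $\tau_k\nearrow T$ $\p$-a.s.\ and each stopped integral $\int_0^{\cdot\wedge\tau_k}Z^i\,dB$ becomes a genuine $L^2$-martingale. Running the iterative substitution \eqref{u*v1}--\eqref{eq:321} of Proposition \ref{u*v} on $[0,\rho_N\wedge\tau_k]$ in place of $[0,\rho_N]$ produces the inequality
\[
A^{u^*(v)}_{\rho_N\wedge\tau_k}\le \Xi_{N,k}+B^v_{\rho_N\wedge\tau_k}-\int_0^{\rho_N\wedge\tau_k}Z^{\theta(u^*(v),v)_r}_r\,dB_r-Y^i_0,
\]
where $\Xi_{N,k}$ is bounded in $L^2$ uniformly in $(N,k)$ thanks to [H1]--[H2], to $Y^i\in\S2$ and to \eqref{State_3}.

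Here [H4] enters decisively. Since $s\mapsto\underline g_{\iplus1}(s,X^{0,x}_s)$ and $s\mapsto\bar g_{\iplus1}(s,X^{0,x}_s)$ are non-decreasing, each individual switching cost is pathwise dominated by its terminal value, which by the polynomial growth in [H3]-a) and by \eqref{State_3} belongs to $L^q$ for every $q\ge 1$. I would first take expectations in the displayed inequality, use the martingale property on $[0,\tau_k]$ to kill the stochastic integral, and invoke monotone convergence in $k$ and $N$ to get $\E[A^{u^*(v)}_T]\le C(1+\E[B^v_T])$. The $L^2$-upgrade, which is the anticipated main obstacle, is then obtained by squaring the inequality, combining the monotone upper bound on each switching cost with the $L^2$-control of $\Xi_{N,k}$ and of $B^v_T$ to rewrite the squared sum of switching costs in a way whose $L^2$-norm is independent of the quadratic variation $\int_0^{\tau_k}|Z^i|^2\, dr$; the domination by the terminal values of $\bar g,\underline g$ provided by [H4] is what allows the stochastic-integral term to be handled without the missing $\H2d$-integrability of $Z^i$. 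Fatou's lemma then yields $u^*(v)\in\cA$, and a symmetric argument gives $v^*(u)\in\cB$.

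Once square-integrability of the responses is in place, the two-sided inequality $J_i(\theta(u,v^*(u)))\le Y^i_0\le J_i(\theta(u^*(v),v))$ for $(u,v)\in\cA\times\cB$ is recovered by dominated convergence in the localized identity (the dominating envelope being the $L^2$-bound just obtained, together with $h^i(X^{0,x}_T)\in L^2$, $\int_0^T|f^{\theta_r}|dr\in L^2$ and $\sup_{s\le T}|Y^i_s|\in L^2$), and the saddle equality $Y^i_0=\sup_{u\in\cA}\inf_{v\in\cB}J_i(\theta(u,v))=\inf_{v\in\cB}\sup_{u\in\cA}J_i(\theta(u,v))$ is then deduced exactly as at the end of the proof of Theorem \ref{thm36}.
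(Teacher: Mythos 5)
Your strategy --- localize the stochastic integral at the stopping times $\tau_k$ and rerun the optimal-response argument of Proposition \ref{u*v} --- breaks down precisely at the step you yourself flag as the main obstacle, the $L^2$-upgrade. After localization your inequality reads
\[
A^{u^*(v)}_{\rho_N\wedge\tau_k}\le \Xi_{N,k}+B^v_{\rho_N\wedge\tau_k}-\int_0^{\rho_N\wedge\tau_k}Z^{\theta(u^*(v),v)_r}_r\,dB_r-Y^i_0,
\]
and squaring it forces you to control $\E[(\int_0^{\rho_N\wedge\tau_k}Z^{\theta_r}_r dB_r)^2]=\E[\int_0^{\rho_N\wedge\tau_k}|Z^{\theta_r}_r|^2dr]$, which is bounded only by $k$ and blows up as $k\to\infty$; when $Z^i\notin\mathcal{H}^{2,d}$ the running supremum $\sup_{s\le T}|\int_0^sZ^{\theta_r}_rdB_r|$ need not even be integrable, so no $k$-uniform $L^2$ bound on the right-hand side exists. [H4] cannot repair this: the pathwise domination of each individual switching cost by its value at $T$ only gives $A^{u^*(v)}_T\le N^\infty\max_{\ig}\underline g_{i,i+1}(T,X^{0,x}_T)$ with $N^\infty$ the total (a.s.\ finite but moment-uncontrolled) number of switches, hence no $L^2$ bound on the accumulated cost, and it says nothing about the stochastic-integral term. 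Your preliminary expectation step is fine, but it only reproves integrability of the responses --- which the paper obtains in Theorem \ref{value sans [H4]} without [H4] at all --- and does not yield $u^*(v)\in\cA$.

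The paper circumvents this entirely by stopping the system rather than the iteration: with $\gamma_k$ as in \eqref{gammak}, the processes $\bar Y^i_s=Y^i_{s\wedge\gamma_k}$, $\bar Z^i_s=Z^i_s\ind_{\{s\le\gamma_k\}}$, $\bar K^{i,\pm}_s=K^{i,\pm}_{s\wedge\gamma_k}$ solve a genuine doubly reflected system with $\bar Z^i\in\mathcal{H}^{2,d}$, terminal value $Y^i_{\gamma_k}$ and driver $\ind_{\{r\le\gamma_k\}}f^i$. The only place [H4] enters is in checking that the barrier constraints $\bar Y^{i+1}_s-\underline g_{i,i+1}(s,X^{0,x}_s)\le\bar Y^i_s\le \bar Y^{i+1}_s+\bar g_{i,i+1}(s,X^{0,x}_s)$ persist for $s>\gamma_k$, where $\bar Y^i$ is frozen at $Y^i_{\gamma_k}$ while the costs keep growing --- a completely different use of the monotonicity than the one you propose. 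Theorem \ref{thm36} applied to this stopped system identifies $Y^i_0$ as the value of a modified game $J^k_i$, and one concludes by showing $\sup_{(u,v)\in\cA\times\cB}|J_i(\tuv)-J^k_i(\tuv)|\to0$ as $k\to\infty$; no square-integrability of any optimal response to the original system is ever asserted or needed. You would need to adopt this (or an equivalent) device; as written, the key step of your proof does not go through.
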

\begin{proof}
First recall the processes $(Y^i,Z^i,K^{i,\pm})_{i\in \G}$ that satisfy: For any $\ig$ and $s\leq T$,
\begin{equation}\label{yigameox}
\left\lbrace
\begin{array}{l}
Y^i \in \sd ; K^{i,\pm} \in \ad \mbox{ and }Z^i \in \hd; \\ 
Y_s^i=h^i(X_T^{\ox})+\displaystyle\int_s^Tf^i(r,X_r^{\ox})dr-\int_s^TZ_rdB_r+K_T^{i,+}-K_s^{i,+}-(K_T^{i,-}-K_s^{i,-});\\
L^i(\vec Y)_s\leq Y_s^i\leq U^i(\vec Y)_s;\\
\int_0^T (Y^i_s-L^i(\vec Y)_s)dK_s^{i,+}=0\;\mbox{and} \; \int_0^T
(Y^i_s-U^i(\vec Y)_s)dK_s^{i,-}=0
\end{array}
\right.
\end{equation}where for $s\le T$, $L^i(\vec
Y)_s:=Y^{i+1}_s-\underline{g}_{\iplus1}(s,X_s^{\ox})$ and $U^i(\vec
Y)_s:=Y^{i+1}_s+\bar{g}_{\iplus1}(s,X_s^{\ox})$.

Next for any $k\geq 0$, let us define the following stopping time:
\begin{align}\lb{gammak}
\gamma_k:=\inf\lbrace s\geq0, \int_0^s \{\sum_{i=1,m}|Z_r^i|^2\}dr\geq k \rbrace\wedge T.
\end{align}
First note that the sequence $(\gamma_k)_{k\ge 1}$ is increasing, of stationnary type and converges to $T$. Next we have $\int_0^{\gamma_k}|Z_r^i|^2dr\le k$,  which means that the processes $(Z_s^i\ind_{\{s\le \gamma_k\}})_{s\le T}$ belong to ${\cal H}^{2,d}$. Let us now define $(\bar Y^i,\bar Z^i,\bar K^{i,\pm})_{i\in \G}$ as follows: For all $i\in \G$ and $s\le T$, 
\begin{align}\lb{paraloc}
\bar{Y}_s^i:={Y}_{s\wedge \gamma_k}^i, 
\bar{Z}_s^i=Z_s^i\ind_{\{s\le \gamma_k\}},
\bar{K}_s^{i,+}:={K}^{i,+}_{s\wedge \gamma_k} \mx{ and }
\bar{K}_s^{i,-}:={K}^{i,-}_{s\wedge \gamma_k}.
\end{align}
Thus the family $\left( \bar{Y}^i,\bar{Z}^i,\bar{K}^{i,+},\bar{K}^{i,-}\right)_{i\in\G}$ is the solution of the following system: $\forall i\in\G,$
\begin{equation}\label{croig1}
\left\lbrace
\begin{array}{l}
\mx{i)} \,\,\bar Y^i\in\mathcal{S}^2, \bar Z^i\in\mathcal{H}^{2,d}, \bar K^{i,\pm}\in\ad ;\\
\mx{ii)} \,\, \bar{Y}_s^i={Y}_{\gamma_k}^i+\displaystyle\int_s^T 1_{(r\leq \gamma_k)}f^i(r,X_r^{\ox})dr-\int_s^T \bar{Z}_r^idB_r+\bar{K}_{T}^{i,+}-\bar{K}_s^{i,+}-(\bar{K}_{T}^{i,-}-\bar{K}_s^{i,-}), \fr s\le T;\\
\mx{iii)} \,\,\bar{Y}_s^{i+1}-{\underline{g}}_{\iplus1}(s,X_s^{\ox})\leq \bar{Y}_s^i \leq \bar{Y}_s^{i+1}+{\bar{g}}_{\iplus1}(s,X_s^{\ox}), \fr s\le T;\\
\mx{iv)} \,\,\int_0^T \left( \bar{Y}_s^i-{L}^i(\vec{\bar{Y}})_s\right)d\bar K_s^{i,+}=0\mx{ and }
\int_0^T \left( \bar{Y}_s^i-{U}^i(\vec{\bar{Y}})_s\right)d\bar{K}_s^{i,-}=0
\end{array}
\right.
\end{equation}
where ${U}^i(\vec{\bar{Y}})$ and ${L}^i(\vec{\bar{Y}})$ are defined as in \eqref{yigameox}. Let us amphazise that here we need the assumption [H4] to show the inequalities in point iii) which actually hold true. Indeed for $s\le \gamma_k$, the inequalities hold true by the definition of the processes $\left( \bar{Y}^i,\bar{Z}^i,\bar{K}^{i,+},\bar{K}^{i,-}\right)_{i\in\G}$  and \eqref{yigameox}. If $s>\gamma_k$, by [H4] we have, 
\begin{align*}
&\bar{Y}_s^{i+1}-{\underline{g}}_{\iplus1}(s,X_s^{\ox})=
{Y}_{\gamma_k}^{i+1}-{\underline{g}}_{\iplus1}(s,X_s^{\ox})
\le {Y}_{\gamma_k}^{i+1}-{\underline{g}}_{\iplus1}({\gamma_k},X_{\gamma_k}^{\ox})\\&
\qq\qq\qq \leq {Y}_{\gamma_k}^{i}=\bar{Y}_s^i\leq {Y}_{\gamma_k}^{i+1}+{\bar{g}}_{\iplus1}(s,X_s^{\ox})=\bar{Y}_{s}^{i+1}+{\bar{g}}_{\iplus1}(s,X_s^{\ox}).
\end{align*}
On the other hand, by definition of $\bar K^{\pm,i}$ and $\bar Y^{i}$, $\ig$, we have 
$$\begin{array}{l}
\int_0^T \left( \bar{Y}_s^i-{L}^i(\vec{\bar{Y}})_s\right)d\bar K_s^{i,+}=
\int_0^{\gamma_k}({Y}_s^i-{L}^i(\vec{{Y}})_s)dK_s^{i,+}=0.
\end{array}
$$

Similarly we have also $ 
\int_0^T ( \bar{Y}_s^i-{U}^i(\vec{\bar{Y}})_s)d\bar{K}_s^{i,-}=0$. Therefore the processes $(\bar Y^i,\bar Z^i,\bar K^{i,\pm})_{i\in \G}$ verify \eqref{croig1}.

Now using the result of Theorem \ref{thm36}, we obtain: For any $\ig$, 
$$
Y^i_0=\bar Y^i_0=\sup_{u\in \ca}\inf_{v\in \cb}J_i^k(\theta(u,v))=\inf_{v\in \cb}\sup_{u\in \ca}J_i^k(\theta(u,v)).
$$
with
$$
J_i^k(\theta(u,v))=\mathbb{E}\left[ Y_{\gamma_k}^{\theta(u,v)_T}+\int_0^T1_{(r\leq \gamma_k)}
f^{\theta(u,v)_r}(r,X_r^{\ox}) dr -C_\infty^{\theta(u,v)}\right]
$$
where $\tuv$ is the coupling of the pair $(u,v)$ of controls and 
$C^{\theta(u,v)}_\infty:=\lim_{n\rightarrow \infty}C^{u,v}_N.$
Next let us set:
$$
\breve Y^i_0=\sup_{u\in \ca}\inf_{v\in \cb}J_i(\theta(u,v))\mx{ and }\ \tilde Y^i_0=\inf_{v\in \cb}\sup_{u\in \ca}J_i(\theta(u,v)).
$$Therefore 
\begin{align*}
|\breve Y^i_0-Y^i_0|=&|\sup_{u\in \ca}\inf_{v\in \cb}J_i(\theta(u,v))-\sup_{u\in \ca}\inf_{v\in \cb}J_i^k(\theta(u,v))|\\&\le \sup_{(u,v)\in \ca \times \cb}\E[|Y_{\gamma_k}^{\theta(u,v)_T}-h^{\theta(u,v)_T}(X_T^{\ox})|\\&\qq\qq\qq+\int_0^T|1_{(r\leq \gamma_k)}
f^{\theta(u,v)_r}(r,X_r^{\ox}) dr-f^{\theta(u,v)_r}(r,X_r^{\ox})|dr]
\\&\le \E[\sum_{i=1,m}|Y_{\gamma_k}^{i}-h^{i}(X_T^{\ox})|+
\int_{\gamma_k}^T\sum_{i=1,m}|
f^{i}(r,X_r^{\ox})|dr].
\end{align*}
But the right-hand side converges to $0$ as $k\rightarrow \infty$. Therefore 
$$
\breve Y^i_0=Y^i_0=\sup_{u\in \ca}\inf_{v\in \cb}J_i(\theta(u,v)).$$
In the same way we obtain also that 
$$
\tilde Y^i_0=Y^i_0=\inf_{v\in \cb}\sup_{u\in \ca}J_i(\theta(u,v)).$$
It follows that 
$$
Y^i_0=\sup_{u\in \ca}\inf_{v\in \cb}J_i(\theta(u,v))=\inf_{v\in \cb}\sup_{u\in \ca}J_i(\theta(u,v)).$$
Thus the zero-sum switching game has a value on square integrable controls which is equal to $Y^i_0$.\end{proof}
\subsection{Value of the zerosum switching game on integrable admissible controls}
In this part, we are not going to assume the square integrability of $\z$ neither [H4] and show that the relation \eqref{valint} holds true and this common value is equal to $Y^i_0$, where $(Y^i,Z^i,K^{i,\pm})_{i\in \G}$ is the solution of system \eqref{yigame}. Actually we have the following result:
\begin{theorem}\label{value sans [H4]}Assume [H1], [H2] and [H3]. Then for any $\ig$, 
\begin{equation}\lb{valintthm42}
Y^i_0=\inf_{v\in  \cbi}\sup_{u\in  \cai}J_i(\tuv)=\sup_{u\in  \cai }\inf_{v\in  \cbi}J_i(\tuv).
\end{equation}
\end{theorem}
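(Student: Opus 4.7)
The plan is to mimic the structure of Theorems \ref{thm36} and \ref{game-value-without-z}, but replacing square integrability by mere integrability throughout, and to use the localization by $\gamma_k$ from \eqref{gammak} to tame the stochastic integrals, which are only local martingales since [H4] is not available to transfer the barrier structure to the stopped system.

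First I would redo Proposition \ref{u*v} in the integrable setting, showing that for any $v\in\cbi$ the optimal response $u^*(v)$ satisfies $u^*(v)\in\cai$, and symmetrically $v^*(u)\in\cbi$ when $u\in\cai$. Following the iteration leading to \eqref{eq:321} up to time $\rho_N\wedge\gamma_k$ and rearranging, one obtains the pointwise bound
\begin{align*}
A^{u^*(v)}_{\rho_N\wedge\gamma_k}&\le\max_{i\in\Gamma}|h^i(X^{0,x}_T)|+\max_{i\in\Gamma}\sup_{s\le T}|Y^i_s|+\int_0^T\sum_{i\in\Gamma}|f^i(r,X^{0,x}_r)|dr\\
&\qquad+\Big|\int_0^{\rho_N\wedge\gamma_k}Z^{\theta(u^*(v),v)_r}_r\, dB_r\Big|+|Y^i_0|+B^v_T.
\end{align*}
Taking expectation kills the stochastic integral, which is a true martingale on $[0,\gamma_k]$, and, using $v\in\cbi$ together with the integrability of $\max_i\sup|Y^i|$ and of $h^i(X_T)$, yields $\E[A^{u^*(v)}_{\rho_N\wedge\gamma_k}]\le C$ uniformly in $N$ and $k$. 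Sending $k\to\infty$ then $N\to\infty$ and applying Fatou gives $\E[A^{u^*(v)}_T]<\infty$, so $u^*(v)\in\cai$; the statement $v^*(u)\in\cbi$ is obtained analogously.

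Next, taking expectation in the analogue of \eqref{eq:321} at $\rho_N\wedge\gamma_k$ and letting $k\to\infty$ by dominated convergence (legitimate because each integrand is bounded by the $L^1$ right-hand side above), then $N\to\infty$ using $\rho_N\to T$ a.s. together with $|C^{\theta(u^*(v),v)}_N|\le A^{u^*(v)}_T+B^v_T\in L^1(d\p)$, I would deduce
$$Y^i_0\le J_i(\theta(u^*(v),v)),\qquad\forall v\in\cbi,$$
and by a symmetric argument $J_i(\theta(u,v^*(u)))\le Y^i_0$ for every $u\in\cai$. Exactly as in the proof of Theorem \ref{thm36}, these bracketing inequalities combined with $u^*(v)\in\cai$ and $v^*(u)\in\cbi$ force
$$\inf_{v\in\cbi}\sup_{u\in\cai}J_i(\theta(u,v))\le Y^i_0\le\sup_{u\in\cai}\inf_{v\in\cbi}J_i(\theta(u,v)),$$
which, together with the trivial reverse inequality, yields \eqref{valintthm42}.

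The main technical hurdle is justifying the double limit $k\to\infty$, $N\to\infty$ in the absence of square integrability of the $Z^i$: a uniform $L^1$ bound must be secured before removing the localization, which is exactly why one first upgrades the optimal responses to $\cai$, $\cbi$ and only afterwards invokes dominated convergence at the level of the game payoff.
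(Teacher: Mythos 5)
Your proposal follows essentially the same route as the paper: localize with the stopping times $\gamma_k$ of \eqref{gammak} so that the stopped stochastic integrals become genuine martingales, derive a uniform $L^1$ bound on $A^{u^*(v)}$ yielding $u^*(v)\in\cai$ and $v^*(u)\in\cbi$ via Fatou's lemma (in $k$ then $N$), pass to the limit in the payoff, and conclude with the bracketing argument of Theorem \ref{thm36}. The one caveat is that the expectation must be taken in the signed inequality so that $\E[\int_0^{\rho_N\wedge\gamma_k}Z^{\theta(u^*(v),v)_r}_r\,dB_r]=0$, not after inserting the absolute value around the stochastic integral as in your displayed bound (whose expectation is only controlled by a constant times $\sqrt{k}$, which would ruin uniformity in $k$); your surrounding text shows this is what you intend, and it is exactly what the paper does.
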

\begin{proof}Let $u=(\s_n)_{n\ge 0}$ and $v=(\t_n)_{n\ge 0}$ be two admissible controls which belong to $\cai$ and $\cbi$ respectively. 
Next recall the optimal responses $u^*(v)=(\bar \s_n)_{n\ge 0}$ and $v^*(u)=(\bar \t_n)_{n\ge 0}$ defined in \eqref{v*u4} and \eqref{v*u3} respectively. First note that, as shown in Proposition \ref{u*v}, the controls $u^*(v)$ and $v^*(u)$ are admissible. Let us now show $u^*(v)$ belongs to $\cai$. A similar procedure will show that $v^*(u)$ belongs to $\cbi$. 

Indeed for $k\ge 1$, recall the stopping time $\gamma_k$ defined in \eqref{gammak} and the sequences $(\rho_n)_{n\ge 0}$ and $(\theta_n)_{n\ge 0}$ defined in \eqref{u*v}. Next for $k\ge 1$, let us define: $\fr$ $n\ge 0$,
$$\rho_n^k=\rho_n 1_{\{\rho_n< \gamma_k\}}+T1_{\{\rho_n\ge \gamma_k\}} \mx{ and }\theta_n^k=\theta_n1_{\{\rho_n< \gamma_k\}}+\theta_{n_k}1_{\{\rho_n\ge \gamma_k\}}
$$where $n_k=\inf\{n\ge 0, \rho_n\geq \gamma_k\}-1$. Note that $\rho_n^k$ is a stopping time and $\{\rho_n^k<T\}=\{\rho_n<\gamma_k\}$. The sequences 
$(\rho_n^k)_{n\ge 0}$ and $(\theta_n^k)_{n\ge 0}$ constitute the fact that we freeze the actions of the controllers when $\gamma_k$ is reached. Next going back to the system of equations \eqref{yigame} satisfied by the family $\left({Y}^i,{Z}^i,{K}^{i,+},{K}^{i,-}\right)_{\ig}$ and as in \eqref{u*v2} we have: 
\begin{align}\label{u*v1}
Y_0^i&=h^i(X_T^{0,x})1_{(\rho^k_1=T)}+Y^i_{\rho^k_1}1_{(\rho^k_1<T)}+\int_0^{\rho^k_1}f^i(r,X_r^{0,x})dr-\int_0^{\rho^k_1}Z_r^idB_r+\underbrace{\int_0^{\rho^k_1}dK_r^{i,+}}_{=0}-\int_0^{\rho^k_1}dK_r^{i,-}\nn\\
&\le h^i(X_T^{0,x})1_{(\rho^k_1=T)}+Y^i_{\rho^k_1}1_{(\rho^k_1<T)}+\int_0^{\rho^k_1}f^i(r,X_r^{0,x})dr-\int_0^{\rho^k_1}Z_r^idB_r.
\end{align}
But $\{\rho^k_1<T\}=\{\rho_1<\gamma_k\}$. Therefore $$
Y^i_{\rho^k_1}1_{(\rho^k_1<T)}=
Y^i_{\rho_1}1_{(\rho_1<\gamma_k)}=(Y^i_{\bar \s_1}1_{\{\rho_1=\bar \s_1\}}+
Y^i_{\t_{\check s_1}}1_{\{\rho_1=\t_{\check s_1}\}})1_{(\rho_1<\gamma_k)}$$
and then 
\begin{align}\label{u*v1}
Y_0^i
&\le h^i(X_T^{0,x})1_{(\rho^k_1=T)}+(Y^i_{\bar \s_1}1_{\{\rho_1=\bar \s_1\}}+
Y^i_{\t_{\check s_1}}1_{\{\rho_1=\t_{\check s_1}\}})1_{(\rho_1<\gamma_k)}+\int_0^{\rho^k_1}f^i(r,X_r^{0,x})dr-\int_0^{\rho^k_1}Z_r^idB_r
\end{align}
But for any $s\in [0,T]$, $Y_s^i\leq Y_s^{i+1}+\bar{g}_{i,i+1}(s,X_s^{0,x})$ and \\
$$Y^i_{\bar \s_1}1_{\{\rho_1=\bar \s_1\}}1_{(\rho_1<\gamma_k)}=
(Y^{i+1}_{\bar \s_1}-\underline g_{i,i+1}(\bar\s_1,X_{\bar\s_1}^{0,x}))1_{\{\rho_1=\bar \s_1\}}1_{(\rho_1<\gamma_k)}.
$$Plug now this in \eqref{u*v1} to obtain:
\begin{align}
Y_0^i	&\leq h^{i}(X_T^{0,x})1_{(\rho^k_1=T)}+
(Y^{i+1}_{\bar \s_1}-\underline g_{i,i+1}(\bar\s_1,X_{\bar\s_1}^{0,x}))1_{\{\rho_1=\bar \s_1\}}
1_{(\rho_1<\gamma_k)}\nn\\&+
(Y_{\t_{\check s_1}}^{i+1}+\bar{g}_{i,i+1}({\t_{\check s_1}},X_{\t_{\check s_1}}^{0,x}))
1_{\{\rho_1=\t_{\check s_1}\}}1_{(\rho_1<\gamma_k)}
+\int_0^{\rho^k_1}f^{\theta_0}(r,X_r^{0,x})dr-\int_0^{\rho^k_1}Z_r^{\theta_0}dB_r.\label{u*v2}
\end{align}
As  
$$
(Y^{i+1}_{\bar \s_1}1_{\{\rho_1=\bar \s_1\}}
+
Y_{\t_{\check s_1}}^{i+1}
1_{\{\rho_1=\t_{\check s_1}\}})1_{(\rho_1<\gamma_k)}=Y^{\theta_1^k}_{\rho_1^k}1_{(\rho^k_1<\gamma_k)}
$$
and \begin{align}
(-\underline g_{i,i+1}(\bar\s_1,X_{\bar\s_1}^{0,x})1_{\{\rho_1=\bar \s_1\}}
&+
\bar{g}_{i,i+1}({\t_{\check s_1}},X_{\t_{\check s_1}}^{0,x})
1_{\{\rho_1=\t_{\check s_1}\}})1_{(\rho_1<\gamma_k)}\nn\\&=
(-\underline g_{\theta_0,\theta_1^k}(\bar\s_1,X_{\bar\s_1}^{0,x})1_{\{\rho^k_1=\bar \s_1\}}
+
\bar{g}_{\theta_0,\theta_1^k}({\t_{\check s_1}},X_{\t_{\check s_1}}^{0,x})
1_{\{\rho^k_1=\t_{\check s_1}\}})1_{(\rho^k_1<\gamma_k)}\nn
\end{align}
then from \eqref{u*v2}, we obtain:
\begin{align}
Y_0^i	&\leq h^{\theta_0}(X_T^{0,x})1_{(\rho^k_1=T)}+
Y^{\theta_1^k}_{\rho_1^k}1_{(\rho^k_1<\gamma_k)}+
(-\underline g_{\theta_0,\theta_1^k}(\bar\s_1,X_{\bar\s_1}^{0,x})1_{\{\rho^k_1=\bar \s_1\}}
+
\bar{g}_{\theta_0,\theta_1^k}({\t_{\check s_1}},X_{\t_{\check s_1}}^{0,x})
1_{\{\rho^k_1=\t_{\check s_1}\}})1_{(\rho^k_1<\gamma_k)}\nn\\&
+\int_0^{\rho^k_1}f^{\theta_0}(r,X_r^{0,x})dr-\int_0^{\rho^k_1}Z_r^{\theta_0}dB_r.\label{u*v2}
\end{align}
But we can do the same with $Y^{\theta_1^k}_{\rho_1^k}1_{(\rho^k_1<\gamma_k)}$ to obtain:
\begin{align}
Y^{\theta_1^k}_{\rho_1^k}1_{(\rho^k_1<\gamma_k)}	&\leq h^{\theta_2^k}(X_T^{0,x})1_{(\rho^k_1<\gamma_k,\rho^k_2=T)}+
Y^{\theta_2^k}_{\rho_2^k}1_{(\rho^k_2<\gamma_k)}+\nn\\&
\qq\qq (-\underline g_{\theta_1^k,\theta_2^k}(\bar\s_2,X_{\bar\s_2}^{0,x})1_{\{\rho^k_2=\bar \s_2\}}
+
\bar{g}_{\theta_1^k,\theta_2^k}({\t_{\check s_2}},X_{\t_{\check s_2}}^{0,x})
1_{\{\rho^k_2=\t_{\check s_2}\}})1_{(\rho^k_2<\gamma_k)}\nn\\&
\qq\qq +\int_{\rho^k_1}^{\rho^k_2}f^{\theta_2^k}(r,X_r^{0,x})dr-\int_{\rho^k_1}^{\rho^k_2}Z_r^{\theta_2^k}dB_r.\label{u*v5}
\end{align}
Plug now \eqref{u*v5} in \eqref{u*v2} and repeat this procedure $N$ times to obtain:
\begin{align}\label{eq:321}
Y_0^i &\leq \displaystyle\sum_{n=1}^N h^{\theta^k_{n-1}}(X_T^{0,x})1_{(\rho^k_{n-1}<T,\rho^k_n=T)}+Y_{\rho^k_N}^{\theta^k_N}1_{(\rho^k_N<\gamma_k)}+\int_0^{\rho^k_N}f^{\theta(u^*(v),v)_r}(r,X_r^{0,x})dr\nonumber\\
&-\int_0^{\rho^k_N}Z_r^{\theta(u^*(v),v)_r}dB_r\underbrace{-\sum_{n=1}^N\left[ \underline{g}_{\theta^k_{n-1}\theta^k_n}(\rho^k_n,X_{\rho^k_n}^{0,x})1_{(\rho^k_n=\bar{\sigma}_n<\gamma_k)}-\bar{g}_{\theta^k_{n-1}\theta^k_n}(\rho^k_n,X_{\rho^k_n}^{0,x})1_{(\rho^k_n=\tau_{\check s_n}<\gamma_k)} \right]}_{A_{\rho^k_N}^{u^*(v)}-\tilde B^v_{\rho^k_N}}\end{align}
where 
$0\le \tilde B^v_{\rho^k_N}\le B^v_{\rho^k_N}$, since $C_1$ has priority when the two players decide to switch at the same time. Then take expectation in both hand-sides to obtain: \begin{align}\lb{eq336}
\E[A_{\rho^k_N}^{u^*(v)}]\le& -Y^i_0+\E[\displaystyle\sum_{n=1}^N h^{\theta^k_{n-1}}(X_T^{0,x})1_{(\rho^k_{n-1}<T,\rho^k_n=T)}+Y_{\rho^k_N}^{\theta^k_N}1_{(\rho^k_N<\gamma_k)}+\int_0^{\rho^k_N}f^{\theta(u^*(v),v)_r}(r,X_r^{0,x})dr+B^v_{\rho^k_N}].\end{align}
As $v\in \cbi$, then $\E[B^v_{\rho^k_N}]\le \E[B^v_{T}]$ and then the right hand side of \eqref{eq336} is bounded. Therefore there exists a constant $C$ such that 
$$
\E[A_{\rho^k_N}^{u^*(v)}]\le C+ \E[B^v_{T}].
$$
Finally by using twice Fatou's Lemma (w.r.t $k$ then $N$) we deduce that $
\E[A_T^{u^*(v)}]<\infty$ which is the claim.
\ms

\nd iii) Let $v\in \cbi$. Going back to \eqref{eq:321}, take expectation to obtain: 
\begin{align}\label{eq:321x}
Y_0^i &\leq \E\{\displaystyle\sum_{n=1}^N h^{\theta^k_{n-1}}(X_T^{0,x})1_{(\rho^k_{n-1}<T,\rho^k_n=T)}+Y_{\rho^k_N}^{\theta^k_N}1_{(\rho^k_N<\gamma_k)}+\int_0^{\rho^k_N}f^{\theta(u^*(v),v)_r}(r,X_r^{0,x})dr\nonumber\\
&\qq\qq-\sum_{n=1}^N\left[ \underline{g}_{\theta^k_{n-1}\theta^k_n}(\rho^k_n,X_{\rho^k_n}^{0,x})1_{(\rho^k_n=\bar{\sigma}_n<\gamma_k)}-\bar{g}_{\theta^k_{n-1}\theta^k_n}(\rho^k_n,X_{\rho^k_n}^{0,x})1_{(\rho^k_n=\tau_{\check s_n}<\gamma_k)} \right]\}.\end{align}
By taking the limit w.r.t $k$ then $N$ we obtain that 
$$
Y^i_0\leq J_i(u^*(v),v), \forall v\in \cbi.$$
In the same way as previously, for any $u\in \cai$, $v^*(u)$ belongs to $\cbi$ and 
$$Y^i_0\geq J_i(u,v^*(u)).$$
It follows that for any $u\in \cai$ and $v\in \cbi$,
$$
J_i(u,v^*(u))\le Y^i_0\leq J_i(u^*(v),v).
$$ 
Therefore
$$
\sup_{u\in \cai}J_i(u,v^*(u))\le Y^i_0\leq \inf_{v\in \cbi}J_i(u^*(v),v).
$$
As $u^*(v)$ (resp. $v^*(u)$) belongs
to $\cai$ (resp. $\cbi$) when $v\in \cbi$ (resp. $u\in \cai$), then  
$$
\underbrace{\inf_{v\in \cbi}\sup_{u\in \cai}J_i(u,v)}_{V^+}\le \sup_{u\in \cai}J_i(u,v^*(u))\le Y^i_0\leq \inf_{v\in \cbi}J_i(u^*(v),v)\le \underbrace{\sup_{u\in \cai}\inf_{v\in \cbi}J_i(u,v)}_{V^-}
$$
and the claim is proved since $V^+\ge V^-$. 
\end{proof}
\begin{remark}\label{rmq_value}a) As in Remark \ref{jeuetendu} we have also the following equalities: For any $\ig$, 
\begin{align*}
Y_0^i&=\inf_{v\in \mbb^{(1)} }\sup_{u\in \ca^{(1)}}J_i\left(\theta(u,v(u))\right)=\sup_{u\in\mbaun }\inf_{v\in \cb^{(1)}}J_i\left( \theta(u(v),v) \right).
\end{align*}
b) Let $(Y^{i,\tx},Z^{i,\tx},K^{i,\pm,\tx})_{\ig}$ be the $\mathcal{P}$-measurable processes solution of the system \eqref{yigame}. Then, as previously one can show that for any $\ig$ and $\stt$,  
$$
Y^{i,\tx}_s=\essinf_{v\in  \cbi_s}\esssup_{u\in  \cai_s}J^{t,x}(\tuv)_s=\esssup_{u\in  \cai_s}\essinf_{v\in  \cbi_s}J^{t,x}(\tuv)_s
$$
where 
$$\begin{array}{c}
J^{t,x}(\tuv)_s:=\E\{ h^{\theta(u,v)_T}(X_T^{\tx})+\int_s^T
f^{\theta(u,v)_r}(r,X_r^{\tx}) dr -C_\infty^{\theta(u,v)}|\mathcal{F}_s\}\end{array}
$$and $\cai_s$ (resp. $\cbi_s$) is the set of admissible integrable controls which start from $i$ at $s$. \qed
\end{remark}
\section{System of PDEs of min-max type with interconnected obstacles}
We are going now to deal with the problem of existence and uniqueness of a solution in viscosity sense of
the following system of PDEs of min-max type with interconnected obstacles:
\begin{equation}\label{sysedpmilie1}
\left\lbrace
\begin{array}{l}
\min\{v^i(t,x)-L^i(\vec{v})(t,x);\max\left[v^i(t,x)-U^i(\vec{v})(t,x);\right.\\
\qquad\left.-\partial_tv^i(t,x)-\mathcal{L}^X(v^i)(t,x)-f^i(t,x,(v^l(t,x))_{l\in\Gamma},\sigma(t,x)^\top D_xv^i(t,x)\right]\}=0;\\
v^i(T,x)=h^i(x)
\end{array}
\right.
\end{equation} where for any $\ig$, 
$L^i(\vec{v})(t,x):=v^{i+1}(t,x)-\underline{g}_{\iplus1}(t,x)$ and $U^i(\vec{v})(t,x):=v^{i+1}(t,x)+\overline{g}_{\iplus1}(t,x)$. Note that $f^i$ is more general w.r.t. the HJB system of \eqref{sysedpintro} since it depends also on $\vec y$ and $z^i$.
\ms

\nd The result is given in Theorem \ref{thm43}  but its proof, based on Perron's method, is postponed to Appendix. Nonetheless in this section we will introduce some notions which we need also in Section 5 when we deal with system of RBSDEs \eqref{yigameintro} or more generally \eqref{exist}.
\ms

\nd For any locally bounded deterministic function $u:[0,T]\times\mathbb{R}^k\rightarrow \mathbb{R}$, we denote by $u_*$ (resp. $u^*$) the lower semi-continuous (lsc) (resp. upper semi-continuous (usc)) envelope  of $u$ as follows: $\forall(\tx)\in\0TR$, 
\[u_*(t,x)=\liminf_{(t',x')\mapsto (t,x),t'<T} u(t',x') \mx{ and }u^*(t,x)=\limsup_{(t',x')\mapsto(t,x),t'<T}u(t',x').\]
Next for an lsc (resp. usc) function $u$ we denote by $\bar{J}^-u(t,x)$ (resp. $\bar{J}^+u(t,x)$), the parabolic limiting subjet (resp. superjet) of $u$ at $(\tx)$ (see e.g. \cite{CIL92} for the definition and more details).
\begin{definition}: Viscosity solution to \eqref{sysedpmilie1} \label{visco}

\nd Let $\vec v:=(v^i)_{\ig}$ be a p-tuple of $\R$-valued, locally bounded functions defined on $\0TR$.\\

\nd A) We say that $\vec v$ is a viscosity supersolution (resp. subsolution) of \eqref{sysedpmilie1} if for any $\ig$:

(i) $v^i_*(T,x)\geq h^i(x)$ (resp. $v^{i*}(T,x)\leq h^i(x)$), for any $x\in \R^k$ ; 

(ii) For any $(\tx)\in [0,T)\times \R^k$ and for any 
$(p,q,M)\in\bar{J}^-{v^i_*}(\tx)$ (resp. $\bar{J}^+{v^{i*}}(\tx)$), we have:
\begin{equation}
\begin{array}{l}
\min\lbrace v^i_*(t,x)-L^i(\vec{v_*})(t,x), \\
\qquad \max\lbrace -p-b(\tx).q-\dfrac{1}{2}Tr[(\sigma\sigma^\top)(\tx)M]-f^i(t,x,\vec{v_*}(\tx),\sigma^\top(t,x) q); \\
\qquad v^i_*(\tx)-U^i(\vec{v_*})(\tx)\rbrace\rbrace\,\,\ge 0
\end{array}
\end{equation}
where $\vec{v_*}=(v^i_*)_{\ig}$ (resp. 
\begin{equation}
\begin{array}{l}
\min\lbrace v^{i*}(t,x)-L^i(\vec{v^*})(t,x), \\
\qquad \max\lbrace -p-b(\tx).q-\dfrac{1}{2}Tr[(\sigma\sigma^\top)(\tx)M]-f^i(t,x,\vec{v^*}(\tx),\sigma^\top(t,x) q); \\
\qquad v^{i*}(\tx)-U^i(\vec{v^*})(\tx)\rbrace\rbrace\,\,\le 0
\end{array}
\end{equation}
where $\vec{v^*}=(v^{i*})_{\ig}$).  
\ms 

\nd B) A locally bounded function $\vec{v}=(v^i)_{\ig}$ is called a viscosity solution of \eqref{sysedpmilie1} if $(v_*^i)_{\ig}$ and $(v^{i*})_{\ig}$ are viscosity supersolution and viscosity subsolution  of \eqref{sysedpmilie1} respectively. \qed
\end{definition}

Next $(\tx)$ be fixed and let us consider the following sequence of BSDEs: $\forall m,n \in \mathbb{N}$, $\frig$,\\
\begin{equation}
\label{ymn}
\left\lbrace
\begin{array}{l}
Y^{i,m,n} \in \mathcal{S}^2, Z^{i,m,n} \in \hdd;\\
Y_s^{i,m,n}=h^i(X_T^{\tx})+\int_s^T f^{i,m,n}(r,X_r^{\tx},(Y_r^{l,m,n})_{l \in \Gamma},Z_r^{i,m,n})dr -\int_s^T Z_r^{i,m,n}dB_r, \; s\leq T;\\
Y_T^{i,m,n}=h^i(X_T^{\tx})
\end{array}
\right.
\end{equation}
where $$f^{i,m,n}(s,X_s^{\tx},\vec{y},z)=f^i(s,X_s^{\tx},\vec{y},z)+n\left\lbrace y^i-[y^{i+1}-\underline{g}_{\iplus1}(s,X_s^{\tx})]\right\rbrace^- - m\left\lbrace y^i-[y^{i+1}+\overline{g}_{\iplus1}(s,X_s^{\tx})]\right\rbrace^+.$$
As \eqref{ymn} is a classical BSDE without obstacle, thanks to the results by Pardoux-Peng \cite{EPQ97}, the solution exists and is unique. In addition there exist deterministic functions $(v^{i,m,n})_{\ig}$ (see Theorem 4.1. in \cite{EPQ97}) such that:
\begin{equation}\label{marvmn}\frst,\,\,
Y_s^{i,m,n}=v^{i,m,n}(s,X_s^{\tx}).
\end{equation}
On the other hand, we have the following properties which we collect in the following proposition. 
\begin{proposition}[see \cite{HM13},\cite{DHM15}]\label{pena}
\nd Assume that [H2], [H3] and [H5] are fulfilled. Then we have:\\
\ms

\nd a) $\mathbb{P}-\text{a.s.}, \forall s\leq T,  Y^{i,m+1,n}_s \leq Y^{i,m,n}_s \leq Y^{i,m,n+1}_s, \; \forall \ig, n,m \geq 0$, which also implies the same property for  $(v^{i,m,n})_{\ig}$, i.e. for any $(\tx)\in\0TR, \ig,$
\begin{equation}\label{inegvnm}v^{i,m+1,n}(\tx)\leq v^{i,m,n}(\tx)\leq v^{i,m,n+1}(\tx).\end{equation}
b) The sequence $((Y^{i,m,n})_{\ig})_{n \geq 0})$ (resp. $((Y^{i,m,n})_{\ig})_{m \geq 0}$) converges in $(\mathcal{S}^2)^p$ to $(\bar{Y}^{i,m})_{\ig}$ (resp. $(\underline{Y}^{i,n})_{\ig}$) which verifies the following system of reflected RBSDEs:
\begin{equation} \label{ym}
\left\lbrace
\begin{array}{l}
\overline{Y}^{i,m} \in \mathcal{S}^2, \overline{Z}^{i,m} \in \mathcal{H}^2, \overline{K}^{i,m,+} \in \mathcal{A}^2;\\
\overline{Y}_s^{i,m}=h^i(X_T^{\tx})+\int_s^T \overline{f}^{i,m}(r,X_r^{\tx},(\overline{Y}_r^{l,m})_{l \in \Gamma},\overline{Z}_r^{i,m})dr -\int_s^T \overline{Z}_r^{i,m}dB_r+\overline{K}^{i,m,+}_T-\overline{K}^{i,m,+}_s, s\leq T;\\
\overline{Y}^{i,m}_s \geq L^i((\bar{Y}_s^{l,m})_{\l\in\Gamma}), s\leq T;\\
\int_0^T (\overline{Y}^{i,m}_s-L^i((\bar{Y}_s^{l,m})_{\l\in\Gamma}))d\overline{K}^{i,m,+}_s=0
\end{array}
\right.
\end{equation}
where $$\overline{f}^{i,m}(s,X_s^{\tx},\vec{y},z^i)=f^i(s,X_s^{\tx},\vec{y},z^i)-m(y^i-[y^{i+1}+\overline{g}_{\iplus1}(s,X_s^{\tx})])^+.$$ (resp.
\begin{equation}\label{yn}
\left\lbrace
\begin{array}{l}
\underline{Y}^{i,n} \in \mathcal{S}^2, \underline{Z}^{i,n} \in \mathcal{H}^2, \underline{K}^{i,n,-} \in \mathcal{A}^2;\\
\underline{Y}_s^{i,n}=h^i(X_T^{\tx})+\int_s^T \underline{f}^{i,n}(r,X_r^{\tx},(\underline{Y}_r^{l,n})_{l \in \Gamma},\underline{Z}_r^{i,n})dr -\int_s^T \underline{Z}_r^{i,n}dB_r+\underline{K}^{i,n,-}_T-\underline{K}^{i,n,-}_s, s\leq T;\\
\underline{Y}^{i,n}_s \leq U^i((\underline{Y}_s^{l,m})_{l\in\Gamma}), s\leq T;\\
\int_0^T (\underline{Y}^{i,m}_s-U^i((\underline{Y}_s^{l,m})_{l\in\Gamma}))d\underline{K}^{i,n,-}_s=0
\end{array}
\right.
\end{equation}
where $$\underline{f}^{i,n}(s,X_s^{\tx},\vec{y},z^i)=f^i(s,X_s^{\tx},\vec{y},z^i)+n(y^i-[y^{i+1}-\underline{g}_{\iplus1}(s,X_s^{\tx})])^+.$$\\

c) There exist deterministic continuous functions $(\overline{v}^{i,m})_{\ig}$ (resp. $(\underline{v}^{i,n})_{\ig} $) such that for any $(t,x)\in\0TR, s\in[t,T],$
\begin{equation}\label{ymvm}
\overline{Y}_s^{i,m}=\overline{v}^{i,m}(s,X_s^{\tx})
\end{equation}
(resp.
\begin{equation}\label{ynvn}
\underline{Y}_s^{i,n}=\underline{v}^{i,n}(s,X_s^{\tx})).
\end{equation}
In addition for any $\ig$, the sequence $((\overline{v}^{i,m})_{m\geq 0})_{\ig}$ (resp. $((\underline{v}^{i,n})_{n\geq 0})_{\ig} )$ are decreasing w.r.t. $m$ (resp. increasing w.r.t. $n$).\\

d) $(\overline{v}^{i,m})_{\ig}$ (resp. $(\underline{v}^{i,n})$) belong to class $\Pi_g$ and is the unique viscosity solution of following system of variational inequalities with a reflected obstacle:
\begin{equation}\label{vm}
\left\lbrace
\begin{array}{l}
\min\lbrace \overline{v}^{i,m}(t,x)-L^i((\bar{v}^{l,m})_{l\in\Gamma})(\tx);\\
\; -\partial_x\overline{v}^{i,m}(\tx)-\mathcal{L}^X(\overline{v}^{i,m})(\tx)-f^{i,m}(t,x,(\overline{v}^{l,m}(\tx))_{l\in\Gamma},\sigma(\tx)^\top D_x\overline{v}^{i,m}(\tx))\rbrace=0;\\
\overline{v}^{i,m}(T,x)=h^i(x).
\end{array}
\right.
\end{equation}
(resp.
\begin{equation}\label{vn}
\left\lbrace
\begin{array}{l}
\max\lbrace \underline{v}^{i,n}(t,x)-U^i((\underline{v}^{l,m})_{l\in\Gamma})(\tx);\\
\; -\partial_x\underline{v}^{i,n}(\tx)-\mathcal{L}^X(\underline{v}^{i,n})(\tx)-f^{i,n}(t,x,(\underline{v}^{l,n}(\tx))_{l\in\Gamma},\sigma(\tx)^\top D_x\underline{v}^{i,n}(\tx))\rbrace=0;\\
\underline{v}^{i,n}(T,x)=h^i(x)).
\end{array}
\right.
\end{equation}
\end{proposition}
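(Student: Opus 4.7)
The plan is to proceed in four stages, following the standard penalization scheme for reflected BSDEs adapted to systems with interconnected obstacles, as developed in \cite{HM13,DHM15}.

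For part (a), I would invoke the comparison theorem for systems of BSDEs whose drivers are Lipschitz and monotone in the off-diagonal $y$-components. The map $n\mapsto f^{i,m,n}(\tx,\vec y,z)$ is non-decreasing, since the term $+n(y^i-L^i)^-$ grows with $n$, while $m\mapsto f^{i,m,n}(\tx,\vec y,z)$ is non-increasing by the same token applied to $-m(y^i-U^i)^+$. Assumption [H5](b), which asks that $f^i$ itself be non-decreasing in each component $y^j$ for $j\ne i$, then ensures that the coupled comparison theorem applies componentwise, yielding $Y^{i,m+1,n}\le Y^{i,m,n}\le Y^{i,m,n+1}$ $\p$-a.s.\ for every $\ig$. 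The inequality \eqref{inegvnm} on the deterministic functions follows via \eqref{marvmn} by evaluating at $s=t$, where $X^{\tx}_t=x$ is deterministic.

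For part (b), focusing on the limit $n\to\infty$ at fixed $m$, I would first establish uniform (in $n$) $L^2$-estimates on $(Y^{i,m,n},Z^{i,m,n})$ and on the penalization process
$$K^{i,m,n,+}_s:=\int_0^s n\,\bigl\{Y^{i,m,n}_r-\bigl[Y^{i+1,m,n}_r-\underline{g}_{i,i+1}(r,X^{\tx}_r)\bigr]\bigr\}^{-}dr.$$
These bounds come from Itô's formula applied to $|Y^{i,m,n}|^2$, Young's inequality and a comparison of $Y^{i,m,n}$ with the solution of an auxiliary (unpenalized) BSDE dominating the system. Combined with the monotone pointwise limit $\bar Y^{i,m}:=\lim_n Y^{i,m,n}$ from (a) and dominated convergence, this yields strong convergence in $\mathcal{S}^2$. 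Extracting weak limits of $Z^{i,m,n}$ in $\mathcal{H}^{2,d}$ and of $K^{i,m,n,+}$ in $\mathcal{A}^2$ identifies $\bar Z^{i,m}$ and the continuous nondecreasing $\bar K^{i,m,+}$, and the Skorokhod minimality $\int_0^T(\bar Y^{i,m}_s-L^i(\vec{\bar Y}^m)_s)d\bar K^{i,m,+}_s=0$ follows from the classical argument of El Karoui et al., by passing to the limit in the identity $\int_0^T(Y^{i,m,n}_s-L^i(\vec Y^{\cdot,m,n})_s)^{-}dK^{i,m,n,+}_s=0$. The symmetric procedure produces $(\underline Y^{i,n},\underline Z^{i,n},\underline K^{i,n,-})$ as $m\to\infty$.

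For parts (c) and (d), the Markovian representation \eqref{ymvm} is obtained by defining $\bar v^{i,m}$ as the pointwise decreasing limit of the continuous functions $v^{i,m,n}$ and verifying that it is continuous and of polynomial growth. One passes to the limit $n\to\infty$ in the viscosity sub- and supersolution inequalities satisfied by $v^{i,m,n}$ for the penalized equation (consequences of the BSDE/PDE correspondence of \cite{EPQ97}), using standard stability results for viscosity solutions, to conclude that $\bar v^{i,m}$ solves \eqref{vm} in viscosity sense; polynomial growth is inherited from the data through the uniform $L^2$ bounds in (b) together with \eqref{State_3}. Uniqueness for \eqref{vm} follows from its comparison principle, which in turn yields continuity of $\bar v^{i,m}$ by sandwiching $(\bar v^{i,m})_*$ and $(\bar v^{i,m})^*$. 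The symmetric statements for $\underline v^{i,n}$ and \eqref{vn} are proved identically. The main obstacle will be the uniform-in-$n$ control of the penalization mass $K^{i,m,n,+}_T$, which underpins both the $\mathcal{S}^2$-limit and the Skorokhod condition; the difficulty is that the lower obstacle $Y^{i+1,m,n}-\underline g_{i,i+1}$ itself depends on $n$ through the whole system, and the usual remedy is to construct an auxiliary supersolution of the penalized system, majorising $Y^{i,m,n}$ uniformly in $n$. A parallel subtlety in (d) is that the comparison principle for \eqref{vm}, whose driver $f^{i,m}$ depends on the full vector $\vec v$, relies crucially on the monotonicity [H5](b), just as the BSDE comparison did in (a).
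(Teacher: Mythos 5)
The paper gives no proof of this proposition — it defers entirely to \cite{HM13} and \cite{DHM15} — and your sketch follows the same standard penalization route those references carry out (coupled comparison under [H5]-b) for part (a), uniform a priori estimates plus monotone limits for part (b), Feynman--Kac representation together with viscosity stability and the comparison principle for parts (c)--(d)), so it is consistent with the intended argument. One small correction: the identity you propose to pass to the limit in for the Skorokhod condition should involve the positive part, $\int_0^T(Y^{i,m,n}_s-L^i(\vec Y^{\cdot,m,n})_s)^{+}\,dK^{i,m,n,+}_s=0$, which holds because $dK^{i,m,n,+}_s=n(Y^{i,m,n}_s-L^i(\vec Y^{\cdot,m,n})_s)^-\,ds$ charges only the set where the negative part is positive; with the negative part as you wrote it, the integral equals $n\int_0^T[(Y^{i,m,n}_s-L^i(\vec Y^{\cdot,m,n})_s)^-]^2\,ds$, which is not zero.
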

\begin{proof}
The proofs can be found in \cite{HM13} and \cite{DHM15} so we omit them.
\end{proof}

Next for any $\ig$ and $\txsp$, we denote by $$\overline{v}^i(\tx):=\lim_{m\to\infty}\bar{v}^{i,m}(\tx) \mx{ and }\underline{v}^i(t,x):=\lim_{n\to \infty}\underline{v}^{i,n}.$$Then from \eqref{inegvnm} we deduce that for any $\txsp$ 
$$\underline{v}^i(\tx)\leq\overline{v}^i(\tx).$$
Note that since for any $\ig$, 
$$
\underline{v}^{i,0}\le \underline{v}^i\le \bar v^i\le \bar v^{i,0}
$$
then $\underline{v}^i$ and $\bar v^i$ belong to $\Pi_g$. Additionnaly we have: 
\begin{theorem}\lb{thm43}Assume [H2],[H3] and [H5]. Then the $p$-tuple of functions $(\overline{v}^i)_{\ig}$ are continuous, of polynomial growth and unique viscosity solution, in the class $\Pi_g$, of the following systems: 
$\forall i \in \Gamma$ and $(\tx)\in\0TR,$
\begin{equation}\label{sysedpmilieu}
\left\lbrace
\begin{array}{l}
\min\{v^i(t,x)-L^i(\vec{v})(t,x);\max\left[v^i(t,x)-U^i(\vec{v})(t,x);\right.\\
\qquad\left.-\partial_tv^i(t,x)-\mathcal{L}^X(v^i)(t,x)-f^i(t,x,(v^l(t,x))_{l\in\Gamma},\sigma(t,x)^\top D_xv^i(t,x)\right]\}=0;\\
v^i(T,x)=h^i(x).
\end{array}
\right.
\end{equation}
\end{theorem}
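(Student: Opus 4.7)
The overall strategy is Perron's method, using the penalization scheme of Proposition \ref{pena} to produce a sandwich consisting of a viscosity subsolution and a viscosity supersolution of the bilateral system \eqref{sysedpmilieu}.

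First, I would show that for each fixed $m_0\ge 0$ the continuous function $\bar v^{i,m_0}$ is a viscosity supersolution of the full bilateral system. Starting from the supersolution inequality for system \eqref{vm}, which reads (in terms of parabolic subjets) $\min\{\bar v^{i,m_0}-L^i(\vec{\bar v}^{m_0});\,-p-b\cdot q-\tfrac12\mathrm{Tr}[(\sigma\sigma^\top)M]-f^i(\cdot,\vec{\bar v}^{m_0},\sigma^\top q)+m_0(\bar v^{i,m_0}-U^i(\vec{\bar v}^{m_0}))^+\}\ge 0$, one checks that the lower-obstacle inequality $\bar v^{i,m_0}\ge L^i(\vec{\bar v}^{m_0})$ is preserved, and for the inner max either $\bar v^{i,m_0}\ge U^i(\vec{\bar v}^{m_0})$ (trivially supplying $\bar v^{i,m_0}-U^i\ge 0$) or $\bar v^{i,m_0}<U^i(\vec{\bar v}^{m_0})$, in which case the penalization term vanishes and the Hamiltonian inequality follows.

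Next, I would show that the decreasing pointwise limit $\bar v^i:=\lim_{m\to\infty}\bar v^{i,m}$, which is automatically upper semicontinuous as a decreasing limit of continuous functions and belongs to $\Pi_g$ by the sandwich $\underline v^{i,0}\le \bar v^i\le \bar v^{i,0}$, is a viscosity subsolution of the bilateral system. At a strict local maximum $(t_0,x_0)$ of $\bar v^i-\varphi$ for a smooth test $\varphi$, a standard Barles–Perthame half-relaxed argument supplies local max points $(t_m,x_m)\to(t_0,x_0)$ of $\bar v^{i,m}-\varphi$ with $\bar v^{i,m}(t_m,x_m)\to\bar v^i(t_0,x_0)$. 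Applying the subsolution inequality for \eqref{vm} at $(t_m,x_m)$, one extracts a subsequence along which either $\bar v^{i,m}-L^i(\vec{\bar v}^{m})\le 0$ (yielding $\bar v^i-L^i(\vec{\bar v})\le 0$ at $(t_0,x_0)$ in the limit) or the Hamiltonian term is nonpositive. In the latter case, dividing by $m$ forces $(\bar v^i-U^i(\vec{\bar v}))^+(t_0,x_0)=0$, so $\bar v^i\le U^i(\vec{\bar v})$; coupling this with the remaining inequality then gives either $\bar v^i=U^i(\vec{\bar v})$ at $(t_0,x_0)$, or $-\partial_t\varphi-\mathcal{L}^X\varphi-f^i(\cdot,\vec{\bar v},\sigma^\top D_x\varphi)\le 0$, each of which makes the $\min$–$\max$ quantity nonpositive.

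With the subsolution $\bar v^i$ and the supersolution $\bar v^{i,m_0}$ at hand (satisfying $\bar v^i\le \bar v^{i,m_0}$ by monotonicity), I would then invoke the comparison principle for bilateral interconnected systems of this form: any polynomial-growth usc subsolution is dominated by any polynomial-growth lsc supersolution whenever the terminal inequality holds. Given comparison, Perron's construction yields a viscosity solution $w^i$ sandwiched between $\bar v^i$ and $\bar v^{i,m_0}$; comparison applied to the usc and lsc envelopes of $w^i$ forces them to coincide, hence $w^i$ is continuous. Since $m_0$ is arbitrary and $\bar v^{i,m_0}\searrow\bar v^i$, uniqueness identifies $w^i=\bar v^i$, proving that $(\bar v^i)_{\ig}$ is the unique polynomial-growth continuous viscosity solution of \eqref{sysedpmilieu}.

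The main obstacle, and the reason the full proof is postponed to the appendix, is the comparison principle itself: because each pair of obstacles $L^i(\vec v),U^i(\vec v)$ depends on $v^{i+1}$, one cannot compare the $v^i$'s component-wise in isolation. The argument has to exploit the non-free-loop condition \eqref{nflp} from [H3] together with a simultaneous doubling-of-variables performed over the full vector, typically with a joint maximum chosen over the components of $\Gamma$ — this is the genuinely technical step, while the penalization/Perron machinery described above is by now fairly standard following \cite{DHMZ17}.
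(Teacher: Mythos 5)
Your overall strategy --- decreasing penalization producing the usc subsolution $\bar v^i$ and, for each fixed $m_0$, the continuous supersolution $\bar v^{i,m_0}$, then a comparison principle plus Perron's method --- is exactly the route the paper takes in the Appendix (Propositions \ref{barv}, \ref{vsuper}, \ref{comp} and Theorem \ref{uniqv}), and your identification of the comparison principle as the crux, resting on the non-free-loop condition \eqref{nflp} and the selection of an index $i_0$ in the argmax set at which both obstacle constraints are strict (Lemma \ref{lemmeindices}), matches the paper. The one genuinely different sub-step is the supersolution property of $\bar v^{i,m_0}$: you obtain it by a direct pointwise case analysis on the sign of $\bar v^{i,m_0}-U^i(\vec{\bar v}^{m_0})$ inside the viscosity inequality for \eqref{vm} (the penalization being a zeroth-order term, this is legitimate and arguably more elementary), whereas the paper re-reads $\bar Y^{i,m_0}$ as a doubly reflected BSDE with the auxiliary upper barrier $\bar Y^{i,m_0}\vee(\bar Y^{i+1,m_0}+\bar g_{i,i+1})$, invokes the Dynkin-game/viscosity characterization of \cite{HH05}, and then uses $a-a\vee b\le a-b$ to pass to the true barrier. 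Two small points in your subsolution step deserve care: (i) passing to the limit in $f^i(t_m,x_m,(\bar v^{l,m}(t_m,x_m))_{l},\cdot)$ requires the monotonicity of $f^i$ in the off-diagonal components ([H5]-b)) together with $\limsup_m \bar v^{l,m}(t_m,x_m)\le \bar v^l(t_0,x_0)$, which the paper checks explicitly; (ii) your conclusion ``either $\bar v^i=U^i(\vec{\bar v})$ or the Hamiltonian is nonpositive, each of which makes the min--max nonpositive'' is logically insufficient as stated, since the inner $\max$ is nonpositive only if \emph{both} of its terms are --- but both do in fact follow from your own computation (drop the nonnegative penalization term to get the Hamiltonian inequality, divide by $m$ to get $\bar v^i\le U^i(\vec{\bar v})$), so this is a slip of phrasing rather than a genuine gap.
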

\begin{proof}It is rather long and then postponed to Appendix.
\end{proof}
As a consequence we have the following result for the increasing scheme:
\begin{corollary}
The $p$-tuple of functions $(\underline{v}^i)_{\ig}$ is also continuous and the unique viscosity solution, in the class $\Pi_g$, of the following system of max-min type: 
$\forall i \in \Gamma$ and $(\tx)\in\0TR,$
\begin{equation}\label{sysedpmilieu2}
\left\lbrace
\begin{array}{l}
\max\{v^i(t,x)-U^i(\vec{v})(t,x);\min\left[v^i(t,x)-L^i(\vec{v})(t,x);\right.\\
\qquad\left.-\partial_tv^i(t,x)-\mathcal{L}^X(v^i)(t,x)-f^i(t,x,(v^l(t,x))_{l\in\Gamma},\sigma(t,x)^\top D_xv^i(t,x)\right]\}=0;\\
v^i(T,x)=h^i(x).
\end{array}
\right.
\end{equation}
\end{corollary}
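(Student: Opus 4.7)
My proof plan parallels that of Theorem \ref{thm43} with the roles of upper and lower obstacles, and of $\max$ and $\min$, swapped. Recall from Proposition \ref{pena} that $(\underline{v}^{i,n})_{\ig}$ is an increasing sequence of continuous functions in $\Pi_g$, each a viscosity solution of the obstacle problem \eqref{vn} with upper barrier $U^i$ and with a lower-barrier penalization embedded in $\underline{f}^{i,n}$. This scheme is dual to the decreasing family $(\overline{v}^{i,m})$ used to construct $\overline{v}^i$ in Theorem \ref{thm43}, so a symmetric Perron-type argument is the natural route.

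A preliminary algebraic observation is that for real numbers with $a\ge b$ and any $c$ one has
\[
\min\{a,\max\{b,c\}\}=\max\{b,\min\{a,c\}\},
\]
as is checked by splitting on the three cases $c\ge a$, $b\le c\le a$, $c\le b$. Since $\underline{g}_{\iplus1}+\overline{g}_{\iplus1}\ge 0$ by [H3], we have $L^i(\vec v)\le U^i(\vec v)$, so with $a=v^i-L^i(\vec v)$ and $b=v^i-U^i(\vec v)$ the identity shows that systems \eqref{sysedpmilieu} and \eqref{sysedpmilieu2} coincide pointwise, and the same holds when tested against the semicontinuous envelopes in the viscosity sense. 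Hence the two systems are literally the same viscosity equation.

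I would then carry out three steps. First, verify $\underline{v}^i\in \Pi_g$, which is immediate from the sandwich $\underline{v}^{i,0}\le \underline{v}^i\le \overline{v}^{i,0}$ together with Proposition \ref{pena}. Second, run the Perron argument from the Appendix proof of Theorem \ref{thm43} dually in order to verify that $\underline{v}^i$ is a viscosity solution of \eqref{sysedpmilieu2}: the upper-obstacle constraint $\underline{v}^{i,n}\le U^i((\underline{v}^{l,n})_{l\in\Gamma})$ is preserved in the limit, whereas the blow-up of the penalization forces $\underline{v}^i\ge L^i(\vec{\underline{v}})$ as $n\rwi$, and standard viscosity stability transfers the parabolic inequality. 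Third, by the preliminary equivalence $\underline{v}^i$ is then also a viscosity solution of \eqref{sysedpmilieu} in $\Pi_g$; the uniqueness part of Theorem \ref{thm43} gives $\underline{v}^i=\overline{v}^i$, which delivers continuity and polynomial growth, as well as uniqueness in $\Pi_g$ for \eqref{sysedpmilieu2}.

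The step I expect to be the main obstacle is the supersolution property of $\underline{v}^i$: since $\underline{v}^i$ arises as an increasing pointwise supremum of continuous functions, its upper semicontinuous envelope may exceed $\underline{v}^i$ at boundary points, and verifying the parabolic and obstacle inequalities at contact points forces one to repeat the same Perron bootstrap as in Theorem \ref{thm43}, using monotonicity of $(\underline{v}^{i,n})_n$ to extract admissible test functions at nearby points and then passing to the limit using continuity of the data. Once continuity of $\underline{v}^i$ is established a posteriori via its coincidence with $\overline{v}^i$, Dini's theorem makes the convergence $\underline{v}^{i,n}\nearrow \underline{v}^i$ uniform on compact sets, which closes the argument cleanly.
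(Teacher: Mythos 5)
Your plan is sound, but it diverges from the paper's argument in an instructive way. The paper disposes of this corollary in one line: setting $w^i:=-\underline v^i$ turns the increasing scheme into a decreasing scheme for the data $\{-f^i(t,x,-\vec y,-z),-h^i\}$ with the roles of $\underline g_{i,i+1}$ and $\overline g_{i,i+1}$ exchanged, so Theorem \ref{thm43} applies verbatim to $(w^i)_{\ig}$ and one only has to translate the resulting min-max system back through the sign change (the cited result of Barles). This bypasses your second step entirely: you propose to re-run the whole Perron machinery ``dually'', which is correct but amounts to rewriting the Appendix, whereas the negation reduces existence, comparison and uniqueness for the max-min system to the theorem already proved. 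On that step, note also that your heuristic about where the difficulty lies is inverted: an increasing limit of continuous functions is lower semicontinuous, so the supersolution property of $\underline v^i$ is the easy half (stability of supersolutions under increasing limits), and it is the subsolution property that requires the Perron bootstrap --- dual to the situation for $\bar v^i$. What your approach buys is the preliminary identity $\min\{a,\max\{b,c\}\}=\max\{b,\min\{a,c\}\}$ for $a\ge b$: since here $U^i(\vec v)-L^i(\vec v)=\underline g_{i,i+1}+\overline g_{i,i+1}>0$ identically (both obstacles involve the same $v^{i+1}$), the min-max and max-min operators really are the same function of $(v^i,\vec v,p,q,M)$, so systems \eqref{sysedpmilieu} and \eqref{sysedpmilieu2} coincide as viscosity equations and uniqueness in Theorem \ref{thm43} then forces $\underline v^i=\bar v^i$. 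The paper does not exploit this; it only proves $\bar v^i=\underline v^i$ in Section 5 by a reflected-BSDE argument that needs the extra hypothesis that $f^i$ is independent of $z$. Your observation, which is specific to the cyclic switching structure of this paper (it fails for the general interconnected obstacles of \cite{DHMZ17}, where $U^i-L^i$ may change sign), gives that identification directly and without the extra hypothesis --- a genuine strengthening, provided you still supply the existence half by one of the two routes above.
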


To obtain the proof of this result it is enough to consider $(-\underline v^i)_{\ig}$ which becomes a decreasing scheme associated with $\{(-f^i(t,x,-\vec y,-z))_{\ig},(-h^i)_{\ig}, (\bar g_i)_{\ig},(\underline g_i)_{\ig}\}$, to use the previous theorem and finally a result by G.Barles (\cite{barles}, pp.18).\qed
\section{Systems of Reflected BSDEs with bilateral interconnected barriers}
First note that throughout this section we assume that $[H2], [H3]$ and $[H5]$ are fulfilled. Next recall the system of RBSDEs $(\bar{Y}^{i,m,\tx},\bar{Z}^{i,m,\tx},\bar{K}^{i,m,+,\tx})$ in Proposition \ref{pena}-b)-c) and the representation \eqref{ymvm}. As the sequence $((\bar{v}^{i,m})_{\geq 0})_{\ig}$ converges pointwise decreasingly to the continuous functions $(v^i)_{\ig}$. Then, by Dini's theorem, this convergence is uniform on compact sets of $\0TR$. Next, the uniform polynomial growths of $(v^i)_{\ig}$ and $((\bar{v}^{i,m})_{\geq 0})_{\ig}$ combined with estimate \eqref{State_3} of $\xtx$ imply that for any $\ig$,
\begin{equation} \lb{convymy}\mathbb{E}(\sup_{s\in[t,T]}|\bar{Y}_s^{i,m,\tx}-{Y}_s^{i,\tx}|^2) \to_{m\to\infty} 0
\end{equation}
where we set:
\begin{equation}
\label{markov}
\forall s\le T \mx{ and }\ig,\,\,Y_s^{i,\tx}=v^i(s\vee t,X_{s\vee t}^{\tx}).
\end{equation}
\begin{proposition}
\label{refl}
For any $(\tx)\in\0TR, s\in[t,T], \ig,$
\begin{equation}\label{refl1}
Y_s^i\leq U^i((Y_s^l)_{l\in\Gamma}):=Y_s^{i+1}+\overline{g}_{i,i+1}(s,X_s^{\tx}).
\end{equation}
\end{proposition}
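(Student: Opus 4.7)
The plan is to extract the desired inequality from the decreasing penalization scheme \eqref{ym} by exploiting the fact that the penalization term $-m(\bar Y^{i,m}_r - \bar Y^{i+1,m}_r - \overline{g}_{i,i+1}(r,X_r^{t,x}))^+$ in the driver of $\bar Y^{i,m}$ forces the positive part to vanish as $m\rwi$. More precisely, writing \eqref{ym} between $t$ and $T$ and taking expectation (the stochastic integral being a true martingale since $\bar Z^{i,m}\in \cH^{2,d}$) yields
\begin{equation*}
\E\!\left[\int_t^T m\bigl(\bar Y^{i,m}_r-\bar Y^{i+1,m}_r-\overline{g}_{i,i+1}(r,X_r^{\tx})\bigr)^+ dr\right]
= \E[h^i(X_T^{\tx})]-\E[\bar Y^{i,m}_t]+\E\!\left[\int_t^T f^i(r,X_r^{\tx},\vec{\bar Y}^m_r,\bar Z^{i,m}_r)dr\right]+\E[\bar K^{i,m,+}_T].
\end{equation*}

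Next I would show that the right-hand side is bounded uniformly in $m$. The convergence \eqref{convymy} implies that $(\bar Y^{i,m})_m$ is bounded in $\cS^2$, and standard a priori estimates for reflected BSDEs with one lower obstacle give a bound on $\E[(\bar K^{i,m,+}_T)^2]$ in terms of $\sup_s|\bar Y^{i,m}_s|$, the terminal datum $h^i(X_T^{\tx})$, the driver at zero $f^i(\cdot,X^{\tx},0,0)$, and the obstacle $L^i((\bar Y^{l,m})_{\lG})$; the Lipschitz hypothesis [H5]-a), the polynomial growth of the data together with \eqref{State_3} then deliver a bound independent of $m$. Dividing by $m$ gives
\[
\E\!\left[\int_t^T\bigl(\bar Y^{i,m}_r-\bar Y^{i+1,m}_r-\overline{g}_{i,i+1}(r,X_r^{\tx})\bigr)^+ dr\right]\le \frac{C}{m}\underset{m\rwi}{\longrightarrow}0.
\]

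Passing now to the limit, the $\cS^2$-convergence \eqref{convymy} for both indices $i$ and $i+1$ and Lebesgue's dominated convergence theorem yield
\[
\E\!\left[\int_t^T\bigl(Y^i_r-Y^{i+1}_r-\overline{g}_{i,i+1}(r,X_r^{\tx})\bigr)^+ dr\right]=0.
\]
Finally, since $v^i,v^{i+1}$ and $\overline{g}_{i,i+1}$ are continuous and $X^{\tx}$ has continuous paths, the integrand $s\mapsto (Y^i_s-Y^{i+1}_s-\overline{g}_{i,i+1}(s,X_s^{\tx}))^+$ is a.s. continuous on $[t,T]$, whence it vanishes identically and \eqref{refl1} holds for $s\in[t,T]$. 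For $s<t$ the inequality follows trivially from the convention $X_s^{\tx}=x$ and [H2] (consistency at terminal time is not needed here since on $[0,t]$ the processes $Y^{i,\tx}$ are just $v^i(t,x)$).

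The main obstacle is ensuring the $m$-uniform control of $\E[\bar K^{i,m,+}_T]$: since the driver $\bar f^{i,m}$ contains the unbounded penalization, one must rely on the a priori bound of $\bar Y^{i,m}$ in $\cS^2$ (furnished by \eqref{convymy} and \eqref{ymvm}, both guaranteed by the PDE analysis in Theorem \ref{thm43}) and derive the estimate for $\bar K^{i,m,+}$ from the reflection condition and comparison with the Snell envelope of the barrier $L^i((\bar Y^{l,m})_{\lG})$, rather than trying to estimate it from the penalization side.
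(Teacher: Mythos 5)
There is a genuine gap at the step you yourself flag as the main obstacle: the $m$-uniform bound on $\E[\bar K^{i,m,+}_T]$ is not available, and your argument is in fact circular. Taking expectations in \eqref{ym} gives the identity
$m\,\E\bigl[\int_t^T(\bar Y^{i,m}_r-\bar Y^{i+1,m}_r-\overline g_{i,i+1}(r,X^{\tx}_r))^+dr\bigr]=\E[h^i(X_T^{\tx})]-\E[\bar Y^{i,m}_t]+\E\bigl[\int_t^Tf^i(r,X_r^{\tx},\vec{\bar Y}^m_r,\bar Z^{i,m}_r)dr\bigr]+\E[\bar K^{i,m,+}_T-\bar K^{i,m,+}_t]$,
so the penalization integral and $\E[\bar K^{i,m,+}_T]$ differ by a quantity that is bounded uniformly in $m$: bounding either one is exactly equivalent to bounding the other, and the Snell-envelope/Doob--Meyer computation you propose as an alternative reproduces this same identity. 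The ``standard a priori estimates'' for the one-obstacle RBSDE \eqref{ym} do not help because the driver $\overline f^{i,m}$ has Lipschitz constant of order $m$ in $\vec y$, so the constants in those estimates blow up with $m$. A uniform $L^1$ (or $L^2$) bound on $\bar K^{i,m,+}_T$ is essentially a Mokobodzki-type condition, which the authors explicitly state cannot be verified here because the barriers depend on the unknown solution; consistently, the limit object in Theorem \ref{exist} only has $K^{i,\pm}\in\mathcal{A}_{loc}$ and $Z^i\in\mathcal{H}^{2,d}_{loc}$, so your scheme, if it worked, would prove more integrability than is actually true in this setting.

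The paper circumvents this by a local argument by contradiction. Supposing $v^i(t_0,x_0)-v^{i+1}(t_0,x_0)-\overline g_{i,i+1}(t_0,x_0)\ge\epsilon>0$, Dini's theorem gives $\bar v^{i,m}-\bar v^{i+1,m}-\overline g_{i,i+1}\ge\epsilon/8$ on a ball around $(t_0,x_0)$ for all large $m$; on $[t_0,\tau_{t_0,x_0}]$ ($\tau_{t_0,x_0}$ the exit time of that ball) the process $\bar Y^{i,m}$ then lies strictly above its lower obstacle, so $d\bar K^{i,m,+}=0$ there and the increasing process disappears entirely from the local equation. The resulting estimate of $m^2\E[(\int_{t_0}^{\tau_{t_0,x_0}}(\cdots)^+ds)^2]$ involves only $\sup_s|\bar Y^{i,m}_s|$ and $f^i(\cdot,\cdot,0,0)$, hence is uniform in $m$, and combined with the lower bound $\epsilon/8$ it forces $\p[t_0<\tau_{t_0,x_0}]=0$, a contradiction. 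Your concluding continuity argument is fine, but without the localization that removes $\bar K^{i,m,+}$ from the picture the quantitative input it needs cannot be obtained.
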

\begin{proof}
According to \eqref{markov}, it is enough to show the following inequality: for any $\ig, (\tx)\in\0TR$,
\begin{equation}
\label{refl2}
v^i(\tx)\leq v^{i+1}(\tx)+\bar{g}_{\iplus1}(\tx).
\end{equation}
Indeed, we assume by contradiction that there exists some $(t_0,x_0)\in [0,T)\times \R^k$ and a strictly positive $\epsilon>0$ such that
\begin{equation}
\label{refl22}
 v^i(t_0,x_0)-v^{i+1}(t_0,x_0)-\bar{g}_{\iplus1}(t_0,x_0)\geq \epsilon>0.
\end{equation}
By the uniform convergence of $(\bar{v}^{i,m})_{\ig}$ to the functions  $(v^i)_{\ig}$ on compact subsets, we can find some $\rho>0$ and a ball defined by
\[\mathcal{B}((t_0,x_0),\rho):=\lbrace(\tx)\in\0TR, \;\text{s.t.}\; |t-t_0|\leq \rho \; \text{and}\; |x-x_0|\leq \rho \rbrace  \]
and some $m_0$ large enough such that for any $m\geq m_0,$
\begin{equation}\label{bveps} \bar{v}^{i,m}(\tx)-\bar{v}^{i+1,m}(\tx)-\bar{g}_{\iplus1}(\tx)\geq \dfrac{\epsilon}{8}>0, \,\,\fr (\tx) \in  \mathcal{B}((t_0,x_0),\rho).\end{equation}
Next let us introduce the following stopping time
\[ \tau_{t_0,x_0};=\inf\lbrace s\geq t_0,X_s^{\tx}\not\in\mathcal{B}((t_0,x_0),\rho) \rbrace \wedge(t_0+\rho). \]
Notice that for any $s\in [t_0,\tau_{t_0,x_0}],$
\begin{align*}
\bar{Y}_s^{i,m,t_0,x_0}&=\bar{v}^{i,m}(s,X_s^{t_0,x_0})\\
&>\bar{v}^{i+1,m}(s,X_s^{t_0,x_0})+\bar{g}_{\iplus1}(s,X_s^{t_0,x_0})\\
&>\bar{v}^{i+1,m}(s,X_s^{t_0,x_0})-\underline{g}_{\iplus1}(s,X_s^{t_0,x_0})\\
&=\bar{Y}^{i+1,m,t_0,x_0}_s-\underline{g}_{\iplus1}(s,X_s^{t_0,x_0})
\end{align*}
As a result for $s\in [t_0,\tau_{t_0,x_0}], d\bar{K}^{i,m,+,t_0,x_0}_s=0$ and then from \eqref{ym}   we deduce that: $\fr$ $s\in [t_0,\tau_{t_0,x_0}]$, 
\begin{align}
\overline{Y}_s^{i,m,t_0,x_0}=&\bar{Y}_{\tau_{t_0,x_0}}^{i,m,t_0,x_0}+\int_s^{\tau_{t_0,x_0}} \{{f}^{i,m}(r,X_r^{t_0,x_0},(\overline{Y}_r^{l,m,t_0,x_0})_{l \in \Gamma},\overline{Z}_r^{i,m,t_0,x_0})\\&-m(\overline{Y}_r^{i,m,t_0,x_0}-[\overline{Y}_r^{i+1,m,t_0,x_0}+\overline{g}_{\iplus1}(r,X_r^{t_0,x_0})])^+\}dr \nn -\int_s^{\tau_{t_0,x_0}} \overline{Z}_r^{i,m,t_0,x_0}dB_r.\nn
\end{align}
Next as in \cite{EKPPQ97}, since $\overline{g}_{\iplus1}$, $\bar{v}^{i,m}$ and $\bar{v}^{i+1,m}$ are of polynomial growth (uniformly for these latter) and by using \eqref{State_3} we deduce that:
\begin{align}\label{refl5}
\begin{split}
&m^2\mathbb{E}[\{\int_{t_0}^{\tau_{t_0,x_0}}(\bar{Y}_s^{i,m,t_0,x_0}-\bar{Y}_s^{i+1,m,t_0,x_0}-\bar{g}_{\iplus1}(s,X_s^{t_0,x_0}))^+ds\}^2]\\
&\qq\qq\qq \leq C\mathbb{E}[\sup_{s\in[t_0,\tau_{t_0,x_0}], \ig}|\bar{Y}_s^{i,m,t_0,x_0}|^2]+C\mathbb{E}[\{\int_{t_0}^{\tau_{t_0,x_0}}f^i(s,X_s^{t_0,x_0},0,0)ds\}^2].
\end{split}
\end{align}
for some cosntant $C$ which is independant of $m$. Therefore using \eqref{bveps} we have
\begin{align}\label{refl52}
\begin{split}
&m^2\frac{\epsilon^2}{64}\p[t_0<\tau_{t_0,x_0}]\leq C\mathbb{E}[\sup_{s\in[t_0,\tau_{t_0,x_0}], \ig}|\bar{Y}_s^{i,m,t_0,x_0}|^2]+C\mathbb{E}[\{\int_{t_0}^{\tau_{t_0,x_0}}f^i(s,X_s^{\tx},0,0)ds\}^2].
\end{split}
\end{align}which implies, in sending $m$ to $+\infty$,  
$\p[t_0<\tau_{t_0,x_0}]=0$, i.e. $\p[t_0=\tau_{t_0,x_0}]=1$. But this is contradictory since $\rho>0$ and then $(t_0,x_0)$ satisfying \eqref{refl22} does not exists. The proof of the claim is complete.  
\end{proof}
We now give the main result of this section.

\begin{theorem}\label{exist}Assume that the assumptions [H2],[H3] and [H5] are fulfilled anf for any $\ig$, $f^i$ does not depend on $z^i$. Then for any $(\tx)\in \0TR$, there exist adapted processes $K^{i,\pm,\tx}$ and $Z^{i,\tx}$ valued respectively in $\R^+$ and $\R^d$ such that, in combination with $Y^{i,\tx}$, verify: For any $\ig$,

i) $K^{i,\pm,\tx}$ are continuous, non decreasing and 
$K^{i,\pm,\tx}_t=0$ ; $\p$-a.s. $K^{i,\pm,\tx}_T<\infty$ and $\int_t^T|Z^{i,\tx}_s|^2ds <\infty$ ; 

ii) $\frst$, 
\begin{equation}\label{yi}
\left\lbrace
\begin{array}{l}
Y_s^{i,\tx}=h^i(X_T^{\tx})+\int_s^Tf^i(r,X_r^{\tx},(Y_r^{l,\tx})_{l\in\Gamma})dr-\int_s^TZ^{i,\tx}_rdB_r\\\\\qq\qq\qq+K_T^{i,+,\tx}-K_s^{i,+,\tx}-(K_T^{i,-,\tx}-K_s^{i,-,\tx});
\\\\
L_s^i((Y^{l,\tx})_{l\in\Gamma})\leq Y^{i,\tx}_s\leq U_s^i((Y^{l,\tx})_{l\in\Gamma});\\\\
\int_t^T (Y^{i,\tx}_s-L_s^i((Y^{l,\tx})_{l\in\Gamma}))dK_s^{i,+,\tx}=0\;\mbox{and} \; \int_t^T (Y^{i,\tx}_s-U_s^i((Y^{l,\tx})_{l\in\Gamma}))dK_s^{i,-,{i,\tx}}=0
\end{array}
\right.
\end{equation}
where for $s\in \tT$, $L_s^i((Y^{l,\tx})_{l\in\Gamma}):=Y^{i+1,\tx}_s-\underline{g}_{\iplus1}(s,X_s^{\tx})$ and $U_s^i((Y^{l,\tx})_{l\in\Gamma}):=Y^{i+1,\tx}_s+\bar{g}_{\iplus1}(s,X_s^{\tx})$.

Moreover if there exists another quadruple 
$(\bar{Y}^{i,\tx},\bar{Z}^{i,\tx},\bar{K}^{i,\pm,\tx})$ which satisfies (i)-(ii), then for any $\stt$ and $\ig$,
$\bar{Y}^{i,\tx}_s={Y}^{i,\tx}_s$, $\bar{K}^{i,\pm,\tx}_s={K}^{i,\pm,\tx}_s$ and finally $\bar{Z}^{i,\tx}={Z}^{i,\tx}_s, \,\,ds\otimes d\p \mbox{ on }[t,T]\times \Omega.$
\end{theorem}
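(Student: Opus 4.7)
My plan is to exploit the Feynman--Kac representation $Y^{i,t,x}_s=v^i(s\vee t, X^{t,x}_{s\vee t})$ supplied by Theorem~\ref{thm43} to decouple the system: once the full family $(Y^{l,t,x})_{l\in\Gamma}$ is frozen, each reflected BSDE in~\eqref{yi} becomes a \emph{standard} two-barrier RBSDE in which only the $i$-th triple $(Y^i,Z^i,K^{i,\pm})$ is unknown. First I would verify the prerequisites. Since each $v^i\in\Pi_g$ is continuous, the processes $Y^{i,t,x}$, $L^i_s:=Y^{i+1,t,x}_s-\underline g_{i,i+1}(s,X^{t,x}_s)$ and $U^i_s:=Y^{i+1,t,x}_s+\bar g_{i,i+1}(s,X^{t,x}_s)$ are continuous and belong to $\mathcal{S}^2$ by \eqref{State_3}. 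The upper sandwich bound $Y^{i,t,x}\le U^i$ is exactly Proposition~\ref{refl}; the lower one $Y^{i,t,x}\ge L^i$ follows by passing to the limit $m\to\infty$ in $\bar Y^{i,m}\ge L^i((\bar Y^{l,m})_{l\in\Gamma})$ from~\eqref{ym}, using the $\mathcal{S}^2$-convergence~\eqref{convymy}. The strict separation $U^i-L^i=\underline g_{i,i+1}+\bar g_{i,i+1}>0$ holds by [H3]-a), and $L^i_T\le h^i(X^{t,x}_T)\le U^i_T$ by [H2].

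For each fixed $i\in\Gamma$ I would then invoke the classical existence theorem for two-barrier reflected BSDEs (Cvitani\'c--Karatzas / Hamad\`ene--Lepeltier--Matoussi) with continuous barriers $L^i,U^i\in\mathcal{S}^2$, driver $\phi^i(s):=f^i(s,X^{t,x}_s,\vec Y^{t,x}_s)$, and terminal $h^i(X^{t,x}_T)$. Here it is crucial that $f^i$ does not depend on $z^i$, so $\phi^i$ is a fixed $\mathcal{P}$-measurable $\mathcal{H}^{2,1}$-process rather than a genuinely implicit driver. Mokobodski's condition holds because $L^i<U^i$ has a continuous strictly positive difference $\underline g_{i,i+1}+\bar g_{i,i+1}$, so standard theory produces a unique quadruple $(\tilde Y^i,Z^{i,t,x},K^{i,+,t,x},K^{i,-,t,x})\in\mathcal{S}^2\times\mathcal{H}^{2,d}\times(\mathcal{A}^2)^2$ satisfying the analogue of \eqref{yi} with $\tilde Y^i$ in place of $Y^{i,t,x}$.

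The main obstacle is to identify $\tilde Y^i=Y^{i,t,x}$. By the Dynkin-game representation (in the spirit of Proposition~\ref{Problem:Implicit-Dynkin-Game-Upper-Value} applied to this standard two-barrier RBSDE), $\tilde Y^i_s$ is the value of the zero-sum Dynkin game with obstacles $L^i,U^i$ and driver $\phi^i$; the Markov property of $X^{t,x}$ gives $\tilde Y^i_s=\tilde v^i(s,X^{t,x}_s)$ for some continuous deterministic $\tilde v^i$, and a Feynman--Kac / viscosity argument identifies $\tilde v^i$ as a viscosity solution of the scalar variational inequality appearing in \eqref{sysedpmilieu} with $v^{i+1}$ (and thus $L^i(\vec v),U^i(\vec v)$) frozen. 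Since $v^i$ also solves this same scalar problem by Theorem~\ref{thm43}, the standard comparison principle for two-barrier variational inequalities forces $\tilde v^i=v^i$, hence $\tilde Y^i=Y^{i,t,x}$. Uniqueness of the full quadruple in~\eqref{yi} is then obtained by applying It\^o's formula to $(Y^{i,t,x}-\bar Y^i)^2$ after the localization $\gamma_k:=\inf\{s\ge t:\int_t^s(|Z^{i,t,x}_r|^2+|\bar Z^i_r|^2)\,dr\ge k\}\wedge T$: the Skorokhod minimality conditions force each reflection cross-term to be non-positive, Gronwall together with the Lipschitz property of $f^i$ yields $\bar Y^i\equiv Y^{i,t,x}$, the martingale parts then agree so $\bar Z^i=Z^{i,t,x}$ $ds\otimes d\mathbb{P}$-a.e., and since $L^i<U^i$ implies $dK^{i,+,t,x}$ and $dK^{i,-,t,x}$ charge the disjoint sets $\{Y^{i,t,x}=L^i\}$ and $\{Y^{i,t,x}=U^i\}$ (likewise for $d\bar K^{i,\pm}$), the identity $K^{i,+,t,x}-K^{i,-,t,x}=\bar K^{i,+}-\bar K^{i,-}$ forces $\bar K^{i,\pm}=K^{i,\pm,t,x}$.
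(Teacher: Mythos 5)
Your existence argument follows essentially the paper's route: freeze $(Y^{l,\tx})_{l\in\Gamma}$ in the obstacles and in the driver (using that $f^i$ does not depend on $z^i$), solve the resulting standard two-barrier reflected BSDE, and identify its first component with $Y^{i,\tx}$. Two remarks on this part. First, since $U^i-L^i=\underline{g}_{\iplus1}+\bar{g}_{\iplus1}$ is only pointwise positive and not bounded away from zero, Mokobodski's condition is not at your disposal; the correct tool is the existence theorem for \emph{completely separated} barriers (Theorem 3.7 of \cite{HH05}), which is what the paper invokes and which yields only $Z^{i,\tx}\in\mathcal{H}^{2,d}_{loc}$ and $K^{i,\pm,\tx}\in\mathcal{A}_{loc}$ --- exactly the regularity asserted in (i), not the $\mathcal{H}^{2,d}\times(\mathcal{A}^2)^2$ you claim. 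Second, your identification $\tilde v^i=v^i$ through the scalar frozen PDE and its comparison principle is a workable alternative to the paper's argument, which instead passes to the limit in the $\esssup\text{-}\essinf$ Dynkin-game representation of the penalized solutions $\bar Y^{i,m,\tx}$ and matches it with the representation of the constructed solution.

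The uniqueness step, however, contains a genuine gap. The competing quadruple $(\bar Y^{i,\tx},\bar Z^{i,\tx},\bar K^{i,\pm,\tx})$ is reflected on \emph{its own} obstacles $\bar Y^{i+1,\tx}\mp$ the switching costs, not on those of $Y^{i,\tx}$, so the reflection cross-terms in It\^o's formula are not sign-definite. The Skorokhod condition only gives
\[
\int_s^T\bigl(Y^{i,\tx}_r-\bar Y^{i,\tx}_r\bigr)\,dK^{i,+,\tx}_r=\int_s^T\bigl(L_r^i((Y^{l,\tx})_{l\in\Gamma})-\bar Y^{i,\tx}_r\bigr)\,dK^{i,+,\tx}_r\le\int_s^T\bigl(Y^{i+1,\tx}_r-\bar Y^{i+1,\tx}_r\bigr)\,dK^{i,+,\tx}_r,
\]
and the right-hand side cannot be absorbed by Gronwall: $dK^{i,+,\tx}$ is not absolutely continuous with respect to $dr$, and $K^{i,+,\tx}_T$ is only $\p$-a.s.\ finite, not integrable, so it cannot even be dominated in expectation. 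This circularity caused by the interconnection of the obstacles is precisely why the paper abandons the It\^o--Gronwall route: it proves uniqueness by a fixed-point argument, defining $\Phi(\vec\phi)=(Y^{\phi,i})_{\ig}$ through the system with driver $f^i(r,X_r^{\tx},\vec\phi(r))$, bounding $|Y^{\phi,i}_s-Y^{\psi,i}_s|$ by $\esssup_{u,v}|J^{\phi}(\Theta(u,v))_s-J^{\psi}(\Theta(u,v))_s|$ via the switching-game representation of Theorem \ref{value sans [H4]} and Remark \ref{rmq_value}, and showing $\Phi$ is a contraction for the norm $\Vert\cdot\Vert_{2,\beta}$ with $\beta$ large. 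You would need an argument of this type (or another mechanism breaking the interconnection) for the uniqueness claim to stand; the subsequent identification of $\bar Z^{i,\tx}$ and of $\bar K^{i,\pm,\tx}$ from the separation of the barriers is fine once $\bar Y^{i,\tx}=Y^{i,\tx}$ is known.
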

\begin{proof} \textit{\underline{Existence}}\\

\nd For any $\ig$ and $m\ge 0$, the processes $\bar Y^{i,m,,\tx}$ have the following representation (see e.g. A4 in \cite{DHM15} for more details): For any $s\in[t, T]$,
\begin{align}\label{exist2}
\begin{split}
\bar{Y}_s^{i,m,\tx}&=\esssup_{\sigma\geq s}\essinf_{\tau\geq s} \mathbb{E}[h^i(X_T^{\tx})1_{(\sigma=\tau=T)}+\int_s^{\sigma\wedge\tau}f^i(r,X_r^{\tx},(\bar{Y}_r^{l,m,\tx})_{l\in\Gamma})dr\\
&\qq\qq\qq+L_\sigma^i((\bar{Y}^{l,m,\tx})_{l\in\Gamma})1_{(\sigma<\tau)}+\lbrace U_\tau^i((\bar{Y}^{l,m})_{l\in\Gamma})\vee \bar{Y}^{i,m,\tx}_\tau\rbrace 1_{(\tau\leq \sigma,\t<T)}|\mathcal{F}_s].
\end{split}
\end{align}
Now the convergence of $(\bar Y^{i,m,\tx})_m$ to $Y^{i,,\tx}$ in $\sd([t,T])$ (by \eqref{convymy}) and the inequalities \eqref{refl1} imply that, in taking the limits in both hand-sides of \eqref{exist2}: $\frst$,
\begin{align}\label{exist3}
\begin{split}
&Y^{i,\tx}_s=\esssup_{\sigma\geq s}\essinf_{\tau\geq s}\mathbb{E}[h^i(X_T^{\tx})1_{(\sigma=\tau=T)}+\int_s^{\sigma\wedge\tau}f^i(r,X_r^{\tx},(Y_r^{l,\tx})_{l\in\Gamma})dr\\
&\qq\qq\qq\qq+L_\sigma^i((Y^{l,\tx})_{l\in\Gamma})1_{(\sigma<\t)}+U_\tau^i((Y^{l,\tx})_{l\in\Gamma})1_{(\tau\leq \sigma, \t<T)}|\mathcal{F}_s].
\end{split}
\end{align}
Next the third inequality in \eqref{ym} and \eqref{refl1} imply that: For any $\stt$ and $\ig$,
\begin{align*}
U_s^i((Y^l)_{l\in\Gamma})\ge Y^i_s\ge L_s^i((Y^l)_{l\in\Gamma}).
\end{align*}On the other hand by Assumption [H3]-a), 
\begin{align*}
U_s^i((Y^{l,\tx})_{l\in\Gamma})-L_s^i((Y^{l,\tx})_{l\in\Gamma})=\bar{g}_{\iplus1}(s,X_s^{\tx})+\underline{g}_{\iplus1}(s,X_s^{\tx})>0
\end{align*}
which means that the obstacles $U^i((Y^{l,\tx})_{l\in\Gamma})$ and $L^i((Y^{l,\tx})_{l\in\Gamma})$, for any $\ig$, are completely separated. Therefore by Theorem 3.7 in \cite{HH05}, there exist progressively measurable processes $\underbar Y^{i,\tx}$, $K^{i,\pm,\tx}$ and $Z^{i,\tx}$ valued respectively in $\R$, $\R^+$ and $\R^d$ such that:

i) $\underbar Y^{i,\tx}\in \sd([t,T])$, $K^{i,\pm,\tx}$ are continuous non decreasing and $K^{i,\pm,\tx}_t=0$ ; $\p$-a.s. $\int_t^T|Z^{i,\tx}_s|^2ds <\infty$ ; 

ii) The processes $(\underbar Y^{i,\tx}, K^{i,\pm,\tx},Z^{i,\tx})$ verify: 
$\frst$, 
\begin{equation}\label{yi2}
\left\lbrace
\begin{array}{l}
\underbar Y_s^{i,\tx}=h^i(X_T^{\tx})+\int_s^Tf^i(r,X_r^{\tx},(Y_r^{l,\tx})_{l\in\Gamma})dr-\int_s^TZ^{i,\tx}_rdB_r\\\\\qq\qq\qq+K_T^{i,+,\tx}-K_s^{i,+,\tx}-(K_T^{i,-,\tx}-K_s^{i,-,\tx});
\\\\
L_s^i((Y^{l,\tx})_{l\in\Gamma})\leq \underbar Y_s^{i,\tx}\leq U_s^i((Y^{l,\tx})_{l\in\Gamma});\\\\
\int_t^T (\underbar Y^{i,\tx}_s-L_s^i((Y^{l,\tx})_{l\in\Gamma}))dK_s^{i,+,\tx}=0\;\mbox{and} \; \int_t^T (\underbar Y^{i,\tx}_s-U_s^i((Y^{l,\tx})_{l\in\Gamma}))dK_s^{i,-,\tx}=0.
\end{array}
\right.
\end{equation}Moreover $\underbar Y^{i,\tx}$ has the following representation: $\frst$,
\begin{align}\label{exist35}
\begin{split}
&\underbar Y^{i,\tx}_s=\esssup_{\sigma\geq s}\essinf_{\tau\geq s}\mathbb{E}[h^i(X_T^{\tx})1_{(\sigma=\tau=T)}+\int_s^{\sigma\wedge\tau}f^i(r,X_r^{\tx},(Y_r^l)_{l\in\Gamma})dr\\
&\qq\qq\qq\qq+L_\sigma^i((Y^{l,\tx})_{l\in\Gamma})1_{(\sigma<\t)}+U_\tau^i((Y^{l,\tx})_{l\in\Gamma})1_{(\tau\leq \sigma,\t<T)}|\mathcal{F}_s].
\end{split}
\end{align}
Thus for any $s\in [t,T]$, $\underbar Y^{i,\tx}=Y^{i,\tx}$ and by \eqref{yi2},  
$(Y^{i,tx}, K^{i,\pm,\tx},Z^{i,\tx})$ verify \eqref{yi}. Finally as $i$ is arbitrary then $(Y^{i,\tx}, K^{i,\pm,\tx},Z^{i,\tx})_{\ig}$ is a solution for the system of reflected BSDEs with double obstacles \eqref{yi}. The proof of existence is then stated. It remains to show uniqueness. 
\ms

\noindent\textit{\underline{Uniqueness}}: In this part we apply the fixed point argument over the value of the stochastic game representation (Theorem \ref{value sans [H4]}), and the proof is similar to \cite{HM19}. In the following proof, the defined processes $(Y^{\phi,i},Z^{\phi,i},K^{\phi,i,\pm})_{\ig}$ and $(Y^{\psi,i},Z^{\psi,i},K^{\psi,i,\pm})_{\ig}$ depend on $(\tx)$, but for simplicity of notations we omit it as there is no confusion.

So let us define the following operator:
\begin{align*}
&\Phi:\cH^{2,p}\to \cH^{2,p}\\
&\vec{\phi}:=(\phi^i)_{\ig}\mapsto\Phi(\vec{\phi}):=(Y^{\phi,i})_{\ig}
\end{align*}
where $(Y^{\phi,i},Z^{\phi,i},K^{\phi,i,\pm})_{\ig}$ is the unique solution of 
\begin{align}\label{uniqy0}
\left\lbrace
\begin{array}{l}
\yphi\in\cS^2([t,T]), \p-a.s. \int_t^T|\zphi_s|^2ds<\infty \mx{ and }\kphip_T+\kphim_T<\infty \,\,(\kphip_t+\kphim_t=0);\\
\yphi_s=h^i(X_T^{\tx})+\dis\int_s^Tf^i(r,X_r^{\tx},\vec{\phi}(r))dr-\int_s^T\zphi_rdB_r+\\\qq\qq\qq\qq\qq\qq\kphip_T-\kphip_s-(\kphim_T-\kphim_s),\,\stt;\\
L_s^i((Y^{\phi,l})_{\lG})\leq \yphi_s\leq U_s^i((Y^{\phi,l})_{\lG}),\stt;\\
\dis\int_t^T\left(\yphi_s-L_s^i((Y^{\phi,l})_{\lG})\right)d\kphip_s=0\;\text{and}\;\dis\int_t^T\left(\yphi_s-U_s^i((Y^{\phi,l})_{\lG})\right)d\kphim_s=0.
\end{array}
\right.
\end{align}
In the similar way we define another element of $\cH^{2,p}$ by $\vec{\psi}:=(\psi^i)_{\ig}$ and let $(\ypsi_s,\zpsi_s,K_s^{\psi,i,\pm})_{\stt}$ be a solution of \eqref{uniqy0} where its driver is replaced with $f^i(t,x,\vec\psi(t)), \forall \ig.$

Next we set the following norm, denoted by $\Vert.\Vert_{2,\beta}$ on $\cH^{2,p}$:
\[\Vert y\Vert_{2,\beta}:=(\E[ \dis\int_t^Te^{\beta s}|y_s|^2ds ])^{1/2}.\]
The following calculus is dedicated to prove that $\Phi$ is a contraction on $(\cH^{2,p},\Vert.\Vert_{2,\beta})$ where the appropriate value of $\beta$ is determined later. 

Let us recall Theorem \ref{value sans [H4]} and Remark \ref{rmq_value}, for any $(\tx)\in [0,T]\times\R^k$ and $t\leq s\leq T,$ the following representation holds true:
\begin{equation}\label{uniqy1}
\yphi_s=\essinf_{v\in\cBone_s}\esssup_{u\in\cAone_s}\jphi=\esssup_{u\in\cAone_s}\essinf_{v\in\cBone_s}\jphi
\end{equation}
where $\jphi=\E\left[\left.h^{\tuv_T}(X_T^{\tx})+\dis\int_s^Tf^{\tuv_r}(r,X_r^{\tx},\vec{\phi}(r))-C_\infty^{\tuv}\right\vert \cF_s\right]$ ($u$ and $v$ start from $i$ at time $s$). In the same way $\ypsi$ has also the stochastic game representation by replacing $\phi$ to $\psi$. \\

Now we study the difference of $|\yphi-\ypsi|$. Indeed, $\forall \ig, t\in [0,T], t\leq s\leq T,$
\begin{align}\label{uniqy2}
|\yphi_s-\ypsi_s|\leq \esssup_{u\in\cAone_s}\esssup_{v\in\cBone_s}|\jphi-\jpsi|.
\end{align}
Thanks to the martingale representation theorem, there exists an $(\cF_s)_{s\leq T}-$adapted process $\Delta^{\phi,\psi,\tuv}\in\cH^{2,d}$ such that
\begin{align*}
&\jphi-\jpsi =\E\left[\left.\dis\int_s^T f^{\tuv_r}(r,X_r^{\tx},\vec{\phi}(r))-f^{\tuv_r}(r,X_r^{\tx},\vec{\psi}(r)) dr\right|\cF_s\right]\\
&=\E\left[\left.\dis\int_0^T f^{\tuv_r}(r,X_r^{\tx},\vec{\phi}(r))-f^{\tuv_r}(r,X_r^{\tx},\vec{\psi}(r)) dr\right|\cF_s\right]\\
&\quad-\int_0^s f^{\tuv_r}(r,X_r^{\tx},\vec{\phi}(r))-f^{\tuv_r}(r,X_r^{\tx},\vec{\psi}(r)) dr\\
&=\E\left[\dis\int_0^T f^{\tuv_r}(r,X_r^{\tx},\vec{\phi}(r))-f^{\tuv_r}(r,X_r^{\tx},\vec{\psi}(r)) dr\right]+\int_0^s\Delta_r^{\phi,\psi,\tuv}dB_r\\
&\quad-\int_0^s f^{\tuv_r}(r,X_r^{\tx},\vec{\phi}(r))-f^{\tuv_r}(r,X_r^{\tx},\vec{\psi}(r))dr.
\end{align*} 
Therefore we obtain the following differential form for the difference of the two value functions:
\begin{align*}
d(\jphi-\jpsi)=\left[f^{\tuv_s}(s,X_s^{\tx},\vec{\phi}(s))-f^{\tuv_s}(s,X_s^{\tx},\vec{\psi}(s))\right]ds+\Delta_s^{\phi,\psi,\tuv}dB_s
\end{align*}
Next for any $s\in [t,T]$, we apply It\^o's formula on $e^{\beta s}\left(\jphi-\jpsi\right)^2$ yielding
\begin{align}\label{uniqy4}
&d\left[e^{\beta s}\left(\jphi-\jpsi\right)^2\right]=\beta e^{\beta s}\left[\jphi-\jpsi\right]^2\notag\\
&\quad+2e^{\beta s}\left(\jphi-\jpsi \right)\left[-\left( f^{\tuv_s}(s,X_s^{\tx},\vec{\phi}(s))-f^{\tuv_s}(s,X_s^{\tx},\vec{\psi}(s))\right) \Delta_s^{\phi,\psi,\tuv}dB_s \right]\notag\\
&\quad +e^{\beta s}\left(\Delta_s^{\phi,\psi,\tuv}\right)^2ds.
\end{align}
By integrating \eqref{uniqy4} over $[s,T]$ we obtain
\begin{align}\label{uniqy5}
&e^{\beta s}\left(\jphi-\jpsi\right)^2+\dis\int_s^Te^{\beta r}\left(\Delta_r^{\phi,\psi,\tuv}\right)^2dr\notag\\
&=-\beta\dis\int_s^Te^{\beta r}\left( \jphir-\jpsir\right)^2dr\notag\\
&\quad +2\int_s^Te^{\beta r}\left( \jphir-\jpsir\right)\left(f^{\tuv_r}(r,X_r^{\tx},\vec{\phi}(r))-f^{\tuv_r}(r,X_r^{\tx},\vec{\psi}(r))\right)dr\notag\\
&\quad -2\dis\int_s^T\left(\jphir-\jpsir\right)\Delta_r^{\phi,\psi,\tuv}dB_r.
\end{align}
By applying the inequality $2ab\leq\beta a^2 +\dfrac{b^2}{\beta}$, \eqref{uniqy5} yields
\begin{align*}
&e^{\beta s}\left(\jphi-\jpsi\right)^2+\dis\int_s^Te^{\beta r}\left(\Delta_r^{\phi,\psi,\tuv}\right)^2dr\\
&\leq \dfrac{1}{\beta}\dis\int_s^Te^{\beta s}\left( f^{\tuv_r}(r,X_r^{\tx},\vec{\phi}(r))-f^{\tuv_r}(r,X_r^{\tx},\vec{\psi}(r))\right)^2dr\\
&\quad -2\dis\int_s^T\left(\jphir-\jpsir\right)\Delta_r^{\phi,\psi,\tuv}dB_r.
\end{align*}
By the Lipschitz condition on the driver $f^{\tuv}$, and using the fact that \\$\int_s^Te^{\beta r}\left(\Delta_r^{\phi,\psi,\tuv}\right)^2dr\geq 0,$ we then obtain
\begin{align}\label{uniqy6}
&e^{\beta s}\left(\jphi-\jpsi\right)^2\notag\\
&\le\dfrac{C^2}{\beta}\dis\int_s^T\vert\vec{\phi}(r)-\vec{\psi}(r)\vert^2dr -2\dis\int_s^T\left(\jphir-\jpsir\right)\Delta_r^{\phi,\psi,\tuv}dB_r
\end{align}
where $C=\sum_{\ig}C_i$ with $C_i$ the Lipschitz constant w.r.t. $f^i, \forall \ig$. On the other hand since $(2\dis\int_s^u\left(\jphir-\jpsir\right)\Delta_r^{\phi,\psi,\tuv}dB_r)_{u\in[s,T]}$ is a martingale, then taking the conditional expectation w.r.t. $\cF_s$ on both sides of \eqref{uniqy6} we have
\begin{align}\label{uniqy7}
e^{\beta s}\left(\jphi-\jpsi\right)^2\leq \dfrac{C^2}{\beta}\E\left[ \dis\int_s^T\vert\vec{\phi}(r)-\vec{\psi}(r)\vert^2dr|\cF_s\right].
\end{align}
Let us recall \eqref{uniqy2}, then by taking the expectation on both sides of \eqref{uniqy7} we obtain: $\fr$ $\stt$,
\begin{align}\label{uniqy8}
\E\left[e^{\beta s}\left(\yphi_s-\ypsi_s\right)^2\right]\leq \dfrac{C^2}{\beta}\E\left[ \dis\int_t^T\vert\vec{\phi}(r)-\vec{\psi}(r)\vert^2dr\right].
\end{align}
The last step is integrating \eqref{uniqy8} over $s\in [t,T]$ and then summing over all $\ig$ to obtain: 
\begin{align}\label{uniqy9}
\E\left[\dis\int_t^T \sum_{\ig}e^{\beta s}\left(\yphi_s-\ypsi_s\right)^2ds\right]\leq\dfrac{C^2TP}{\beta}\E\left[ \dis\int_t^T\vert\vec{\phi}(r)-\vec{\psi}(r)\vert^2dr\right].
\end{align}
Obviously it is enough to take $\beta>C^2TP$ (for example we can let $\beta:=4C^2TP$) then the operator $\Phi$ is a contraction on $\cH^{2,p}$ to itself. As a consequence, there exists a fixed point which is nothing but the unique solution of \eqref{yi}.

\noindent Next we suppose that there exists another solution $(\hat{Y}^i,\hat{Z}^i,\hat{K}^{i,\pm})_{\ig}$ of \eqref{yi}, i.e.
\begin{equation}\label{yihat}
\left\lbrace
\begin{array}{l}
\hat{Y}_s^i=h^i(X_T^{\tx})+\int_s^Tf^i(r,X_r^{\tx},(\hat{Y}_r^l)_{l\in\Gamma})dr-\int_s^T\hat{Z}^{i,\tx}_rdB_r\\\\\qq\qq\qq+\hat{K}_T^{i,+,\tx}-\hat{K}_s^{i,+,\tx}-(\hat{K}_T^{i,-,\tx}-\hat{K}_s^{i,-,\tx}), \stt;
\\\\
L_s^i((\hat{Y}^l)_{l\in\Gamma})\leq Y_s^i\leq U_s^i((\hat{Y}^l)_{l\in\Gamma}),\stt;\\\\
\int_t^T (\hat{Y}^i_s-L_s^i((\hat{Y}^l)_{l\in\Gamma}))d\hat{K}_s^{i,+}=0\;\mbox{and} \; \int_t^T (\hat{Y}^i_s-U_s^i((\hat{Y}^l)_{l\in\Gamma}))d\hat{K}_s^{i,-}=0.
\end{array}
\right.
\end{equation}
Thanks to the fixed point result \eqref{uniqy9} we have immediately $Y^i=\hat{Y}^i, \forall \ig$. By applying the equality of $Y^i$ and $\hat{Y}^i$,  we also have $Z^i=\hat{Z}^i$ since from the representation of \eqref{yi} and \eqref{yihat}, their martingale parts should be equal, i.e. for any $\ig, s\in[t,T], \int_s^TZ_s^idB_s=\int_s^T\hat{Z}_s^idB_s.$ Moreover by \eqref{yi} and \eqref{yihat} we have 
$ \forall s\in[t,T], \ig, K^{i,+}_s-K_s^{i,-}=\hat{K}^{i,+}_s-\hat{K}^{i,-}_s$. It remains us now to prove the equality of the increasing processes correspondingly.

For any $s\in[t,T], \ig$ we have 
\begin{align}\label{uniqk1}
\int_t^s\left( Y_r^i-L^i_r((Y_l)_{l\in\Gamma})\right)(dK_r^{i,+}-dK_r^{i,-})=\int_t^s\left( Y_r^i-L^i_r((Y_l)_{l\in\Gamma})\right)(d\hat{K}_r^{i,+}-d\hat{K}_r^{i,-}).
\end{align}
On the other hand by the minimality conditions we have 
\begin{align}\label{uniqk2}
\forall s\in [t,T], \ig, &\int_t^s\left( Y_r^i-L^i_r((Y_l)_{l\in\Gamma})\right)(dK_r^{i,+}-dK_r^{i,+})=-\int_t^s\left( Y_r^i-L^i_r((Y_l)_{l\in\Gamma})\right)dK_r^{i,-}\notag\\
&=-\int_t^s\left( U_r^i((Y_l)_{l\in\Gamma})-L^i_r((Y_l)_{l\in\Gamma})\right)dK_r^{i,-}.
\end{align}
This last equality is due to the fact that $\forall r\in[t,s], dK_r^{i,-}\not=0$ only if $Y^i$ touches the upper obstacle. In the same way we have also the following condition for $\hat{K}^{i,-}: \forall \ig, s\in[t,T],$
\begin{align}\label{uniqk3}
&\dis\int_t^s\left( Y_r^i-L^i_r((Y_l)_{l\in\Gamma})\right)(d\hat{K}_r^{i,+}-d\hat{K}_r^{i,+})=-\int_t^s\left( Y_r^i-L^i_r((Y_l)_{l\in\Gamma})\right)d\hat{K}_r^{i,-}\notag\\
&=-\int_t^s\left( U_r^i((Y_l)_{l\in\Gamma})-L^i_r((Y_l)_{l\in\Gamma})\right)d\hat{K}_r^{i,-}.
\end{align}
Combining \eqref{uniqk1}-\eqref{uniqk3} and (H3)-a)(the two obstacles are totally separated), we finally obtain 
\[\forall \ig,\; \stt, \,K^{i,-}_s=\hat{K}^{i,-}_s \]
since $K^{i,-}_t=\hat{K}^{i,-}_t=0$. Finally the equality 
$K^{i,+}_s-K_s^{i,-}=\hat{K}^{i,+}_s-\hat{K}^{i,-}_s$, $\stt$, implies $K^{i,+}=\hat{K}^{i,+}$. The proof of uniqueness is now finished. 
\end{proof}

We now go back to systems \eqref{sysedpmilieu} and \eqref{sysedpmilieu2} and the question is whether or not they have the same solution. We have the following result:
\begin{proposition}Assume that the assumptions [H2],[H3] and [H5] are fulfilled anf for any $\ig$, $f^i$ does not depend on $z^i$. Then for any $\ig$, $\bar v^i=\underbar v^i$.
\end{proposition}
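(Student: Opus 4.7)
The plan is to show that the functions $(\underline v^i)_{\ig}$ give rise, via the Markov representation, to a solution of the bilateral reflected BSDE system \eqref{yi}; then the uniqueness part of Theorem \ref{exist} forces this solution to coincide with the one built from $(\bar v^i)_{\ig}$, whence $\underline v^i(t,x)=\bar v^i(t,x)$ on all of $\0TR$.

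First I would establish the analog of Proposition \ref{refl} for the \emph{increasing} penalization scheme, namely the automatic validity of the \emph{lower} obstacle in the limit:
\[
\underline v^i(t,x)\ge \underline v^{i+1}(t,x)-\underline g_{\iplus1}(t,x),\qquad \forall (t,x)\in\0TR,\,\ig.
\]
The argument mirrors Proposition \ref{refl} with the roles of the two obstacles interchanged. Suppose by contradiction that at some $(t_0,x_0)$ one has $\underline v^{i+1}(t_0,x_0)-\underline g_{\iplus1}(t_0,x_0)-\underline v^i(t_0,x_0)\ge \varepsilon>0$. Since $(\underline v^{i,n})_n$ is an increasing sequence of continuous functions converging pointwise to the continuous $\underline v^i$, Dini's theorem yields uniform convergence on a compact ball $\mathcal{B}((t_0,x_0),\rho)$, so for $n$ large and $(t,x)$ in the ball,
\[
\underline v^{i+1,n}(t,x)-\underline g_{\iplus1}(t,x)-\underline v^{i,n}(t,x)\ge \varepsilon/8.
\]
On the stochastic interval up to $\tau_{t_0,x_0}$, the upper-obstacle reflection $\underline K^{i,n,-}$ does not activate (by [H3]-a) the two obstacles are strictly separated, so being strictly below the lower obstacle implies being strictly below the upper one), and the penalty in $\underline f^{i,n}$ delivers a contribution of order $n\cdot \varepsilon/8$. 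Taking second moments in the associated BSDE on $[t_0,\tau_{t_0,x_0}]$ and using the uniform polynomial growth of $\underline v^{i,n}$ together with \eqref{State_3}, I would obtain the $n^2$-estimate
\[
n^2\frac{\varepsilon^2}{64}\,\p[t_0<\tau_{t_0,x_0}]\le C,
\]
and sending $n\to\infty$ yields $\p[t_0<\tau_{t_0,x_0}]=0$, which contradicts $\rho>0$.

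Having both $L^i(\underline v)_s\le \underline v^i(s,X_s^{\tx})\le U^i(\underline v)_s$ with strictly separated obstacles, I would then reproduce the existence argument of Theorem \ref{exist}, but starting from the increasing scheme. The $\mathcal{S}^2$-convergence of $\underline Y^{i,n,\tx}$ to $\underline Y^{i,\tx}:=\underline v^i(\cdot\vee t,X^{\tx}_{\cdot\vee t})$ follows from the uniform polynomial growth of $(\underline v^{i,n})_n$ and $\underline v^i$, combined with \eqref{State_3}. Passing to the limit in the Dynkin-game representation of $\underline Y^{i,n,\tx}$ (the dual of \eqref{exist2} for an upper-obstacle RBSDE) yields a representation of $\underline Y^{i,\tx}$ as the value of a zero-sum Dynkin game with strictly separated barriers $L^i(\underline v)$ and $U^i(\underline v)$. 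Theorem 3.7 of \cite{HH05} then provides adapted processes $(Z^{i,\tx},K^{i,+,\tx},K^{i,-,\tx})$ so that $(\underline Y^{i,\tx},Z^{i,\tx},K^{i,\pm,\tx})_{\ig}$ satisfies all the requirements (i)-(ii) of the bilateral RBSDE system \eqref{yi}.

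Finally, the uniqueness part of Theorem \ref{exist} asserts that \eqref{yi} admits a single solution; hence the solution just constructed from $(\underline v^i)_{\ig}$ must coincide with the one constructed from $(\bar v^i)_{\ig}$. Evaluating the Markov representation at $s=t$ gives $\underline v^i(t,x)=\bar v^i(t,x)$ for every $(t,x)\in \0TR$ and every $\ig$. The main obstacle I expect is the first step: establishing the automatic lower-obstacle property for $(\underline v^i)$ requires reproducing the delicate penalization estimate of Proposition \ref{refl} with the correct sign bookkeeping on the $(\cdot)^-$ term of $\underline f^{i,n}$ and checking that, inside the separation ball, the active upper-reflection does not spoil the $n^2$-bound; the rest is a transcription of the existence--uniqueness machinery already set up in Theorem \ref{exist}.
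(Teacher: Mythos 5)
Your proof is correct, and it terminates exactly where the paper's does: you show that $\underline Y^{i,\tx}_s:=\underline v^i(s\vee t,X^{\tx}_{s\vee t})$ solves the bilateral system \eqref{yi} and then invoke the uniqueness half of Theorem \ref{exist}. The difference lies in how the missing obstacle inequality and the Dynkin-game representation are obtained for the increasing scheme. You re-run the penalization estimate of Proposition \ref{refl} with the two obstacles interchanged (correctly noting that a strict violation of the lower barrier forces $d\underline K^{i,n,-}=0$ there, by the separation in [H3]-a)), and then re-run the limiting argument from the existence part of Theorem \ref{exist}. The paper instead performs the sign-flip $v\mapsto -v$: it observes that $(-\underline v^i)_{\ig}$ is the decreasing-scheme solution associated with the transformed data $\{\,-f^i(t,x,-\vec{y},-z),\,-h^i,\,\bar g_{i,i+1},\,\underline g_{i,i+1}\,\}$, so the representation \eqref{exist3} already established for decreasing schemes applies verbatim and gives \eqref{exist32}; flipping signs back and using that completely separated barriers make the $\esssup\essinf$ and $\essinf\esssup$ values coincide yields \eqref{exist33}, i.e.\ that $\underline Y^{i,\tx}$ verifies \eqref{yi}. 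Your route is more self-contained (nothing needs to be checked about the transformed data) at the cost of repeating the delicate $n^{2}$-estimate; the paper's route is shorter but tacitly requires verifying that the sign-flipped data still satisfies [H2], [H3] and [H5]. One small point you gloss over: passing to the limit in the Dynkin-game representation of the upper-obstacle RBSDE \eqref{yn} gives a priori the $\essinf\esssup$ ordering, and you need the complete separation of the barriers to identify it with the $\esssup\essinf$ value appearing in \eqref{exist3}; the paper makes this identification explicit in \eqref{exist32}.
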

\begin{proof}:
Actually $(-\underline v^i)_{\ig}$ is the unique solution of the following system of PDEs with obstacles:
\begin{equation}\label{sysedpmilieu3}
\left\lbrace
\begin{array}{l}
\min\{v^i(t,x)-\check L^i(\vec{v})(t,x);\max\left[v^i(t,x)-\check U^i(\vec{v})(t,x);\right.\\
\qquad\left.-\partial_tv^i(t,x)-\mathcal{L}^X(v^i)(t,x)+f^i(t,x,(-v^l(t,x))_{l\in\Gamma},-\sigma(t,x)^\top D_xv^i(t,x)\right]\}=0;\\
v^i(T,x)=-h^i(x)
\end{array}
\right.
\end{equation}where $\check L^i(\vec{v})(t,x)=v^i(\tx)-\bar g_{i,i+1}(t,x)$ and 
$\check U^i(\vec{v})(t,x)=v^i(\tx)-\underline g_{i,i+1}(t,x)$.
Therefore $-\underline v^i$, has accordingly, the representation \eqref{exist3}, i.e. for any $(\tx)$ and $\ig$, setting $\underline Y^{i,\tx}_s=\underline v^i(s\vee t,X^{\tx}_{s\vee t})$ for $\stt$, we have:
\begin{align}\label{exist32}
\begin{split}
-\underline Y^{i,\tx}_s&=\esssup_{\sigma\geq s}\essinf_{\tau\geq s}\mathbb{E}[-h^i(X_T^{\tx})1_{(\sigma=\tau=T)}+\int_s^{\sigma\wedge\tau}-f^i(r,X_r^{\tx},(-\underline Y_r^{l,\tx})_{l\in\Gamma})dr\\
&\qq\qq\qq\qq+\check L_\sigma^i((-\underline Y^{l,\tx})_{l\in\Gamma})1_{(\sigma<\t)}+\check U_\tau^i((-\underline Y^{l,\tx})_{l\in\Gamma})1_{(\tau\leq \sigma, \t<T)}|\mathcal{F}_s]\\
&=\essinf_{\tau\geq s}\esssup_{\sigma\geq s}\mathbb{E}[-h^i(X_T^{\tx})1_{(\sigma=\tau=T)}+\int_s^{\sigma\wedge\tau}-f^i(r,X_r^{\tx},(-\underline Y_r^{l,\tx})_{l\in\Gamma})dr\\
&\qq\qq\qq\qq+\check L_\sigma^i((-\underline Y^{l,\tx})_{l\in\Gamma})1_{(\sigma<\t)}+\check U_\tau^i((-\underline Y^{l,\tx})_{l\in\Gamma})1_{(\tau\leq \sigma, \t<T)}|\mathcal{F}_s]
\end{split}
\end{align}
since the barriers are completely separated (see e.g. \cite{HH05}). Therefore 
\begin{align}\label{exist33}
\begin{split}
&\underline Y^{i,\tx}_s=\esssup_{\sigma\geq s}\essinf_{\tau\geq s}\mathbb{E}[h^i(X_T^{\tx})1_{(\sigma=\tau=T)}+\int_s^{\sigma\wedge\tau}f^i(r,X_r^{\tx},(\underline Y_r^{l,\tx})_{l\in\Gamma})dr\\
&\qq\qq\qq\qq+L_\sigma^i((\underline Y^{l,\tx})_{l\in\Gamma})1_{(\sigma<\t)}+U_\tau^i((\underline Y^{l,\tx})_{l\in\Gamma})1_{(\tau\leq \sigma, \t<T)}|\mathcal{F}_s].
\end{split}
\end{align}Which means that $((\underline Y^{i,\tx}_s)_{\stt})_{\ig}$ verifes \eqref{yi}. As the solution of this latter is unique then for any $\ig$, $\underline Y^{i,\tx}=Y^{i,\tx}$ which means that for $\ig$, $\bar v^i=\underline v^i$. 
\end{proof}
\section{Appendix: Proof of Theorem \ref{thm43}}
In this section, we prove that the system of \eqref{sysedpmilie1} has a unique continuous solution in viscosity sense in the class $\Pi_g$. Indeed, we firstly provide a comparison result of subsolution and supersolution of \eqref{sysedpmilie1} if they exist, then we show that $(\bar v^i)_{\ig}$ is a solution by Perron's method. We recall once for all that the results in this section are constructed under [H2],[H3] and [H5].

\subsection{A comparison result}
Before investigating \eqref{sysedpmilie1}, we provide some a priori results and a comparison principle for sub. and supersolutions of system \eqref{sysedpmilie1}. To begin with let us show the following: 
\begin{lemma}\lb{lemmeindices}
Let $\vec{u}:=(u^i)_{\ig} \,\mbox{$ (resp. $} \vec{\hat{u}}:=(\hat{u}^i)_{\ig})\,$ be an usc subsolution (resp. sci supersolution) of \eqref{sysedpmilie1}. For any $(\tx)\in\0TR,$ let $\hat{\Gamma}(\tx)$ be the following set:
\[\hat{\Gamma}(\tx):=\lbrace \ig, u^i(\tx)-\hat{u}^i(\tx)=\max_{l\in\Gamma}(u^l(\tx)-\hat{u}^l(\tx))    \rbrace.\]
Then there exists $i_0\in\hat{\Gamma}(\tx)$ such that
\[u^{i_0}(\tx)>u^{i_0+1}(\tx)-\underline{g}_{i_0,i_0+1}(\tx)\mx{ and
}\hat{u}^{i_0}(\tx)<\hat{u}^{i_0+1}(\tx)+\overline{g}_{i_0,i_0+1}(\tx).\]
\end{lemma}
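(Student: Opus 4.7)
The plan is to argue by contradiction. Suppose that no index $i_0 \in \hat{\Gamma}(t,x)$ satisfies both strict inequalities simultaneously; then for each $i\in \hat\Gamma(t,x)$, at least one of the following two conditions holds:
\begin{itemize}
\item[(i)] $u^i(t,x)\leq u^{i+1}(t,x)-\underline g_{i,i+1}(t,x)$, or
\item[(ii)] $\hat u^i(t,x)\geq \hat u^{i+1}(t,x)+\overline g_{i,i+1}(t,x)$.
\end{itemize}
I would first record two automatic consequences of Definition \ref{visco}. Unwinding the nested $\min\{\cdot,\max\{\cdot,\cdot\}\}$ structure, any viscosity supersolution $\vec{\hat u}$ satisfies $\hat u^i\geq L^i(\vec{\hat u})$ pointwise, and, using $\underline g_{i,i+1}+\overline g_{i,i+1}\ge 0$ (so $L^i\le U^i$), any viscosity subsolution $\vec u$ satisfies $u^i\leq U^i(\vec u)$ pointwise. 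These are the only PDE facts that will enter.

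The key step is then a propagation argument inside $\hat\Gamma(t,x)$. Fix $i\in \hat\Gamma(t,x)$. If (i) holds, combining with $\hat u^i\geq \hat u^{i+1}-\underline g_{i,i+1}$ gives
\[
u^i-\hat u^i \;\leq\; (u^{i+1}-\underline g_{i,i+1})-(\hat u^{i+1}-\underline g_{i,i+1})\;=\;u^{i+1}-\hat u^{i+1}.
\]
Since $i\in\hat\Gamma$ forces the reverse inequality, all the intermediate inequalities are equalities; hence $i+1\in\hat\Gamma(t,x)$ and we have simultaneously $u^i=u^{i+1}-\underline g_{i,i+1}$ and $\hat u^i=\hat u^{i+1}-\underline g_{i,i+1}$. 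The symmetric argument in case (ii), using $u^i\le u^{i+1}+\overline g_{i,i+1}$, yields $i+1\in \hat\Gamma(t,x)$ together with $u^i=u^{i+1}+\overline g_{i,i+1}$ and $\hat u^i=\hat u^{i+1}+\overline g_{i,i+1}$. Thus in either case, defining $\varphi_{i,i+1}(t,x):=-\underline g_{i,i+1}(t,x)$ in case (i) and $\varphi_{i,i+1}(t,x):=\overline g_{i,i+1}(t,x)$ in case (ii), we obtain
\[
\hat u^i(t,x)=\hat u^{i+1}(t,x)+\varphi_{i,i+1}(t,x)\qquad\text{and}\qquad i+1\in\hat\Gamma(t,x).
\]

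Iterating this from any starting index shows that $\hat\Gamma(t,x)=\Gamma$ and that the relation above holds for every $i\in\Gamma$. Summing around a full cycle $j\to j+1\to\cdots\to p\to 1\to\cdots\to j$ makes the $\hat u$'s telescope to $0$, leaving
\[
\varphi_{j,j+1}(t,x)+\cdots+\varphi_{p-1,p}(t,x)+\varphi_{p,1}(t,x)+\cdots+\varphi_{j-1,j}(t,x)=0,
\]
which directly contradicts the non-free loop property \eqref{nflp} of assumption [H3]-b). Hence an $i_0\in\hat\Gamma(t,x)$ with both desired strict inequalities must exist. The main subtlety is simply making sure that the one-sided obstacle inequalities $u^i\le U^i(\vec u)$ and $\hat u^i\ge L^i(\vec{\hat u})$ are legitimately available from the viscosity definition without additional hypotheses; the rest is a clean telescoping argument closed by [H3]-b).
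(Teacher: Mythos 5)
Your proposal is correct and follows essentially the same route as the paper: argue by contradiction, use the supersolution inequality $\hat u^i\ge L^i(\vec{\hat u})$ in case (i) and the subsolution inequality $u^i\le U^i(\vec u)$ in case (ii) to force $u^i-\hat u^i\le u^{i+1}-\hat u^{i+1}$, deduce equality and propagation of the index $i+1$ into $\hat\Gamma(t,x)$, then telescope around the full cycle to contradict the non-free loop property \eqref{nflp}. Your extra remark that $u^i\le U^i(\vec u)$ already follows from $L^i\le U^i$ even when the subsolution condition is saturated at the lower obstacle is a small but legitimate refinement of the paper's wording.
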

\begin{proof}
Let $(\tx)\in\0TR$ be fixed. As $\Gamma$ is a finite set then $\hat{\Gamma}$ is not empty. To proceed, we assume, by contradiction that for any $i\in\hat{\Gamma}(\tx)$, either
\begin{equation}\label{cont1}
u^i(\tx)\leq u^{i+1}(\tx)-\underline{g}_{\iplus1}(\tx)
\end{equation}
or
\begin{equation}\label{cont2}
\hat{u}^i(\tx)\geq \hat{u}^{i+1}(\tx)+\overline{g}_{\iplus1}(\tx)
\end{equation}
holds.\\

Assume first that \eqref{cont1} holds true i.e. $u^i(\tx)\leq u^{i+1}(\tx)-\underline{g}_{\iplus1}(\tx)$.
As $\vec{\hat{u}}$ is a supersolution of \eqref{sysedpmilie1}, we deduce that
\begin{equation}\label{cont3}
\hat{u}^i(\tx)\geq \hat{u}^{i+1}(\tx)-\underline{g}_{\iplus1}(\tx)
\end{equation}
By taking into account of \eqref{cont1} we have
\[\hat{u}^{i+1}(\tx)-\hat{u}^i(\tx)\leq \underline{g}_{\iplus1}(\tx)\leq u^{i+1}(\tx)-u^i(\tx)        \]
which implies
\[ u^i(\tx)-\hat{u}^i(\tx)\leq u^{i+1}(\tx)-\hat{u}^{i+1}(\tx) .\]
However as $i\in\hat{\Gamma}(\tx)$, then the previous inequality is an equality and then 
\begin{equation}\label{2etoiles}\hat{u}^{i+1}(\tx)-\hat{u}^i(\tx)=u^{i+1}(\tx)-u^i(\tx)=\underline{g}_{\iplus1}(t,x).
\end{equation}
As a result we deduce that $(i+1)\in\hat{\Gamma}(\tx)$ and also the equality \eqref{2etoiles} holds. 
\medskip

Next if $u^i(\tx)\leq u^{i+1}(\tx)-\underline{g}_{\iplus1}(\tx)$ does not hold, then $u^i(\tx)> u^{i+1}(\tx)-\underline{g}_{\iplus1}(\tx)$.  On the other hand, assume that \eqref{cont2} holds true, i.e., $\hat{u}^i(\tx)\geq \hat{u}^{i+1}(\tx)+\overline{g}_{\iplus1}(\tx)$. Since $u^i$ is a subsolution of \eqref{sysedpmilie1}, we have
\[ u^i(\tx)\leq u^{i+1}(\tx)+\overline{g}_{\iplus1}(\tx) \]
which implies
\[ \hat{u}^{i+1}(\tx)-\hat{u}^i(\tx)\leq -\overline{g}_{\iplus1}(\tx) \leq u^{i+1}(\tx)-u^i(\tx) \]
and then
\[ u^i(\tx)-\hat{u}^i(\tx)\leq u^{i+1}(\tx)-\hat{u}^{i+1}(\tx).\]
However as $i\in\hat{\Gamma}(\tx)$, then the last inequality is an equality and  $(i+1)\in\hat{\Gamma}(\tx)$. Moreover \begin{equation} \label{3etoiles}u^{i+1}(\tx)-u^{i}(\tx)=-\overline{g}_{\iplus1}(\tx)=\hat{u}^{i+1}(\tx)-\hat{u}^{i}(\tx).\end{equation}
It means that  \eqref{cont1} or \eqref{cont2} imply that $(i+1)\in\hat{\Gamma}(\tx)$ and one of the equalities \eqref{2etoiles}, \eqref{3etoiles}. Repeat now this reasonning as many times as necessary (actually $p$ times) to find a loop such that $\sum_{\ig}\varphi_{\iplus1}(\tx)=0$ ($\varphi_{\iplus1}$ is defined  in \eqref{nflp}), which is contradictory to assumption [H3].
\end{proof}

\noindent Next we give the comparison result.
\begin{proposition}\label{comp}
Let $\vec{u}:=(u^i)_{\ig}$ be an usc subsolution (resp. $\vec{w}:=(w^i)_{\ig}$ be a lsc supersolution) of the system \eqref{sysedpmilie1} and for any $\ig$, both $u^i$ and $w^i$ belong to class $\Pi_g$, i.e. there exist two constants $\gamma$ and $C$ such that
\[ \forall \ig, (\tx)\in\0TR ,\; |u^i(\tx)|+|w^i(\tx)|\leq C(1+|x|^\gamma).\]
Then it holds true that:
\begin{equation} \label{compuiwi}u^i(\tx)\leq w^i(\tx), \; \forall \ig, (\tx)\in\0TR. \end{equation}
\end{proposition}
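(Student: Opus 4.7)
The strategy is the standard doubling--of--variables method of Crandall--Ishii--Lions, combined with the selection Lemma \ref{lemmeindices} which identifies, at every point, an index whose obstacle constraints are strictly inactive (so that the parabolic inequality is genuinely available for both $\vec u$ and $\vec w$).

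First, to cope with polynomial growth, I would argue by contradiction and reduce to a bounded situation. Assume that $\max_{i\in\Gamma}\sup_{\otr}(u^i-w^i)>0$, and for each $\lambda,\mu>0$ introduce the auxiliary functions $\tilde u^i(t,x):=u^i(t,x)-\mu e^{-\lambda t}(1+|x|^{2\gamma+2})$ and $\tilde w^i(t,x):=w^i(t,x)+\mu e^{-\lambda t}(1+|x|^{2\gamma+2})$. For $\lambda$ large enough (depending on $\gamma$, the Lipschitz constant of $b,\sigma$, and the Lipschitz constant of the $f^i$'s), $\tilde{\vec u}$ is still a strict subsolution and $\tilde{\vec w}$ is still a strict supersolution of the PDE part of \eqref{sysedpmilie1}, with an error of order $c\mu e^{-\lambda t}(1+|x|^{2\gamma+2})$; this standard perturbation also preserves the obstacle inequalities thanks to non--negativity of $\underline g,\overline g$. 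If the conclusion \eqref{compuiwi} fails, then for $\mu$ small enough $\max_{i}\sup(\tilde u^i-\tilde w^i)$ is still positive, and because of the growth penalty this supremum is attained on a compact set.

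Next I would perform the doubling of variables: for each integer $n\ge1$ set
\begin{equation*}
\Psi_n^i(t,x,y):=\tilde u^i(t,x)-\tilde w^i(t,y)-n|x-y|^2,
\end{equation*}
and pick $(t_n,x_n,y_n)$ and $i_n\in\Gamma$ realising $\max_{i,t,x,y}\Psi_n^i$. Standard arguments give $|x_n-y_n|\to 0$, $n|x_n-y_n|^2\to 0$, $(t_n,x_n,y_n)\to(\hat t,\hat x,\hat x)$ for some point, and (passing to a subsequence) $i_n\equiv i^\star$ with $i^\star$ belonging to the set $\hat\Gamma(\hat t,\hat x)$ of Lemma \ref{lemmeindices} applied to $\tilde{\vec u},\tilde{\vec w}$. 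I then handle the terminal condition: if $\hat t=T$, assumption [H2] together with the consistency condition gives immediately $\tilde u^{i^\star}(T,\hat x)\le h^{i^\star}(\hat x)\le \tilde w^{i^\star}(T,\hat x)$ (up to the $\mu$--terms), contradicting positivity of the supremum for $\mu$ small.

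Thus $\hat t<T$, and Lemma \ref{lemmeindices} supplies an index $i_0\in\hat\Gamma(\hat t,\hat x)$ such that
\begin{equation*}
\tilde u^{i_0}(\hat t,\hat x)>\tilde u^{i_0+1}(\hat t,\hat x)-\underline g_{i_0,i_0+1}(\hat t,\hat x),\qquad \tilde w^{i_0}(\hat t,\hat x)<\tilde w^{i_0+1}(\hat t,\hat x)+\overline g_{i_0,i_0+1}(\hat t,\hat x).
\end{equation*}
By upper/lower semicontinuity these strict inequalities persist at $(t_n,x_n)$ and $(t_n,y_n)$ for all large $n$, so the subsolution (resp.\ supersolution) property of $\tilde{\vec u}$ (resp.\ $\tilde{\vec w}$) at component $i_0$ forces the parabolic Hamiltonian inequality to hold there. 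I then apply the Crandall--Ishii lemma to produce elements $(p_n,q_n,M_n)\in\bar J^+\tilde u^{i_0}(t_n,x_n)$ and $(p_n',q_n',N_n)\in\bar J^-\tilde w^{i_0}(t_n,y_n)$ with $p_n=p_n'$, $q_n=q_n'=2n(x_n-y_n)$, and the usual matrix inequality controlling $M_n-N_n$. Subtracting the two Hamiltonian inequalities gives
\begin{equation*}
0\le f^{i_0}(t_n,x_n,\tilde{\vec u}(t_n,x_n),\sigma^\top q_n)-f^{i_0}(t_n,y_n,\tilde{\vec w}(t_n,y_n),\sigma^\top q_n')+R_n,
\end{equation*}
where $R_n$ gathers the diffusion, drift and perturbation terms and satisfies $\limsup_n R_n\le -c\mu$ for a constant $c>0$ coming from the strict subsolution/supersolution property. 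The key point is now the monotonicity assumption [H5]--b): since $i_0\in\hat\Gamma(\hat t,\hat x)$, we have $\tilde u^j(\hat t,\hat x)-\tilde w^j(\hat t,\hat x)\le \tilde u^{i_0}(\hat t,\hat x)-\tilde w^{i_0}(\hat t,\hat x)$ for every $j\ne i_0$, and the monotonicity in the $j$--th coordinate allows us to bound the $\vec y$ difference in $f^{i_0}$ by the Lipschitz estimate applied only to the $i_0$--th component. Combining this with the Lipschitz property in $(t,x,z)$ and letting $n\to\infty$ yields $0\le -c\mu+o(1)$, the required contradiction.

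The main obstacle, and the reason Lemma \ref{lemmeindices} is so crucial, is exactly this handling of the interlocked obstacles: in a system one has only the \emph{min--max} equation, and without the selection of $i_0$ one cannot infer the parabolic inequality at all. The second delicate point is the coupling through $\vec y$ in $f^{i_0}$, which is why the monotonicity [H5]--b) together with the maximality of the difference at coordinate $i_0$ is used to keep the argument close to the scalar comparison.
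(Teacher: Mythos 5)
Your overall architecture (contradiction, polynomial-growth penalization to localize, doubling of variables, selection of the index $i_0$ via Lemma \ref{lemmeindices}, Crandall--Ishii, monotonicity [H5]-b)) is the same as the paper's, but there is a genuine gap at the final step: the claimed contradiction $0\le -c\mu+o(1)$ does not follow. After subtracting the two Hamiltonian inequalities, the zeroth-order term $f^{i_0}(t_n,x_n,\vec{\tilde u}(t_n,x_n),\cdot)-f^{i_0}(t_n,y_n,\vec{\tilde w}(t_n,y_n),\cdot)$ is what must be controlled. Monotonicity in the coordinates $y^j$, $j\in\Gamma^{-i_0}$, together with the maximality of the difference at $i_0$ (i.e. $\tilde u^j-\tilde w^j\le \Delta:=\tilde u^{i_0}-\tilde w^{i_0}$ for all $j$) only lets you bound this difference above by $C\,p\,\Delta+o(1)$; it does not make it nonpositive, because [H5]-b) says nothing about monotonicity of $f^{i_0}$ in its \emph{own} coordinate $y^{i_0}$. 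Since $\Delta$ is bounded below by a fixed positive quantity (of order $\epsilon_0$) while the strictness margin $c\mu$ coming from your perturbation can only be made small (and in any case is unrelated to $\Delta$), the inequality $0\le Cp\Delta - c\mu + o(1)$ is perfectly consistent and yields no contradiction. The perturbation $\mu e^{-\lambda t}(1+|x|^{2\gamma+2})$ serves to confine the maximum to a compact set; it cannot absorb a first-order-in-$\Delta$ coupling term.

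The missing device is the one the paper supplies in its two-step structure: first prove the comparison under the auxiliary condition \eqref{comps1}, namely that $f^i$ is decreasing in $y^i$ at a rate $\lambda>(p-1)\max_{\ig}C_{f^i}$, which after linearization turns the final inequality into $\lambda\Delta\le C_{f^{i_0}}(p-1)\Delta$ and hence contradicts $\Delta>0$; then reduce the general case to this one by the change of unknown $\hat u^i=e^{\lambda t}u^i$, $\hat w^i=e^{\lambda t}w^i$, which inserts the required $+\lambda v^i$ term into the operator while preserving the sub/supersolution structure (the obstacles being rescaled by $e^{\lambda t}$). Your proof needs this exponential-in-time rescaling (or an equivalent properness assumption on $f^i$ in $y^i$) to close; without it the argument stalls exactly where you assert $\limsup_n R_n\le -c\mu$ suffices.
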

\begin{proof}
Let us show the result by contradiction, i.e. there exists $\epsilon_0>0$ and some $(t_0,x_0)\in [0,T)\times \R^k$ such that
\begin{equation}\label{comp1}
\max_{\ig}(u^i(t_0,x_0)-w^i(t_0,x_0))\geq \epsilon_0.
\end{equation}
Next without loss of generality we assume that 
there exists $R>0$ such that for $t\in [0,T]$, $|x|\ge R$ we have for any $\ig$, 
\begin{equation}\label{comp2}(u^i-w^i)(\tx)<0.
\end{equation}
Actually if \eqref{comp2} does not hold, it is enough to consider the following functions $w^{i,\theta,\mu}$ defined by
\[ w^{i,\theta,\mu}=w^i(\tx)+\theta e^{-\bar \lambda t}(1+|x|^{2\gamma+2}),\; (\tx)\in\0TR \]
which still a supersolution of \eqref{sysedpmilie1} for any $\theta >0$ and $\bar \lambda \ge \lambda_0$ ($\lambda_0$ is fixed). Then to show that 
$u^i-w^{i,\theta,\mu}\le 0$ for any $\ig$ and finally to take the limit as $\theta \rw 0$ to obtain \eqref{compuiwi}. But for any $\ig$, $u^i-w^{i,\theta,\mu}$ is negative uniformly in $t$ when $|x|$ is large enough since $u^i$ belongs to $\Pi_g$ with polynomial exponent $\g$.

To proceed, let \eqref{comp1}-\eqref{comp2} be fulfilled. Then 
\begin{align*}
\max_{(\tx)\in\0TR} \max_{\ig}\lbrace u^i(\tx)-w^i(\tx) \rbrace  &=\max_{(\tx)\in[0,T]\times B(0,R)}\max_{\ig}\lbrace u^i(\tx)-w^i(\tx)\rbrace\\
&:=\max_{\ig}(u^i-w^i)(t^*,x^*)\geq \epsilon_0>0
\end{align*}where $B(0,R)$ is the ball centered in the origin with radius $R$. Note that $t^*<T$ since $u^i(T,x)\leq h^i(x)\leq w^i(T,x)$.
\ms

The proof now will be divided into two steps:
\ms

\noindent\underline{\textit{Step 1}}: To begin with, we introduce the following auxiliary condition: There exists $\lambda>(p-1)\max_{\ig}C_{f^i}$ such that for any $\ig, (\tx,\vec{y},z)\in\0TR\times\mathbb{R}^{p+d},$ and $(v^1,v^2)\in\mathbb{R}^2$ such that $v^1\geq v^2$ we have
\begin{equation}\label{comps1}
f^i(\tx,[\vec{y}^{-i},v^1],z)-f^i(\tx,[\vec{y}^{-i},v^2],z)\leq -\lambda(v^1-v^2)
\end{equation}
and where $ C_{f^i}$ is the Lipschitz constant of $f^i$ w.r.t. $\vec{y}$.
\ms

So let $i_0$ be an element of $\hat{\Gamma}(t^*,x^*)$ such that
\begin{equation}\lb{inegalite1} u^{i_0}(t^*,x^*)>u^{i_0+1}(t^*,x^*)-\underline{g}_{i_0,i_0+1}(t^*,x^*) \end{equation}
and
\begin{equation}\lb{inegalite2} w^{i_0}(t^*,x^*)<w^{i_0+1}(t^*,x^*)+\overline{g}_{i_0,i_0+1}(t^*,x^*) \end{equation}
which exists by Lemma \ref{lemmeindices}. Next we define the following function: For any $n\geq 1$,
\[ \Phi_n^{i_0}(t,x,y):=(u^{i_0}(\tx)-w^{i_0}(t,y))-\phi_n(t,x,y),\quad (\tx,y)\in [0,T]\times\mathbb{R}^{k+k} \]
where $$\phi_n(t,x,y) :=n|x-y|^{2\gamma+2}+|x-x^*|^{2\gamma+2}+(t-t^*)^2.$$
The function $\Phi_n^{i_0}(t,x,y)$ is usc, then we can find a triple $(t_n,x_n,y_n)\in [0,T]\times \bar B(0,R)^2$ such that
\[ \Phi_n^{i_0}(t_n,x_n,y_n)=\max_{(\tx,y)\in [0,T]\times \bar B(0,R)^2} \Phi_n^{i_0}(t,x,y)\]
($\bar B(0,R)$ is the closure of $B(0,R)$). Then we have
\[ \Phi_n^{i_0}(t^*,x^*,x^*)\leq \Phi_n^{i_0}(t_n,x_n,y_n). \]
From which we deduce that
\begin{align}\label{comp3}
\begin{split}
\Phi_n^{i_0}(t^*,x^*,x^*)& =u^{i_0}(t^*,x^*)-w^{i_0}(t^*,x^*)\\
&\leq \Phi_n^{i_0}(t_n,x_n,y_n)\\
&=u^{i_0}(t_n,x_n)-w^{i_0}(t_n,y_n)-\phi_n(t_n,x_n,y_n)\\&\leq u^{i_0}(t_n,x_n)-w^{i_0}(t_n,y_n)\le C_R
\end{split}
\end{align}
($C_R$ is a constant which may depend on $R$) since the sequences $(t_n)_n$, $(x_n)_n$ and $(y_n)_n$ are bounded and $u^{i_0}$ and $w^{i_0}$ are of polynomial growth. As a result $(x_n-y_n)_{n\geq 0}$ converges to $0$. On the other hand, by boundedness of the sequences, we can find a subsequence, which we still denote by $(t_n,x_n,y_n)_n$, converging to a point denoted $(\hat{t},\hat{x},\hat{x})$. By \eqref{comp3} it satisfies:
\begin{align} \nn u^{i_0}(t^*,x^*)-w^{i_0}(t^*,x^*)&\leq \liminf_n 
(u^{i_0}(t_n,x_n)-w^{i_0}(t_n,y_n)) \\\nn&\le \limsup_n 
(u^{i_0}(t_n,x_n)-w^{i_0}(t_n,y_n)) \\&\nn\le
\limsup_n 
u^{i_0}(t_n,x_n)-\liminf_nw^{i_0}(t_n,y_n) \\&
\leq u^{i_0}(\hat{t},\hat{x})-w^{i_0}(\hat{t},\hat{x})
\end{align}
since $u^{i_0}$ (resp. $w^{i_0}$) is usc (resp. lsc).
As the maximum of $u^{i_0}-w^{i_0}$ on $\0TR$ is reached in $(t^*,x^*)$, then 
$u^{i_0}(\hat{t},\hat{x})-w^{i_0}(\hat{t},\hat{x})=u^{i_0}(t^*,x^*)-w^{i_0}(t^*,x^*)$ and consequently the sequence $(u^{i_0}(t_n,x_n)-w^{i_0}(t_n,y_n))_n$ converges to $u^{i_0}(t^*,x^*)-w^{i_0}(t^*,x^*)$. Next as we have 
\begin{align}\label{comp3x}
\begin{split}
\Phi_n^{i_0}(t^*,x^*,x^*)& =u^{i_0}(t^*,x^*)-w^{i_0}(t^*,x^*)\\
&\leq \Phi_n^{i_0}(t_n,x_n,y_n)\\
&=u^{i_0}(t_n,x_n)-w^{i_0}(t_n,y_n)-\phi_n(t_n,x_n,y_n)
\end{split}
\end{align}
then $(\phi_n(t_n,x_n,y_n))_n$ converges to 0 as $n\rw \infty$ and then $(t_n)_n$, $(x_n)_n$ and $(y_n)$ converge respectively to $t^*$, $x^*$ and $x^*$. Finally 
\begin{align*}
\liminf_nu^{i_0}(t_n,x_n)&= u^{i_0}(t^*,x^*)-w^{i_0}(t^*,x^*)+\liminf_nw^{i_0}(t_n,y_n)\\
&\geq u^{i_0}(t^*,x^*)\geq \limsup_nu^{i_0}(t_n,x_n)
\end{align*}
which implies that the sequence $(u^{i_0}(t_n,x_n))_n$ converges to $u^{i_0}(t^*,x^*)$ and then also the sequence $(w^{i_0}(t_n,y_n))_n$ converges to $w^{i_0}(t^*,x^*).$
\medskip

Next, we recall the definition of $i_0\in\hat{\Gamma}(t^*,x^*)$. By \eqref{inegalite1}-\eqref{inegalite2}, for $n$ large enough we can find a subsequence $(t_n,x_n)_n$ such that
\begin{equation}\label{inegalitevisco1} u^{i_0}(t_n,x_n)>u^{i_0+1}(t_n,x_n)-\underline{g}_{i_0i_0+1}(t_n,x_n) \end{equation}
and
\begin{equation}\label{inegalitevisco2} w^{i_0}(t_n,y_n)<w^{i_0+1}(t_n,y_n)+\overline{g}_{i_0i_0+1}(t_n,y_n). \end{equation}Next we apply Crandall-Ishii-Lions's Lemma (see e.g. \cite{FS06}, pp.216) and then there exist $(p_u^n,q_n^n,M_u^n)\in \bar{J}^+(u^{i_0})(t_n,x_n) \;\mbox{$ and $} \;(p_w^n,q_w^n,M_w^n)\in\bar{J}^-(w^{i_0})(t_n,y_n)$ such that
\begin{equation}\label{comp5}
\left\lbrace
\begin{array}{l}
p_u^n-p_w^n=\partial_t\phi_n(t_n,x_n,y_n)=2(t_n-t^*),\\\
q_u^n=\partial_x\phi_n(t_n,x_n,y_n),\\\
q_w^n=-\partial_y\phi_n(t_n,x_n,y_n) \mbox{ and }\\\
\begin{pmatrix}
 M_u^n & 0 \\\
0 & -M_w^n
\end{pmatrix}
\leq A_n+\dfrac{1}{2n}A_n^2
\end{array}
\right.
\end{equation}
where $A_n=D^2_{xy}\phi_n(t_n,x_n,y_n)$. Next by taking into account that $(u^i)_{\ig}$ and $(w^i)_{\ig}$ are respectively subsolution and supersolution of \eqref{sysedpmilie1} and the inequalities 
\eqref{inegalitevisco1}-\eqref{inegalitevisco2}, we obtain
\begin{equation}\label{comp6}
-p_u^n-b(t_n,x_n)^\top q_u^n-\dfrac{1}{2}Tr[(\sigma\sigma^\top(t_n,x_n))(t_n,x_n)M_u^n]-f^{i_0}(t_n,x_n,(u^l(t_n,x_n))_{l\in\Gamma},\sigma(t_n,x_n)^\top q_u^n)\leq 0
\end{equation}
and
\begin{equation}\label{comp7}
-p_w^n-b(t_n,y_n)^\top q_w^n-\dfrac{1}{2}Tr[(\sigma\sigma^\top(t_n,y_n))(t_n,y_n)M_w^n]-f^{i_0}(t_n,y_n,(w^l(t_n,y_n))_{l\in\Gamma},\sigma(t_n,y_n)^\top q_w^n)\geq 0.
\end{equation}
By taking the difference of \eqref{comp6} and \eqref{comp7}, one deduces that
\begin{align*}
-(p_u^n-p_w^n)-(b(t_n,x_n)^\top q_u^n-b(t_n,y_n)^\top q_w^n)-\dfrac{1}{2}Tr[\lbrace \sigma\sigma^\top(t_n,x_n)M_u^n-\sigma\sigma^\top(t_n,y_n)M_w^n \rbrace]\\
-\lbrace f^{i_0}(t_n,x_n,(u^l(t_n,x_n))_{l\in\Gamma},\sigma(t_n,x_n)^\top q_u^n)-f^{i_0}(t_n,y_n,(w^l(t_n,y_n))_{l\in\Gamma},\sigma(t_n,y_n)^\top q_w^n)\rbrace \leq 0.
\end{align*}
Combining with \eqref{comp5}, there exists some appropriate $\rho_n$ with $\limsup_{n\rwi} \rho_n\leq 0$ such that the last inequality yields the following one:
\[ -\lbrace f^{i_0}(t_n,x_n,(u^l(t_n,x_n))_{l\in\Gamma},\sigma(t_n,x_n)^\top q_u^n)-f^{i_0}(t_n,x_n,(w^l(t_n,y_n))_{l\in\Gamma},\sigma(t_n,x_n)^\top q_u^n)\rbrace \leq \rho_n \]
Next by linearising $f^{i_0}$ and condition \eqref{comps1} we obtain
\begin{equation}\label{comp8}
\lambda(u^{i_0}(t_n,x_n)-w^{i_0}(t_n,y_n))-\sum_{k\in\Gamma^{-i_0}}\Theta_n^k(u^k(t_n,x_n)-w^k(t_n,y_n))\leq \rho_n
\end{equation}
where $\Theta_n^k$ is the increment rate of $f^{i_0}$ w.r.t. $y^k$, which is uniformly bounded w.r.t. $n$ and is non negative by the monotonicity assumption of $f^i$. Therefore \eqref{comp8} becomes
\begin{align*}
\lambda(u^{i_0}(t_n,x_n)-w^{i_0}(t_n,y_n))&\leq \sum_{k\in\Gamma^{-i_0}}\Theta_n^k(u^k(t_n,x_n)-w^k(t_n,y_n))+\rho_n\\
&\leq C_{f^{i_0}}\sum_{k\in\Gamma^{-i_0}}(u^k(t_n,x_n)-w^k(t_n,y_n))^++\rho_n.
\end{align*}
Then by taking $n\to\infty$ the inequality yields
\begin{align*}
\lambda(u^{i_0}(t^*,x^*)-w^{i_0}(t^*,x^*))&\leq \limsup_n C_{f^{i_0}}[\sum_{k\in\Gamma^{-i_0}}(u^k(t_n,x_n)-w^k(t_n,y_n))^+]\\
&\leq C_{f^{i_0}}[\sum_{k\in\Gamma^{-i_0}}(\limsup_n(u^k(t_n,x_n)-w^k(t_n,y_n)))^+ ]\\
&\leq C_{f^{i_0}}[\sum_{k\in\Gamma^{-i_0}}(u^k(t^*,x^*)-w^k(t^*,x^*))^+]
\end{align*}
Next as $i_0 \in\hat{\Gamma}(t^*,x^*)$, we deduce that
\[ \lambda(u^{i_0}(t^*,x^*)-w^{i_0}(t^*,x^*))\leq C_{f^{i_0}}(p-1)(u^{i_0}(t^*,x^*)-w^{i_0}(t^*,x^*)) \]
which is contradictory with the definiton of $\lambda$ given in \eqref{comps1}. As a 
consequence for any $\ig, u^i\leq w^i.$\\

\noindent\textit{\underline{Step 2}: the general case} \\

\nd For any arbitrary $\lambda\in\R$, let us define
\begin{align*}
\hat{u}^i(t,x):=e^{\lambda t}u^i(t,x)\mbox{ and }
\hat{w}^i(\tx):=e^{\lambda t}w^i(\tx).
\end{align*}
Note that $(\hat{u}^i)_{\ig}$  and  $(\hat{w}^i)_{\ig}$ is respectively the subsolution and the supersolution of the following system of PDEs: for any $\ig$ and $\txsp$,
\begin{align*}
&\min\lbrace v^i(\tx)-v^{i+1}(\tx)+e^{\lambda t}\underline{g}_{\iplus1}(\tx);\max[v^i(\tx)-v^{i+1}(\tx)-e^{\lambda t}\overline{g}_{\iplus1}(\tx);\\\\
&-\partial_t v^i(\tx)-\mathcal{L}^Xv^i(\tx)+\lambda v^i(\tx)-e^{\lambda t}f^i(\tx, (e^{-\lambda t}v^l(\tx))_{l\in\Gamma},e^{-\lambda t}\sigma^\top(\tx)D_xv^i(\tx)]\rbrace=0
\end{align*}
and $v^i(T,x)=e^{\lambda T}h_i(x)$. For $\lambda$ large enough, the condition \eqref{comps1} holds, then we go back to the result in Step 1 and we obtain, for any $\ig, \hat{u}^i\leq \hat{w}^i$, which also yields $u^i\leq w^i$. The proof of comparison is now complete. 
\end{proof}
\subsection{Existence and uniqueness of viscosity solution of \eqref{sysedpmilie1}}
Let us recall $(\bar{v}^i)_{\ig}$ and 
$(\bar{v}^{i,m})_{\ig}$ the functions defined in Proposition \ref{pena}. We firstly prove that $(\bar{v}^i)_{\ig}$ is a subsolution of \eqref{sysedpmilie1}, then we show that for a fixed $m_0$, $(\bar{v}^{i,m_0})_{\ig}$ is a supersolution of \eqref{sysedpmilie1}, finally by Perron's method we show that $(\bar{v}^i)_{\ig}$ is the unique solution of \eqref{sysedpmilie1}.
\begin{proposition}\label{barv}
The family $(\bar{v}^i)_{\ig}$ is a viscosity subsolution of \eqref{sysedpmilie1}.
\end{proposition}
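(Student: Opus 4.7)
The plan is to combine the fact that $(\bar v^i)_{\ig}$ is the pointwise decreasing limit of the continuous viscosity solutions $(\bar v^{i,m})_{\ig}$ of the penalized systems \eqref{vm} with the standard half-relaxed limit technique. As the decreasing limit of continuous functions, each $\bar v^i$ is upper semicontinuous, so $(\bar v^i)^* = \bar v^i$; moreover the terminal condition $(\bar v^i)^*(T,x) \leq h^i(x)$ is automatic, since $\bar v^{i,m}(T,\cdot) \equiv h^i$ for every $m$.

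For the viscosity inequality at an interior point, fix $(t_0,x_0) \in [0,T)\times\mathbb{R}^k$ and a smooth $\phi$ at which $\bar v^i - \phi$ attains a strict local maximum equal to zero. By the half-relaxed limit argument, there exist $m_j \to \infty$ and $(t_j,x_j) \to (t_0,x_0)$ such that $\bar v^{i,m_j} - \phi$ has a local maximum at $(t_j,x_j)$ with $\bar v^{i,m_j}(t_j,x_j) \to \bar v^i(t_0,x_0)$. The subsolution property of $\bar v^{i,m_j}$ in \eqref{vm} then gives, for each $j$ and up to subsequence, either
\begin{equation*}
\text{(I)}\quad \bar v^{i,m_j}(t_j,x_j) \leq \bar v^{i+1,m_j}(t_j,x_j) - \underline g_{i,i+1}(t_j,x_j),
\end{equation*}
or
\begin{equation*}
\text{(II)}\quad -\partial_t\phi(t_j,x_j) - \mathcal{L}^X\phi(t_j,x_j) - \bar f^{i,m_j}\bigl(t_j,x_j,(\bar v^{l,m_j}(t_j,x_j))_{l\in\Gamma},\sigma^\top D_x\phi(t_j,x_j)\bigr) \leq 0.
\end{equation*}

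Case (I) passes directly to the limit via $\limsup_j \bar v^{i+1,m_j}(t_j,x_j) \leq (\bar v^{i+1})^*(t_0,x_0) = \bar v^{i+1}(t_0,x_0)$ and the continuity of $\underline g_{i,i+1}$, giving $\bar v^i(t_0,x_0) \leq L^i(\vec{\bar v})(t_0,x_0)$, so the outer $\min$ vanishes through its first term. In Case (II), expanding $\bar f^{i,m_j} = f^i - m_j(y^i - y^{i+1} - \bar g_{i,i+1})^+$, the uniform-in-$j$ bound on $\partial_t\phi + \mathcal{L}^X\phi + f^i(\cdots)$ (granted by local boundedness of $\phi$ and its derivatives, the uniform polynomial growth of the $(\bar v^{l,m})$, and the Lipschitz character of $f^i$) forces the quantity $m_j(\bar v^{i,m_j}(t_j,x_j) - \bar v^{i+1,m_j}(t_j,x_j) - \bar g_{i,i+1}(t_j,x_j))^+$ to stay bounded; dividing by $m_j$, passing to the limit, and combining with $\limsup_j \bar v^{i+1,m_j}(t_j,x_j) \leq \bar v^{i+1}(t_0,x_0)$ yields $\bar v^i(t_0,x_0) \leq U^i(\vec{\bar v})(t_0,x_0)$. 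Dropping the non-negative penalty in (II) also leaves $-\partial_t\phi - \mathcal{L}^X\phi - f^i(t_j,x_j,(\bar v^{l,m_j}(t_j,x_j))_{l\in\Gamma},\sigma^\top D_x\phi) \leq 0$; exploiting the monotonicity of $f^i$ in $y^l$ for $l \neq i$ (assumption [H5]-b)), together with $\limsup_j \bar v^{l,m_j}(t_j,x_j) \leq \bar v^l(t_0,x_0)$, I replace each $\bar v^{l,m_j}(t_j,x_j)$ by $\bar v^l(t_0,x_0) + \varepsilon$ (valid for $j$ large, and only making $f^i$ larger), pass $j \to \infty$ by continuity of $f^i$, then let $\varepsilon \downarrow 0$ to obtain $-\partial_t\phi(t_0,x_0) - \mathcal{L}^X\phi(t_0,x_0) - f^i(t_0,x_0,(\bar v^l(t_0,x_0))_{l\in\Gamma},\sigma^\top D_x\phi(t_0,x_0)) \leq 0$. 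Together with $\bar v^i(t_0,x_0) \leq U^i(\vec{\bar v})(t_0,x_0)$, this makes the inner $\max$ non-positive, and hence the outer $\min$ as well.

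The principal obstacle is precisely this last step of Case (II): transferring the nonlinear term $f^i(\cdots,(\bar v^{l,m_j}(t_j,x_j))_{l\in\Gamma},\cdots)$ to the limit point without a priori knowledge of the continuity of each $\bar v^l$ at $(t_0,x_0)$. The monotonicity hypothesis [H5]-b) is indispensable, as it is exactly what permits the one-sided comparison with $\bar v^l(t_0,x_0) + \varepsilon$ that, after letting $\varepsilon \downarrow 0$, delivers the desired subsolution inequality.
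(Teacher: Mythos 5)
Your proposal is correct and follows essentially the same route as the paper: both rest on the half-relaxed limit technique applied to the decreasing penalized solutions $(\bar v^{i,m})_{\ig}$ of \eqref{vm} (you phrase it with test functions at a strict maximum, the paper with converging jets via Lemma 6.1 of \cite{CIL92}), with the same three key steps — dichotomy on the lower-obstacle term, boundedness of the penalty term $m(\cdot)^+$ to recover $\bar v^i \le U^i(\vec{\bar v})$ in the limit, and the monotonicity hypothesis [H5]-b) combined with $\limsup_j \bar v^{l,m_j}(t_j,x_j)\le \bar v^l(t_0,x_0)$ to pass the coupled argument of $f^i$ to the limit. No substantive difference or gap.
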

\begin{proof}
We first recall that $\forall \ig, \bar{v}^i:=\lim_{m\to\infty}\bar{v}^{i,m},$ is usc function since the sequence $(\bar{v}^{i,m})_{m\geq 0}$ is decreasing and $(\bar{v}^{i,m})_{\ig}$ is continuous. Then thanks to the definition we have $\bar{v}^*=\bar{v}^i$, hence when $t=T$ we have $\bar{v}^i(T,x)=\lim_{m\to\infty}\bar{v}^{i,m}(T,x)=h^i(x).$\\
Next let us recall Definition \ref{visco}. For any $(\tx)\in[0,T)\times \R^k$, $\ig, (\underline{p},\underline{q},\underline{M})\in \bar J^+\bar{v}^i(\tx)$, we shall prove either
\begin{equation}\label{barv1}
\bar{v}^i(\tx)-L^i(\vec{\bar{v}})(\tx)\leq 0
\end{equation}
or
\begin{equation}\label{barv2}
\begin{array}{l}
\max[\bar{v}^i(\tx)-U^i(\vec{\bar{v}})(\tx);\\
\qquad-\underline{p}-b^\top(\tx)\underline{q}-\dfrac{1}{2}Tr(\sigma\sigma^\top)(\tx)\underline{M})-f^i(\tx, (\bar{v}^l(\tx))_{l\in\Gamma},\sigma^\top(\tx).\underline{q})]\leq 0.
\end{array}
\end{equation}
To proceed, we first assume that there exists $\epsilon_0>0$ such that
\[ \bar{v}^i(\tx)\geq \bar{v}^{i+1}(\tx)-\underline{g}_{\iplus1}(\tx)+\epsilon_0 \]
then we need to prove \eqref{barv2}.\\
As for any $\ig, (\bar{v}^{i,m})_{m\geq 0}$ decreasingly converges to $\bar{v}^i$, then there exists $m_0$ such that for any $m\geq m_0$ we have
\[ \bar{v}^{i,m}(\tx)\geq \bar{v}^{i+1,m}(\tx)-\underline{g}_{\iplus1}(\tx)+\dfrac{\epsilon_0}{2} \]
By the continuity of $(\bar{v}^{i,m})_{\ig}$ and $\underline{g}_{\iplus1}$, we can find a neighbourhood $O_m$ of $(\tx)$ such that
\begin{equation}\label{barv3}
\bar{v}^{i,m}(t',x')\geq \bar{v}^{i+1,m}(t',x')-\underline{g}_{\iplus1}(t',x')+\dfrac{\epsilon_0}{4},\,\fr (t',x')\in O_m.
\end{equation}
Next by Lemma 6.1 in \cite{CIL92} there exists a subsequence $(t_k,x_k)_{k\geq 0}$ such that
\[ (t_k,x_k)\to_{k\to \infty} (\tx)\; \text{and}\; \lim_{k\to\infty}\bar{v}^{i,k}(t_k,x_k)=\bar{v}^i(\tx). \]
In addition we can also find a sequence which we still denote by $(p_k,q_k,M_k)\in\bar{J}^+\bar{v}^{i,k}(t_k,x_k)$ such that
\[ \lim_{k\to\infty} (p_k,q_k,M_k)=(\underline{p},\underline{q},\underline{M}) \]
As the sequence $(t_k,x_k)$ can be chosen in the neighbourhood $O_k$, by applying the fact that $(\bar{v}^{i,k})_{\ig}$ is the unique viscosity solution of the following system: For any $\ig$, 
\begin{align}\label{barv4}
\begin{split}
&\min\lbrace \bar{v}^{i,m}(\tx)-L^i((\bar{v}^{l,m})_{l\in\Gamma})(\tx);\\
&-\partial_t\bar{v}^{i,m}(\tx)-b^\top(\tx)D_x\bar{v}^{i,m}(\tx)-f^{i,m}(\tx,(\bar{v}^{l,m}(\tx))_{l\in\Gamma},\sigma^\top(\tx)D_x\bar{v}^{i,m}(\tx))\rbrace=0\\&
\bar{v}^{i,m}(T,x)=h_i(x),
\end{split}
\end{align}we obtain
\begin{align}\label{barv5}
\begin{split}
-p_k-b^\top(t_k,x_k).q_k-\dfrac{1}{2}Tr(\sigma\sigma^\top(t_k,x_k)M_k)-f^{i,k}(\tx,(\bar{v}^{l,k}(t_k,x_k))_{l\in\Gamma},\sigma^\top(t_k,x_k)q_k)\leq 0
\end{split}
\end{align}
where $f^{i,k}(\tx,(v^l(\tx))_{l\in\Gamma},z):=f^i(\tx,(v^l(\tx))_{l\in\Gamma},z)-k(v^i(\tx)-U^i(\vec{v})(\tx))^+$.\\
Moreover as the sequence $(t_k,x_k,p_k,q_k,M_k)_k$ is bounded and $(\bar{v}^{i,m})_{\ig}$ is uniformly of polynomial growth, then we deduce from \eqref{barv5} that
\[ \epsilon_k:=( \bar{v}^{i,k}(t_k,x_k)-\bar{v}^{i+1,k}(t_k,x_k)-\bar{g}_{\iplus1}(t_k,x_k) )^+\to_{k\to\infty} 0 \]
However for any fixed $(\tx)$ and $k_0, (\bar{v}^{i,k}(t,x))_{k\geq k_0}$ is decreasing, then for $k\geq k_0,$
\begin{align*}
\bar{v}^{i,k}(t_k,x_k)&\leq \bar{v}^{i+1,k}(t_k,x_k)+\bar{g}_{\iplus1}(t_k,x_k)+\epsilon_k\\
&\leq \bar{v}^{i+1,k_0}(t_k,x_k)+\bar{g}_{\iplus1}(t_k,x_k)+\epsilon_k
\end{align*}
As $\bar{v}^{i,k_0}$ is continuous, by taking $k\to\infty$ we obtain that
\[ \lim_{k\to\infty}\bar{v}^{i,k}(t_k,x_k)=\bar{v}^i(\tx)\leq \bar{v}^{i+1,k_0}(\tx)+\bar{g}_{\iplus1}(\tx). \]
We then take $k_0\to\infty$ yielding
\begin{equation}\label{barv6}
\bar{v}^i(\tx)\leq \bar{v}^{i+1}(\tx)+\bar{g}_{\iplus1}(\tx).
\end{equation}

In the second place we consider a subsequence $(k_l)$ of $(k)$ such that for any $a\in\Gamma, (\bar{v}^{a,k_l}(t_{k_l},x_{k_l}))_l$ converges, then by taking $l\to\infty$ in \eqref{barv5} we obtain
\[ \lim_{l\to\infty}\lbrace  -p_{k_l}-b(t_{k_l},x_{k_l})q_{k_l}-\dfrac{1}{2}Tr(\sigma\sigma^\top(t_{k_l},x_{k_l})M_{k_l})-f^{i}(t_{k_l},x_{k_l},(\bar{v}^{a,k_l}(t_{k_l},x_{k_l}))_{a\in\Gamma},\sigma^\top(t_{k_l},x_{k_l}).q_{k_l}) \rbrace\leq 0 .\]
Then we deduce that
\begin{align}\label{barv7}
\begin{split}
&-\underline{p}-b^\top(\tx)\underline{q}-\dfrac{1}{2}Tr(\sigma\sigma^\top(\tx)\underline{M})\\
&\leq \lim_{l\to\infty} f^{i}(t_{k_l},x_{k_l},(\bar{v}^{a,k_l}(t_{k_l},x_{k_l}))_{a\in\Gamma},\sigma^\top(t_{k_l},x_{k_l})q_{k_l})\\
&=f^i(\tx,\lim_{l\to\infty}(\bar{v}^{a,k_l}(t_{k_l},x_{k_l}))_{a\in\Gamma},\sigma^\top(\tx)\underline{q})\\&\le 
f^i(\tx,(\bar{v}^{a}(t,x))_{a\in\Gamma},\sigma^\top(\tx)\underline{q}).
\end{split}
\end{align}
The last inequality holds true by the monotonicity assumption (H5) of $f^i$ and the fact that for any $a\in \G$, $\bar v^a$ verifies 
\[  \bar{v}^a(\tx)=\bar{v}^{*,a}(\tx)=\limsup_{(t',x')\to(\tx),m\to\infty} \bar{v}^{a,m}(t',x'),\; (\tx)\in\0TR \]
Thus for any $a\in\Gamma^{-i}$ we have
\[ \bar{v}^{a}(\tx)\geq \lim_{l\in\infty}\bar{v}^{a,k_l}(t_{k_l},x_{k_l})  \]
and
\[
\bar{v}^i(\tx)=\lim_{l\to\infty}\bar{v}^{i,k_l}(t_{k_l},x_{k_l}).\]
Thus \eqref{barv7} becomes
\begin{equation}\label{barv9}
-\underline{p}-b^\top(\tx)\underline{q}-\dfrac{1}{2}Tr(\sigma\sigma^\top(\tx)\underline{M})\leq f^i(\tx,(\bar{v}^a(\tx))_{a\in\Gamma},\sigma^\top(\tx).\underline{q}).
\end{equation}
Hence under \eqref{barv6} and \eqref{barv9}, \eqref{barv2} is satisfied, then $(\bar{v}^i)_{\ig}$ is a viscosity subsolution  of \eqref{sysedpmilie1}.
\end{proof}

\begin{proposition}\label{vsuper}
Let us fix $m_0\in\mathbb{N}$. Then the family $(\bar{v}^{i,m_0})_{\ig}$ is a viscosity supersolution of \eqref{sysedpmilie1}.
\end{proposition}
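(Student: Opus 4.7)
The plan is to read off the supersolution inequality for \eqref{sysedpmilie1} directly from the viscosity characterization \eqref{vm} of $(\bar v^{i,m_0})_{\ig}$ provided by Proposition \ref{pena}-d). First, since the functions $\bar v^{i,m_0}$ are continuous (Proposition \ref{pena}-c)), the lower semicontinuous envelope $\bar v^{i,m_0}_*$ coincides with $\bar v^{i,m_0}$ itself, so there is nothing to check concerning the distinction between $\vec{\bar v}^{m_0}_*$ and $\vec{\bar v}^{m_0}$. The terminal condition $\bar v^{i,m_0}_*(T,x)=h^i(x)\geq h^i(x)$ is immediate.

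Next, fix $(t,x)\in[0,T)\times\R^k$ and $(p,q,M)\in \bar J^-\bar v^{i,m_0}(t,x)$. Because $(\bar v^{i,m_0})_{\ig}$ solves \eqref{vm} in viscosity sense, both
\[
\bar v^{i,m_0}(t,x)-L^i(\vec{\bar v}^{m_0})(t,x)\ \geq\ 0 \qquad \text{(I)}
\]
and
\[
-p-b^\top(t,x)q-\tfrac12\mathrm{Tr}[(\sigma\sigma^\top)(t,x)M]-f^{i,m_0}\bigl(t,x,\vec{\bar v}^{m_0}(t,x),\sigma^\top(t,x)q\bigr)\ \geq\ 0 \qquad \text{(II)}
\]
hold, where by construction
\[
f^{i,m_0}(t,x,\vec y,z)=f^i(t,x,\vec y,z)-m_0\bigl(y^i-y^{i+1}-\bar g_{\iplus1}(t,x)\bigr)^+ .
\]

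To derive the supersolution inequality for \eqref{sysedpmilie1}, I distinguish two cases according to the sign of $\bar v^{i,m_0}(t,x)-U^i(\vec{\bar v}^{m_0})(t,x)$. If this quantity is $\geq 0$, then the inner $\max$ in Definition \ref{visco} is already $\geq 0$, and combining with (I) yields the desired $\min\{\,\cdot\,;\max\{\,\cdot\,\}\}\geq 0$. If instead $\bar v^{i,m_0}(t,x)-U^i(\vec{\bar v}^{m_0})(t,x)<0$, i.e.\ $\bar v^{i,m_0}(t,x)<\bar v^{i+1,m_0}(t,x)+\bar g_{\iplus1}(t,x)$, then the penalty term vanishes: $\bigl(\bar v^{i,m_0}(t,x)-\bar v^{i+1,m_0}(t,x)-\bar g_{\iplus1}(t,x)\bigr)^+=0$, so $f^{i,m_0}(t,x,\vec{\bar v}^{m_0}(t,x),\sigma^\top(t,x)q)=f^{i}(t,x,\vec{\bar v}^{m_0}(t,x),\sigma^\top(t,x)q)$, and (II) therefore reads
\[
-p-b^\top(t,x)q-\tfrac12\mathrm{Tr}[(\sigma\sigma^\top)(t,x)M]-f^{i}\bigl(t,x,\vec{\bar v}^{m_0}(t,x),\sigma^\top(t,x)q\bigr)\ \geq\ 0,
\]
so again the inner $\max$ is $\geq 0$ and, with (I), the required $\min$ inequality holds.

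I do not expect any serious obstacle here: the argument is essentially the tautology that the penalization $-m_0(y^i-y^{i+1}-\bar g_{\iplus1})^+$ is inactive precisely on $\{\bar v^{i,m_0}\leq U^i(\vec{\bar v}^{m_0})\}$, which is exactly the region where the upper-obstacle alternative of \eqref{sysedpmilie1} is not already satisfied. The only mild care needed is to verify the terminal condition and to use continuity of $\bar v^{i,m_0}$ so that semicontinuous envelopes play no role; both points follow from Proposition \ref{pena}.
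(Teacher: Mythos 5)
Your proof is correct, but it takes a genuinely different and more direct route than the paper. You read the supersolution inequality straight off the one-obstacle viscosity characterization \eqref{vm} from Proposition \ref{pena}-d): the $\min$-form of \eqref{vm} gives simultaneously $\bar v^{i,m_0}\ge L^i(\vec{\bar v}^{m_0})$ and the PDE inequality with the penalized driver $f^{i,m_0}$, and your case split on the sign of $\bar v^{i,m_0}-U^i(\vec{\bar v}^{m_0})$ correctly exploits the pointwise identity $f^{i,m_0}=f^i$ on the set where the upper constraint is strict (when it is not strict, the $\max$ is nonnegative for free). The paper instead works probabilistically: it introduces the auxiliary upper barrier $\tilde U^{i,m_0}=\bar Y^{i,m_0}\vee(\bar Y^{i+1,m_0}+\bar g_{i,i+1})$ and the increasing process $\bar K^{i,m_0,-}$ built from the penalty term, recognizes $(\bar Y^{i,m_0},\bar Z^{i,m_0},\bar K^{i,m_0,\pm})$ as the solution of a doubly reflected BSDE with completely separated barriers, invokes the Dynkin-game representation and the viscosity characterization of \cite{HH05} to get a $\min$--$\max$ inequality with obstacle $\tilde U$, and finally relaxes $\tilde U$ to $U$ via $a-a\vee b\le a-b$. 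Your argument is shorter and purely at the PDE level, at the cost of leaning entirely on the cited characterization \eqref{vm}; the paper's detour has the side benefit of exhibiting the Dynkin-game structure of the penalized scheme, which it reuses elsewhere (e.g.\ in the representation \eqref{exist2}). Both establish the same conclusion, and I see no gap in your version.
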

\begin{proof}
We first recall that the triple $(\bar{Y}^{i,m_0},\bar{Z}^{i,m_0},\bar{K}^{i,m_0,+})_{\ig}$ is the unique solution of the system of RBSDEs associated with $(f^{i,m_0},h^i,\underline{g}_{\iplus1})_{\ig}$ where
\[ f^{i,m_0}(s,X_s^{\tx},\vec{y},z):=f^i(s,X_s^{\tx},\vec{y},z)-m_0(y^i-y^{i+1}-\bar{g}_{\iplus1}(s,X_s^{\tx}))^+.\]
In addition there exist unique deterministic continuous functions with polynomial growth $(\bar{v}^{i,m_0})_{\ig}$ such that for any $\ig, s\in[t,T]$,
\[\bar{Y}_s^{i,m_0}=\bar{v}^{i,m_0}(s,X_s^{\tx})\,\, ((\tx)\in\0TR \mbox{ is fixed}).\]
Now let us define the following processes: $\forall \ig,$ $s\in [t,T]$, 
\begin{align*}
&\tilde{U}_s^{i,m_0}:=Y_s^{i,m} \vee(Y_s^{i+1,m_0}+\overline{g}_{\iplus1}(s,X_s^{\tx}))\\
&\bar{K}_s^{i,m_0,-}:=m_0\int_0^s (Y_s^{i,m_0}-Y_s^{i+1,m_0}-\bar{g}_{\iplus1}(s,X_s^{\tx}))^+ds.
\end{align*}
Then $(\bar{Y}^{i,m_0},\bar{Z}^{i,m_0},\bar{K}^{i,m_0,+},\bar{K}^{i,m_0,-})_{\ig}$ solves the following doubly reflected BSDEs: for any $\ig$, $s\in [t,T]$,
\begin{align*}
\left\lbrace
\begin{array}{l}
\bar{Y}_s^{i,m_0}=h^i(X_T^{\tx})+\int_s^T f^i(r,X_r^{\tx},(\bar{Y}_r^{l,m_0})_{l\in\Gamma},\bar{Z}_r^{i,m_0})dr-\int_s^T\bar{Z}_r^{i,m_0}dB_r\\
\qquad\qquad\qquad\qquad+\bar{K}_T^{i,m_0,+}-\bar{K}_s^{i,m_0,+}-(\bar{K}_T^{i,m_0,-}-\bar{K}_s^{i,m_0,-});\\\\
L_s^{i,m_0}\leq \bar{Y}_s^{i,m_0} \leq \tilde{U}_s^{i,m_0}\;\\\\
\int_t^T (\bar{Y}_s^{i,m_0}-L_s^{i,m_0})d\bar{K}_s^{i,m_0,+}=0 \;\text{and}\;\int_t^T (\bar{Y}_s^{i,m_0}-\tilde{U}_s^{i,m_0})d\bar{K}_s^{i,m_0,-}=0.
\end{array}
\right.
\end{align*}
Accordingly by the results of \cite{CK96} and \cite{shjpl}, $\bar{Y}^{i,m_0}$ is also associated with a zero-sum Dynkin game as follow: 
For any $s\in [t,T]$, 
\begin{align*}
\bar{Y}_s^{i,m_0}&=\esssup_{\sigma\geq s}\essinf_{\tau\geq s}\mathbb{E}[f_s^{\sigma\wedge\tau}f^i(r,X_r^{\tx},(\bar{Y}_r^{l,m_0})_{l\in\Gamma},\bar{Z}_r^{i,m_0})dr\\
&\qq\qq\qq+L_\sigma^{i,m_0}1_{(\sigma<\tau)}+\tilde{U}_\tau^{i,m_0}1_{(\tau\leq \sigma,\t<T)}+h^i(X_T^{\tx})1_{(\tau=\sigma=T)}|\mathcal{F}_s]
\end{align*}
Next following Theorem 3.7 and Theorem 6.2 in \cite{HH05}, $\bar{v}^{i,m_0}$ is the unique solution in viscosity sense of the following PDE with obstacle: 
\begin{align*}
\left\lbrace
\begin{array}{l}
\min\lbrace w(\tx)-L^i((\bar{v}^{l,m_0})_{l\in\Gamma})(\tx);\max[w(\tx)-\tilde{U}((\bar{v}^{l,m_0})_{l\in\Gamma})(\tx);\\
\qquad -\partial_tw(\tx)-b^\top(\tx)D_xw(\tx)-\dfrac{1}{2}Tr[(\sigma\sigma^\top)(\tx)D_{xx}^2w(\tx)]\\
\qquad-f^i(\tx,(\bar{v}^{l,m_0})_{l\in\Gamma},\sigma^\top(\tx)D_xw(\tx))]\rbrace=0;\\\\
w(T,x)=h^i(X_T^{\tx})
\end{array}
\right.
\end{align*}
where $\tilde{U}((\bar{v}^{l,m_0})_{l\in\Gamma})(\tx):=\bar{v}^{i,m_0}(\tx)\vee(\bar{v}^{i+1,m_0}+\bar{g}_{\iplus1})(\tx).$\\
In other words, for any $(\tx)\in\otr$ and for any $(p,q,M)\in\bar{J}^-(\bar{v}^{i,m_0})(\tx)$, it still holds that
\begin{align}
\label{ineglimite2}\bar{v}^{i,m_0}(\tx)\geq L^i((v^{l,m_0})_{l\in\Gamma})(\tx)\end{align}
and
\begin{align}\label{vsuper1}
\begin{split}
&\max[\bar{v}^{i,m_0}(\tx)-\tilde{U}^i((\bar{v}^{l,m_0})_{l\in\Gamma})(\tx);\\
&\qquad -p-b^\top(\tx).q-\dfrac{1}{2}Tr(\sigma\sigma^\top(\tx)M)-f^i(\tx,(\bar{v}^{l,m_0})_{l\in\Gamma},\sigma^\top(\tx)q) ]\geq 0.
\end{split}
\end{align}
Next apply the inequality $a-a\vee b\leq a-b$, then \eqref{vsuper1} yields 
\begin{align*}
&\max[\bar{v}^{i,m_0}(\tx)-(\bar{v}^{i+1,m_0}+\bar{g}_{\iplus1})(\tx);\\
&\qquad -p-b^\top(\tx).q-\dfrac{1}{2}Tr(\sigma\sigma^\top(\tx)M)-f^i(\tx,(\bar{v}^{l,m_0})_{l\in\Gamma},\sigma^\top(\tx)q) ]\geq 0
\end{align*}
Hence, with \eqref{ineglimite2}, this implies that $(\bar{v}^{i,m_0})_{\ig}$ is a viscosity supersolution of \eqref{sysedpmilie1}.
\end{proof}

We are now ready to use Perron's method to provide a solution for \eqref{sysedpmilie1}. So let us consider the following functions denoted by $(\sideset{^{m_0}}{^i}{\mathop{v}})_{\ig}$ and defined as: Let 
\[ \mathcal{U}^{m_0}:=\lbrace \vec{u}:=(u^i)_{i\in\Gamma} , \vec{u} \;\text{is a subsolution of \eqref{sysedpmilie1} and for any}\; \ig, \bar{v}^i\leq u^i\leq \bar{v}^{i,m_0}  \rbrace\]
Note that $\mathcal{U}^{m_0}$ is not empty since $(\bar{v}^i)_{\ig}\in  \mathcal{U}^{m_0}.$ Next for $\ig, (\tx)\in\0TR$ we set
\[\sideset{^{m_0}}{^i}{\mathop{v}}(\tx):=\sup\lbrace{u^i(\tx)},\; (u^i)_{\ig}\in\mathcal{U}^{m_0} \rbrace.\]
We then have:  
\begin{theorem}\label{uniqv}Assume [H2],[H3] and [H5]. Then the functions $(\sideset{^{m_0}}{^i}{\mathop{v}})_{\ig}$ is the unique viscosity solution of \eqref{sysedpmilie1}. Moreover the solution does not depend on $m_0$. Finally for any $\ig$, $\sideset{^{m_0}}{^i}{\mathop{v}}=\bar v^i$. 
\end{theorem}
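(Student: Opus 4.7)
The plan is to apply Perron's method, using the building blocks already established: the subsolution $(\bar v^i)_{\ig}$ from Proposition~\ref{barv}, the supersolution $(\bar v^{i,m_0})_{\ig}$ from Proposition~\ref{vsuper}, and the comparison principle of Proposition~\ref{comp}. Since $\bar v^i\le \bar v^{i,m_0}$ by construction and $(\bar v^i)_{\ig}\in\mathcal{U}^{m_0}$, the family $(\sideset{^{m_0}}{^i}{\mathop{v}})_{\ig}$ is well defined and satisfies $\bar v^i\le \sideset{^{m_0}}{^i}{\mathop{v}}\le \bar v^{i,m_0}$ pointwise on $\0TR$. In particular, both $\sideset{^{m_0}}{^i}{\mathop{v}}$ and its lsc/usc envelopes belong to $\Pi_g$ and satisfy the terminal condition $\sideset{^{m_0}}{^i}{\mathop{v}}(T,x)=h^i(x)$.

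First I would show that $\vec{v}^*:=((\sideset{^{m_0}}{^i}{\mathop{v}})^*)_{\ig}$ is a viscosity subsolution of \eqref{sysedpmilie1}. This is the classical ``supremum of subsolutions'' part of Perron: given $(p,q,M)\in \bar J^+(\sideset{^{m_0}}{^i}{\mathop{v}})^*(\tx)$, one chooses a sequence of subsolutions $\vec u_n\in\mathcal{U}^{m_0}$ and points $(t_n,x_n)\to (\tx)$ realizing the usc limit, extracts parabolic super-jets via Lemma 6.1 of \cite{CIL92}, and passes to the limit in the subsolution inequality for each $\vec u_n$, using the monotonicity in $\vec y$ from [H5]--b) to handle the dependence on the other components $(\vec u_n^l(t_n,x_n))_{l\ne i}$. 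Then I would show that $\vec v_*:=((\sideset{^{m_0}}{^i}{\mathop{v}})_*)_{\ig}$ is a viscosity supersolution, by the classical contradiction argument: if $\vec v_*$ failed the supersolution inequality at some point $(t_0,x_0)\in [0,T)\times \R^k$, then one could perturb the $i$-th component of $\sideset{^{m_0}}{^i}{\mathop{v}}$ upward by a small bump of the form $\varepsilon\eta(t,x)$ supported in a tiny neighborhood of $(t_0,x_0)$, producing a new family in $\mathcal{U}^{m_0}$ strictly larger than $\sideset{^{m_0}}{^i}{\mathop{v}}$ at some point, contradicting its definition. The bump must be small enough to keep the perturbed $i$-th component below $\bar v^{i,m_0}$ and to keep all the obstacle inequalities $L^i(\vec v)\le v^i\le U^i(\vec v)$ satisfied; this is feasible because at $(t_0,x_0)$ the failure of the supersolution property automatically gives the strict inequalities needed to have room.

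Once $\vec v^*$ is a subsolution and $\vec v_*$ a supersolution, Proposition~\ref{comp} forces $\vec v^*\le \vec v_*$. The reverse inequality holds trivially, so $(\sideset{^{m_0}}{^i}{\mathop{v}})^*=(\sideset{^{m_0}}{^i}{\mathop{v}})_*$ componentwise, showing that $\sideset{^{m_0}}{^i}{\mathop{v}}$ is continuous and is itself a viscosity solution of \eqref{sysedpmilie1} in $\Pi_g$. Uniqueness within $\Pi_g$ is a direct application of Proposition~\ref{comp} used in both directions. In particular, since the solution does not depend on which pair (subsolution/supersolution) we use to squeeze it, and $m_0$ intervenes only through the upper envelope $\bar v^{i,m_0}$, the function $\sideset{^{m_0}}{^i}{\mathop{v}}$ is independent of $m_0$.

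For the last claim $\sideset{^{m_0}}{^i}{\mathop{v}}=\bar v^i$, the upper bound $\sideset{^{m_0}}{^i}{\mathop{v}}\le \bar v^{i,m_0}$ holds for every $m_0$ (by $m_0$-independence just obtained), so letting $m_0\to\infty$ and using that $(\bar v^{i,m})_m$ decreases to $\bar v^i$ yields $\sideset{^{m_0}}{^i}{\mathop{v}}\le \bar v^i$; the reverse inequality is automatic since $\bar v^i\in\mathcal{U}^{m_0}$. The main technical obstacle I expect is rigorously carrying out the bump construction in the supersolution step: the bump has to be localized in both time and space, compatible with the terminal condition, and must simultaneously preserve all the interconnected obstacle inequalities of the system, which requires choosing its size in terms of the strict margins $\bar v^{i,m_0}-\sideset{^{m_0}}{^i}{\mathop{v}}$ and $U^i-v^i$, $v^i-L^i$ at $(t_0,x_0)$; everything else is a routine adaptation of Perron's method to systems with interconnected bilateral obstacles.
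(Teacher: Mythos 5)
Your proposal follows essentially the same route as the paper: Perron's method squeezed between the subsolution $(\bar v^i)_{\ig}$ and the supersolution $(\bar v^{i,m_0})_{\ig}$, the comparison principle of Proposition~\ref{comp} to get continuity and uniqueness, and the bound $\bar v^i\le \sideset{^{m_0}}{^i}{\mathop{v}}\le \bar v^{i,m_0}$ together with $m_0\to\infty$ to identify the solution with $\bar v^i$. Step 1 (supremum of subsolutions, decoupling the system via the monotonicity [H5]--b) so that each component is a subsolution of a scalar equation with the other components frozen at $(v^{l,*})_{l\ne i}$) and Step 3 match the paper's argument.

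The one place where your implementation of Step 2 would run into trouble is the mechanism you propose for keeping the perturbed family inside $\mathcal{U}^{m_0}$. You suggest choosing the bump small ``in terms of the strict margins $\bar v^{i,m_0}-\sideset{^{m_0}}{^i}{\mathop{v}}$'', but this margin can vanish at $(t_0,x_0)$: nothing prevents $v_*^i(t_0,x_0)=\bar v^{i,m_0}(t_0,x_0)$, in which case the bumped function $u_{\delta,\gamma}(t_0,x_0)=v_*^i(t_0,x_0)+\delta$ exceeds $\bar v^{i,m_0}(t_0,x_0)$ no matter how small $\delta$ is. The paper avoids this entirely: it shows that $[(v^l)_{l\in\Gamma^{-i}},\tilde u^i]$ (with $\tilde u^i=\max(v^i,u_{\delta,\gamma})$ on $B_r$) is a viscosity subsolution of the full system in $\Pi_g$, and then invokes Proposition~\ref{comp} against the supersolution $(\bar v^{i,m_0})_{\ig}$ to conclude $\tilde u^i\le \bar v^{i,m_0}$, hence membership in $\mathcal{U}^{m_0}$. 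Relatedly, your concern about preserving the hard constraints $L^i(\vec v)\le v^i\le U^i(\vec v)$ is unnecessary: membership in $\mathcal{U}^{m_0}$ only requires the subsolution property of the min--max system (where the obstacle terms sit inside the $\min$/$\max$) plus the two-sided bound, not the pointwise obstacle inequalities. With the comparison-based fix, your argument coincides with the paper's.
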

\begin{proof}
It is obvious that for any ${\ig}$, the function $\sideset{^{m_0}}{^i}{\mathop{v}}$ belongs to class $\Pi_g$ since $(\bar{v}^i)_{\ig}$ and $(\bar{v}^{i,m_0})_{\ig}$ are functions of $\Pi_g$.\\
To proceed, we divide the main proof into three steps. On the other hand, to simplify the notation, we replace $(\sideset{^{m_0}}{^i}{\mathop{v}})_{\ig}$ with $(v^i)_{\ig}$ as there is no possible confusion.\\

\noindent\textit{Step 1: $(v^i)_{\ig}$ is a viscosity subsolution of \eqref{sysedpmilie1}.}\\
For any $\ig, v^i\in\mathcal{U}^{m_0}$ and then it satisfies:
\[\bar{v}^i\leq v^i\leq \bar{v}^{i,m_0}.\]
The inequalities still valid for the upper semicontinuous envelops, i.e., \[ \bar{v}^i\leq v^{i,*}\leq \bar{v}^{i,m_0}\]since $\bar v^i$ is usc and $\bar{v}^{i,m_0}$ is continuous. Therefore we have \[\bar{v}^i(T,x)= v^{i,*}(T,x)=\bar{v}^{i,m_0}(T,x)=h^i(x). \]
It means that $(v^{i,*})_{\ig}$ verify the subsolution property of system \eqref{sysedpmilieu} at time $T$. 
\ms

Next let $(\tilde{v}^k)_{\kg}$ be an arbitrary element of $\mathcal{U}^{m_0}$ and let $\ig$ be fixed. Since $(\tilde{v}^k)_{\kg}$ is a subsolution of \eqref{sysedpmilie1}, then for any $(\tx)\in\otr$ and $(p,q,M)\in\bar{J}^+\tilde{v}^{i,*}(\tx) $ we have
\begin{equation}\label{uniqv1}
\begin{array}{l}
\min\lbrace \tilde{v}^{i,*}(\tx)-L^i((\tilde{v}^{l,*})_{l\in\Gamma})(\tx);\max[ \tilde{v}^{i,*}(\tx)-U^i((\tilde{v}^{l,*})_{l\in\Gamma})(\tx);\\
-p-b^\top(\tx)q-\dfrac{1}{2}Tr(\sigma\sigma^\top(\tx)M)-f^i(\tx,(\tilde{v}^{l,*}(\tx))_{l\in\Gamma},\sigma^\top(\tx)q)]\rbrace\leq 0.
\end{array}
\end{equation}
But for any $\kg$, $ \tilde{v}^k\leq v^k$, then $\tilde{v}^{k,*}\leq v^{k,*}.$ On the other hand, we notice that the operators $(w^l)_{l\in\Gamma}\mapsto \tilde{v}^{i,*}-L^i((w^l)_{l\in\Gamma})$ and $(w^l)_{l\in\Gamma}\mapsto \tilde{v}^{i,*}-U^i((w^l)_{l\in\Gamma})$ are decreasing, then by the monotonicity of $f^i$ ([H5]) and \eqref{uniqv1} we have
\begin{equation}\label{uniqv2}
\begin{array}{l}
\min\lbrace (\tilde{v}^{i,*}-L^i((v^{l,*})_{\l\in\Gamma}))(\tx);\max[( \tilde{v}^{i,*}-U^i((v^{l,*})_{\l\in\Gamma}))(\tx);\\
-p-b^\top(\tx)q-\dfrac{1}{2}Tr(\sigma\sigma^\top(\tx)M)-f^i(\tx,[(v^{l,*}(\tx))_{l\in\Gamma^{-i}},\tilde{v}^{i,*}],\sigma^\top(\tx)q)]\rbrace\leq 0.
\end{array}
\end{equation}
It means that $\tilde{v}^i$ is a subsolution of the following PDE: \begin{align}\label{viscow}
\left\lbrace\begin{array}{l}
\min\lbrace (w-L^i((v^{l,*})_{\l\in\Gamma}))(\tx);\max[( w-U^i((v^{l,*})_{\l\in\Gamma}))(\tx);\\
\quad -p-b^\top(\tx)q-\dfrac{1}{2}Tr(\sigma\sigma^\top(\tx)M)-f^i(\tx,[(v^{l,*}(\tx))_{l\in\Gamma^{-i}},w],\sigma^\top(\tx)q)]\rbrace= 0\\
w(T,x)=h^i(x).
\end{array}
\right.
\end{align}
In addition, the following function is lsc:
\begin{align*}
&(\tx,w,p,q,M)\in[0,T]\times\mathbb{R}^{k+1+1+k}\times\mathbb{S}^k\\
&\qq\qq\longmapsto \min\lbrace w-L^i((v^{l,*})_{l\in\Gamma})(\tx);\max[w-U^i((v^{l,*})_{l\in\Gamma})(\tx);\\
&\quad\qq\qq\qq\qq-p-b^\top(\tx)q-f^i(\tx,[(v^{l,*}(\tx))_{l\in\Gamma^{-i}},w],\sigma^\top(\tx).q) ]\rbrace.
\end{align*}
As $v^i$ is the supremum of $\tilde{v}^i$, thanks to Lemma 4.2 in \cite{CIL92}, $v^i$ is a viscosity subsolution of \eqref{viscow}. But $i$ is arbitrary, then $(v^i)_{\ig}$ is a viscosity subsolution of system \eqref{sysedpmilie1}.\\

\noindent\textit{\underline{Step 2}: $(v^i)_{\ig}$ is a viscosity supersolution of \eqref{sysedpmilie1}.}\\

\nd We first focus on the terminal condition. For any $\ig,\; v_*^i(T,x)=h^i(x)$ from the inequality $\underline{v}^i=\underline v_*^i\leq \bar{v}^i_*\leq v^i_*\leq \bar{v}_*^{i,m_0}= \bar{v}^{i,m_0}$ since 
$\underline{v}^i$ is lsc and $\bar{v}^{i,m_0}$ is continuous. \\
\ms

Next by contradiction we assume that $(v^i)_{\ig}$ is not a supersolution of \eqref{sysedpmilie1}, i.e. there exists at least one $\ig$ and for some $(t_0,x_0)\in (0,T)\times \R^k$ and $(p,q,M)\in  J^-(v_*^i)(\tx)$ such that we have:
\begin{align}\label{uniqv3}
\begin{split}
&\min\lbrace v_*^i(t_0,x_0)-L^i((v_*^l)_{l\in\Gamma})(t_0,x_0);\max[ v_*^i(t_0,x_0)-U^i((v_*^l)_{l\in\Gamma})(t_0,x_0);\\
&\; -p-b^\top(t_0,x_0)q-\dfrac{1}{2}Tr(\sigma\sigma^\top(t_0,x_0)M)-f^i(t_0,x_0,(v_*^l(t_0,x_0))_{l\in\Gamma},\sigma^\top(t_0,x_0)q) ]\rbrace <0.
\end{split}
\end{align}
Next for any positive constants $\delta, \gamma$ and $r$ let us define:
\begin{align}\label{uniqv4}
\begin{split}
&u_{\delta,\gamma}(\tx):=v_*^i(t_0,x_0)+\delta+<q,x-x_0>+p(t-t_0)+\frac{1}{2}<(M-2\gamma)(x-x_0),(x-x_0)>\\
&\mbox{ and }B_r:=\lbrace (\tx)\in\0TR \;\text{such that}\; |t-t_0|+|x-x_0|<r\rbrace.
\end{split}
\end{align}
By choosing $\delta$ and $\gamma$
small enough, we deduce from \eqref{uniqv3} that
\begin{align}\label{uniqv5}
\begin{split}
&\min\lbrace v_*^i(t_0,x_0)-L^i((v_*^l)_{l\in\Gamma})(t_0,x_0)+\delta;\max[ v_*^i(t_0,x_0)-U^i((v_*^l)_{l\in\Gamma})(t_0,x_0)+\delta;\\
&\quad -p-b^\top(t_0,x_0)q-\dfrac{1}{2}Tr(\sigma\sigma^\top(t_0,x_0)(M-2\gamma))\\
&\quad -f^i(t_0,x_0,[(v_*^l(t_0,x_0))_{l\in\Gamma^{-i}},v_*^i(t_0,x_0)+\delta],\sigma^\top(t_0,x_0)q) ]\rbrace <0.
\end{split}
\end{align}
Next let us define the following function:
\begin{align*}
\Theta(\tx):=&\min\lbrace u_{\delta,\gamma}(\tx)-L^i((v_*^l)_{l\in\Gamma})(\tx);\max[u_{\delta,\gamma}(\tx)-U^i((v_*^l)_{l\in\Gamma})(\tx);\\
& \qquad-p -b^\top(\tx).q-\frac{1}{2}Tr(\sigma\sigma^\top(\tx))(M-2\gamma)\\
&\qquad -f^i(\tx,[(v_*^l(\tx))_{l\in\Gamma^{-i}},u_{\delta,\gamma}(\tx)],\sigma^\top(\tx)q)]\rbrace
\end{align*}
According to \eqref{uniqv5} we have $\Theta(t_0,x_0)<0.$ On the other hand, $\Theta$ is usc since the functions $v_*^i$, $\ig$,  are lsc, $u_{\delta,\gamma}$ is continuous and $f^i$ is continuous and verifies the monotonicity property. Therefore for any $\epsilon>0$, there is some $\eta>0$ such that for any $(\tx)\in B_\eta$ we have \[ \Theta(\tx)\leq \Theta(t_0,x_0)+\epsilon  \]
Next as $\Theta(t_0,x_0)<0,$ we can choose $\epsilon$ small enough to obtain $\Theta(\tx)\leq 0$ for any $(\tx)\in B_\eta$. Thus for any $(\tx)\in B_\eta$, $u_{\delta,\gamma}$ is nothing but a viscosity subsolution of the following PDE (on $B_\eta$):
\begin{align}\label{uniqv6}
\begin{split}
&\min\lbrace w(\tx)-L^i((v_*^l)_{l\in\Gamma})(\tx);\max[w(\tx)-U^i((v_*^l)_{l\in\Gamma})(\tx);\\
& \qquad-\partial_tw(\tx) -b^\top(\tx)D_xw(\tx)-\frac{1}{2}Tr(\sigma\sigma^\top(\tx)D_{xx}^2w(\tx))\\
&\qquad -f^i(\tx,[(v_*^l(\tx))_{l\in\Gamma^{-i}},w(\tx)],\sigma^\top(\tx)D_xw(\tx))]\rbrace=0.
\end{split}
\end{align}
As for any $\ig, v_*^i\leq v^{i,*}$, then $u_{\delta,\gamma}$ is also a viscosity subsolution of \eqref{uniqv6} by replacing $(v_*^i)_{\ig}$ with $(v^{i,*})_{\ig}$, i.e.
\begin{align*}
&\min\lbrace w(\tx)-L^i((v^{l,*})_{l\in\Gamma})(\tx);\max[w(\tx)-U^i((v^{l,*})_{l\in\Gamma})(\tx);\\
& \qquad-\partial_tw(\tx) -b^\top(\tx)D_xw(\tx)-\frac{1}{2}Tr(\sigma\sigma^\top(\tx)D_{xx}^2w(\tx))\\
&\qquad -f^i(\tx,[(v^{l,*}(\tx))_{l\in\Gamma^{-i}},w(\tx)],\sigma^\top(\tx)D_xw(\tx))]\rbrace=0.
\end{align*}
On the other hand since $(p,q,M)\in J^-(v_*^i(t_0,x_0))$, by the definition of the subjet (\cite{CIL92}) we have: $\forall \ig,$
\begin{align*}
v^i(\tx)&\geq v_*^i(\tx)\\
&\geq v_*^i(t_0,x_0)+p(t-t_0)+<q,x-x_0>+\frac{1}{2}<M(x-x_0),(x-x_0)>\\
&\qquad+o(|t-t_0|)+o(|x-x_0|^2).
\end{align*}
Next let us set $\delta=\frac{r^2}{8}\gamma$ and let us go back to the definition of $u_{\delta,\gamma}$ yielding
\begin{align*}
v^i(t,x)>u_{\delta,\gamma}(\tx)&=v_*^i(t_0,x_0)+\frac{r^2}{8}\gamma+<q,x-x_0>+p(t-t_0)+\frac{1}{2}<M(x-x_0),(x-x_0)>\\
&\quad -<\gamma(x-x_0),(x-x_0)>
\end{align*}
when $ \frac{r}{2}\le |x-x_0|\leq r$ and $r$ small enough. Next let us take $r\leq \eta$ and let us define the function $\tilde{u}^i$ by:
\begin{equation*}
\tilde{u}^i(t,x)=
\left\lbrace
\begin{array}{l}
\max(v^i(\tx),u_{\delta,\gamma}(\tx)),\quad \text{if}\; (\tx)\in B_r;\\
v^i(\tx)\mbox{ otherwise. }
\end{array}
\right.
\end{equation*}
Then according to \eqref{uniqv6} and Lemma 1.2 in \cite{CIL92}, $\tilde{u}^i$ is also a subsolution of the following PDE:
\begin{equation*}
\left\lbrace
\begin{array}{l}
\min\lbrace w(\tx)-L^i((v^{l,*})_{l\in\Gamma})(\tx);\max[w(\tx)-U^i((v^{l,*})_{l\in\Gamma})(\tx);\\\\
 \qquad-p -b^\top(\tx)q-\frac{1}{2}Tr(\sigma\sigma^\top(\tx)M)\\\\
\qquad -f^i(\tx,[(v^{l,*}(\tx))_{l\in\Gamma^{-i}},w(\tx)],\sigma^\top(\tx)q)]\rbrace=0\\\\
w(T,x)=h^i(x).
\end{array}
\right.
\end{equation*}
Once more by the monotonicity of $f^i$ and the fact that $\tilde{u}^i\geq v^i$, $[(v^l)_{l\in\Gamma^{-i}},\tilde{u}^i]$ is also a subsolution of \eqref{sysedpmilie1} which belongs to $\Pi_g$. Then by comparison we obtain that $[(v^l)_{l\in\Gamma^{-i}},\tilde{u}^i]$ belongs to $\mathcal{U}^{m_0}$. Next by the definition of $v_*^i$, we can find a sequence $(t_n,x_n,v^i(t_n,x_n))_{n\geq 1}$ which converges to $(t_0,x_0,v_*^i(t_0,x_0))$, then we have
\begin{align*}
\lim_{n\to\infty}(\tilde{u}^i-v^i)(t_n,x_n)&=\lim_{n\to\infty}(u_{\delta,\gamma}-v_*^i)(t_n,x_n)\\
&=v_*^i(t_0,x_0)+\delta-v_*^i(t_0,x_0)>0
\end{align*}
This result implies that we can find some points $(t_n,x_n)$ such that $\tilde{u}^i(t_n,x_n)>v^i(t_n,x_n)$, which is contradictory against the fact that $[(v^l)_{l\in\Gamma^{-i}},\tilde{u}^i]$ belongs to $\mathcal{U}^{m_0}$ and the definition of $(v^i)_{\ig}$.

\noindent\textit{Step 3: Continuity and uniqueness of $(v^i)_{\ig}$.}\\
Following the definition of usc envelop $(v^{i,*})_{\ig}$ (resp. lsc envelop $(v_*^i)_{\ig}$), $(v^{i,*})_{\ig}$ (resp.$(v_*^i)_{\ig}$) is a usc subsolution (resp. lsc supersolution) of \eqref{sysedpmilie1}, then by Proposition \ref{comp} we obtain $\forall \ig,$
\[v^{i,*}\leq v_*^i\]
Meanwhile it holds true that $v_*^i\leq v^i\leq v^{i,*}$ then $v_*^i=v^{i,*}$, which implies the continuity of $v^i$.

Next we assume that there exists another solution $(\hat{v}^i)_{\ig}$ of \eqref{sysedpmilie1} which belongs to class $\Pi_g.$ As $(v^i)_{\ig}$ and $(\hat{v}^i)_{\ig}$ are both subsolutions and supersolutions, by the comparison result we obtain both $v^i\leq \hat{v}^i$ and $v^i\geq \hat{v}^i$ with al $\ig$, as a result the solution is unique. The uniqueness of solution leads us directly to the fact that the solution $(v^i)_{\ig}$ does not depend on $m_0.$ Finally for any $\ig$ and $m_0$ we have 
$$
\bar v^i\leq v^i\le v^{i,m_0}.
$$Just send $m_0$ to $+\infty$ to obtain that for any $\ig$, $\bar v^i=v^i$. 
\end{proof}

\end{document}